\newcommand*{\mcal}{\mathcal}
\newcommand*{\mrm}{\mathrm}
\def\CC{\mathbbm{C}}
\def\FF{\mathbbm{F}}
\def\symp{\mathrm{Sp}}
\def\cliff{\mathcal{C}l}
\def\Id{\mathbbm{1}}
\def\id{\mathrm{id}}
\def\ii{\mathbbm{1}}
\def\rep{\mathcal{K}}
\def\eps{\varepsilon}
\def\End{\mathrm{End}}
\DeclareMathOperator{\tr}{tr}
\DeclareMathOperator{\Tr}{Tr}
\DeclareMathOperator{\rank}{rank}
\DeclareMathOperator{\supp}{supp}
\DeclareMathOperator{\Sp}{Sp}
\DeclareMathOperator{\Gl}{GL}
\DeclareMathOperator{\GL}{GL}
\DeclareMathOperator{\Hom}{Hom}
\DeclareMathOperator{\range}{range}
\DeclareMathOperator{\rad}{rad}
\newtheorem{theorem}{Theorem}[section]
\newtheorem{lemma}[theorem]{Lemma}
\newtheorem*{lemma*}{Lemma}
\newtheorem{proposition}[theorem]{Proposition}
\newtheorem*{proposition*}{Proposition}
\newtheorem{corollary}[theorem]{Corollary}
\theoremstyle{definition}
\newtheorem{definition}[theorem]{Definition}
\theoremstyle{remark}
\numberwithin{equation}{section}
\begin{document}

\title{Rank-deficient representations in the
Theta correspondence over finite fields\\
arise from quantum codes}

\author{Felipe Montealegre-Mora}
\email{fmonteal@thp.uni-koeln.de}

\author{David Gross}
\affiliation{\
Institute for Theoretical Physics, University of Cologne, 50937 Cologne, Germany}

\thanks{We thank Mateus Araujo, Markus Heinrich, Sepehr Nezami, and Michael Walter for interesting discussions.
This work has been supported by the Excellence Initiative of the German Federal and State Governments (Grant ZUK 81), the ARO under contract W911NF-14-1-0098 (Quantum Characterization, Verification, and Validation),
and the DFG (SPP1798 CoSIP, project B01 of CRC 183).}

\date{\today}

\begin{abstract}
  Let $V$ be a symplectic vector space and let $\mu$ be the \emph{oscillator representation} of $\Sp(V)$.
  It is natural to ask how the tensor power representation $\mu^{\otimes t}$ decomposes. 
  If $V$ is a real vector space, then \emph{Howe-Kashiwara-Vergne (HKV) duality} asserts that there is a one-one correspondence between the irreducible subrepresentations of $\Sp(V)$ and the irreps of an orthogonal group $O(t)$.
  It is well-known that this duality fails over finite fields.
  Addressing this situation, Gurevich and Howe have recently assigned a notion of \emph{rank} to each $\Sp(V)$ representation.
  They show that a variant of HKV duality continues to hold over finite fields, if one restricts attention to subrepresentations of maximal rank.
  The nature of the rank-deficient components was left open.
  Here, we show that all rank-deficient $\Sp(V)$-subrepresentations arise from embeddings of
  lower-order tensor products of $\mu$ and $\bar\mu$ into $\mu^{\otimes t}$.
  The embeddings live on spaces that have been studied in quantum information theory as tensor powers of \emph{self-orthogonal Calderbank-Shor-Steane (CSS) quantum codes}.
  We then find that the  irreducible $\Sp(V)$ subrepresentations of $\mu^{\otimes t}$ are labelled by
  the irreps of orthogonal groups $O(r)$ acting on certain $r$-dimensional spaces for $r\leq t$.
  The results hold in odd charachteristic and the ``stable range'' $t\leq \frac12 \dim V$.
  Our work has implications for the representation theory of the \emph{Clifford group}.
  It can be thought of as a generalization of the known characterization of the invariants of the Clifford group in terms of self-dual codes.
\end{abstract}

\maketitle

%%%%%%%%%%%%%%%%%%%%%%%%%%%%%%%%%%%%%%

\section{Introduction and summary of results}
\label{sec:summary}

The \emph{oscillator representation}  (also: \emph{Schr\"odinger}, \emph{Weil}, or \emph{metaplectic} representation) is a representation $\mu_V$ of the symplectic group $\Sp(V)$ over a symplectic vector space $V$.
It appears in many contexts, including time-frequency analysis,
coding theory,
and quantum mechanics. 

The starting point of this work is the natural question of how tensor powers $\mu_V^{\otimes t}$
decompose into irreducible representations. 

One may reformulate this problem in a more geometric and slightly more general way~\cite{gh17}.
If $U$ is an orthogonal space, then $U\otimes V$ is again symplectic.
The tensor power $\mu_V^{\otimes t}$ is isomorphic to $\mu_{U\otimes V}$ for a suitable $t$-dimensional space $U$ (Corollary~\ref{cor:full factorization}).
The symmetry group $O(U)\times \Sp(V)$ associated with the tensor factors embeds into $\Sp(U\otimes V)$.
Clearly, the restriction of $\mu_{U\otimes V}$ to $O(U)$ commutes with the restriction to $\Sp(V)$.
One can thus decompose the representation into a direct sum
\begin{align}\label{eqn:Theta}
  \mu_{U\otimes V} \cong \bigoplus_{\tau \in\mrm{Irr}(O(U))} \tau\otimes\Theta(\tau),
\end{align}
where $\tau$ ranges over irreps of $O(U)$, and $\Theta(\tau)$ is a representation of $\Sp(V)$.
If $V$ is a real vector space, then \emph{HKV duality} asserts that $\Theta(\tau)$ is again 
irreducible, and that the correspondence $\Theta$ between representations is 
injective~\cite{howe89,kashiwaravergne78}.
Over finite fields, the correspondence fails: $\Theta(\tau)$ is in general no longer irreducible, 
and equivalent $\Sp(V)$ representations might appear in $\Theta(\tau)$ for different $\tau$'s.
Our goal is to understand this situation better.

The main part of this paper is presented in the basis-free notation set out in Ref.~\cite{gh17}.
For ease of exposition, we will use more concrete (and slightly less general) constructions in this introductory section.
From now on, we assume that $V=\FF_q^{2n}$ is $2n$-dimensional over a finite field $\FF_q$ of odd characteristic, and endowed with a symplectic form.

Reference~\cite{gh17} introduces a notion of \emph{rank} for $\Sp(V)$ representations.
To describe it, recall  that the oscillator representation of $\Sp(V)$ can be realized over the Hilbert space $\mathcal{H}=\CC[\FF_q^{n}]$ of complex linear combinations of basis vectors $\delta_x$ labeled by vectors $x\in\FF_{q}^n$ (Sec.~\ref{sec:background}).
Given $x_1, \dots, x_t \in \FF_q^n$, we may arrange these vectors as the rows of a $t\times n$ matrix $F$.
We then obtain an isormorphism
\begin{align}\label{eqn:to matrices}
  \mathcal{H}^{\otimes t} = \CC[\FF_q^n]^{\otimes t} \simeq \CC[\FF_q^{t \times n}]
\end{align}
via the identification
\begin{align*}
  \delta_{x_1}\otimes \dots \otimes \delta_{x_t} \simeq \delta_F.
\end{align*}
The \emph{rank} of an element $\psi\in\mathcal{H}^{\otimes t}$ and of a subspace $\mathcal{K}\subset\mathcal{H}^{\otimes t}$ are defined as, respectively,
\begin{align*}
  \rank\psi = \sup\,\{ \rank F^T F \;|\; (\delta_F, \psi) \neq 0 \},
  \qquad
  \rank\mathcal{K}= \sup\,\{ \rank \psi \;|\; \psi \in \mathcal{K} \}.
\end{align*}
The central result of Ref.~\cite{gh17} is this:

\begin{theorem}[\cite{gh17}]\label{thm:gurevich}
  Assume $t\leq n$.  
  Then $\Theta(\tau)$ contains a unique irreducible representation $\eta(\tau)$ of rank $t$.
  The function $\eta$ defines an injective map from the irreducible representations of $O(U)$ to the irreducible rank-$t$ subrepresentations of $\Sp(V)$ in $\mu_{U\otimes V}$.
\end{theorem}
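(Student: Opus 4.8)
The plan is to use the rank stratification of $\mathcal{H}^{\otimes t}$ to reduce the statement to a decomposition problem on a single generic stratum, and then to resolve that by the orbit method. First I would check that $\mathcal{H}^{\otimes t}_{\leq k}:=\{\psi:\rank\psi\leq k\}$ is a subrepresentation for both $\Sp(V)$ and $O(U)$. The clean route is an intrinsic description of $\rank$: identifying $\mathcal{H}^{\otimes t}$ with the oscillator model $\mathcal{H}_{U\otimes V}$ of the Heisenberg group of $U\otimes V$ and writing $U\otimes V\cong\Hom(U^*,V)$, one shows --- this is essentially the basis-independence of $\rank$ --- that $\rank\psi$ is the largest rank of a homomorphism occurring in the support of the autocorrelation $\phi\mapsto\langle\psi,\mu(h_\phi)\psi\rangle$ on $\Hom(U^*,V)$. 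Since $\Sp(V)$ acts there by postcomposition and $O(U)$ by precomposition, and each preserves the rank of a homomorphism, the filtration is $O(U)\times\Sp(V)$-invariant. It therefore restricts to each $O(U)$-isotypic component, so every $\Theta(\tau)$ acquires an $\Sp(V)$-stable filtration, and it suffices to control the top graded piece $\mathcal{H}_{(t)}:=\mathcal{H}^{\otimes t}_{\leq t}/\mathcal{H}^{\otimes t}_{\leq t-1}$.

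The heart of the argument is to decompose $\mathcal{H}_{(t)}$ as an $O(U)\times\Sp(V)$-representation. Because $t\leq n$, a homomorphism of maximal rank $t$ is injective, so $\mathcal{H}_{(t)}$ is built from functions on the variety of injections $U^*\hookrightarrow V$, on which $O(U)\times\Sp(V)$ acts. I would enumerate the $\Sp(V)$-orbits here --- by Witt's extension theorem these are classified by the pulled-back alternating form $\phi^*\omega_V$ on $U^*$ --- and track the residual $O(U)$-action on this finite orbit set. For a representative $\phi$ with image $W$, the stabilizer of $\phi$ in $\Sp(V)$ is, modulo a unipotent normal subgroup, the symplectic group $\Sp(W_0)$ of a residual nondegenerate space $W_0$ distilled from $W^\perp$, and --- crucially --- the restriction of $\mu_{U\otimes V}$ to it is again a twist of the oscillator representation of $\Sp(W_0)$. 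Mackey theory then writes $\mathcal{H}_{(t)}$ as a sum of representations induced from these stabilizers; a Mackey irreducibility criterion, with the Stone--von Neumann theorem controlling the residual oscillator factor, forces the resulting $\Sp(V)$-representations to be irreducible; and assembling the pieces $O(U)$-equivariantly --- using that inequivalent $\tau$ are fed by distinct Mackey data --- yields $\mathcal{H}_{(t)}\cong\bigoplus_{\tau}\tau\otimes\eta(\tau)$ with the $\eta(\tau)$ irreducible and pairwise nonisomorphic. (Equivalently: the image of $\CC[O(U)]$ in $\End_{\Sp(V)}(\mathcal{H}_{(t)})$ is the full commutant, a double-commutant form of Howe duality restricted to the generic stratum.)

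To finish: the equivariant surjection $\mathcal{H}^{\otimes t}_{\leq t}\onto\mathcal{H}_{(t)}$ restricts on the $\tau$-component to $\Theta(\tau)\onto\eta(\tau)$, so by semisimplicity of complex representations of the finite group $\Sp(V)$ the representation $\Theta(\tau)$ contains $\eta(\tau)$, which has rank $t$ as it survives to the top stratum; this gives existence. For uniqueness, if $S\subseteq\Theta(\tau)$ is irreducible of rank $t$ then $S\not\subseteq\mathcal{H}^{\otimes t}_{\leq t-1}$, so $S$ maps nonzero --- hence injectively --- into the $\tau$-component of $\mathcal{H}_{(t)}$, which is a single copy of $\eta(\tau)$; thus $S\cong\eta(\tau)$, so $\eta(\tau)$ is, up to isomorphism, the unique rank-$t$ irreducible constituent of $\Theta(\tau)$. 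Injectivity of $\eta$ is immediate: $\eta(\tau)\cong\eta(\tau')$ forces $\tau\cong\tau'$ by the pairwise inequivalence established above. I expect the main obstacle to be the middle step: pinning down the residual symplectic geometry when $\phi^*\omega_V$ is degenerate, and, above all, showing that the quadratic form on $U$ organizes the $O(U)$-orbits and the attached Mackey data so that the summands of $\mathcal{H}_{(t)}$ are labelled exactly by $\mathrm{Irr}(O(U))$ with each $\eta(\tau)$ irreducible. The hypothesis $t\leq n$ enters precisely in making ``maximal rank'' mean ``injective'' and in keeping $W_0$ large enough for the Stone--von Neumann step to apply.
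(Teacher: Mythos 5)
This theorem is not proved in the paper at all: it is quoted from Gurevich--Howe \cite{gh17}, so there is no internal proof to compare against, and your attempt has to stand on its own. As written it does not. The first genuine gap is the claimed $O(U)\times\Sp(V)$-invariant filtration $\mathcal{H}^{\otimes t}_{\leq k}$. With the paper's definition of rank (support in the $\delta_F$-basis, measured by $\rank F^TF$), the span of the vectors of rank $\leq k$ is invariant only under the Siegel parabolic, not under $\Sp(V)$: a Fourier transform $\mu(J_B)$ maps $\delta_0$ (rank $0$) to the uniform superposition over all $F$, whose support contains full-rank elements. Your proposed fix via the autocorrelation $\phi\mapsto\langle\psi,W(\phi)\psi\rangle$ does not repair this, for two reasons: the condition that this vanish on all $\phi\in\Hom(U^*,V)$ of rank $>k$ is quadratic in $\psi$, so the set it cuts out is not a linear subspace and cannot serve as a filtration by subrepresentations; and it is not equivalent to the paper's rank anyway --- already $\delta_0$ has autocorrelation support $\{(Z,0):Z\in\Hom(X^*\to U)\}$, which contains rank-$t$ elements when $t\leq n$, while its rank in the sense of \cite{gh17} is $0$. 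So the ``basis-independence of rank'' you invoke is false in the form you need, and the $\Sp(V)$-stable filtration on which every later step rests is not established.

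The second gap is the description of the top graded piece. Identifying $\mathcal{H}_{(t)}$ with functions on the variety of injections $U^*\hookrightarrow V$ conflates the representation space $L^2(\Hom(X\to U))$, of dimension $q^{nt}$, with functions on the phase space $U\otimes V\cong\Hom(U^*,V)$, of dimension $q^{2nt}$; moreover, maximal rank in the relevant ($\mathcal{N}$-weight) sense means the pulled-back quadratic form $F^TF$ is nondegenerate, not that some homomorphism is injective. The correct ``generic stratum'' is the span of the $\delta_F$ with $\rank F^TF=t$; it is stable only under $O(U)$ times the Siegel parabolic, the useful transitivity statement is that $O(U)$ acts freely and transitively on each fiber $\{F: F^TF=B\}$ with $B$ of rank $t$ (Witt's theorem, using $t\leq n$), and the Mackey-type analysis has to be carried out for the parabolic and then transferred back to $\Sp(V)$ --- this transfer, together with the resulting multiplicity-one statement, is precisely the hard content of Gurevich--Howe's theorem. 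In your outline this is exactly the step you defer to an unproven ``Mackey irreducibility criterion'' and yourself flag as the main obstacle; but both the uniqueness of $\eta(\tau)$ inside $\Theta(\tau)$ and the injectivity of $\eta$ are consequences of that unproven claim, so the proposal is a plausible strategy sketch rather than a proof.
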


The purpose of this work is to understand the \emph{rank-deficient} $\Sp(V)$-subrepresentations of $\mu_{U\otimes V}$, i.e.\ those of  that have rank $r<t$.
Key to this are \emph{self-orthogonal Calderbank-Shor-Steane (CSS) quantum codes}
\cite{steane1996error,calderbank1996good,steane1996multiple},
which are studied in the theory of quantum error correction \cite{nielsen-chuang}.
For now, we will take $U=\FF_q^t$ with the standard orthogonal form $\beta(u,v)=\sum_{i=1}^t u_i v_i$.
Let $N$ be an \emph{isotropic subspace} of $U$, i.e.\ such that $N\subset N^\perp$.
To each coset $[u]=u+N\subset U$ of $N$, one associates the \emph{coset state}
\begin{align*}
  e_{[u]} = \sum_{v\in [u]} \delta_{v} \in \CC[\FF_q^t].
\end{align*}
Analogous to the construction in Eq.~\eqref{eqn:to matrices}, we identify
\begin{align*}
  \CC[\FF_q^t]^{\otimes n} \simeq \CC[\FF_q^{t \times n}],
  \qquad
  \delta_{u_1}\otimes \dots \otimes \delta_{u_n} \simeq \delta_F,
\end{align*}
where $F$ is now the matrix whose \emph{columns} are given by the $u_1, \dots, u_t \in \FF_q^n$.
The \emph{tensor power CSS code} $C_N$ associated with $N$ is the space with basis
\begin{align}\label{eqn:coset powers}
  \left\{ e_{[u_1]} \otimes \dots \otimes e_{[u_n]}  \;\big|\; [u_i] \in N^\perp / N \right\},
\end{align}
the set of products of coset states corresponding to the elements of the quotient space $N^\perp /N$.

\begin{figure}
	\includegraphics{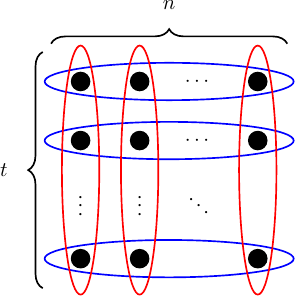} %[width=.25\linewidth]
\caption{
\label{fig:commuting action}
Sketch of the commuting actions of the Weil representation and tensor-power CSS codes. 
The each tensor factor in the representation $\mu_V^{\otimes t}(S)$ for an arbitrary 
$S$ acts on a row (highlighted in blue). The code projector is an $n$-th tensor power 
of a projector supported on a column (red).
}
\end{figure}

The codes $C_N$ can be shown to be invariant subspaces of $\mu_{U\otimes V}|_{\Sp(V)}$.
What is more, we will show:

\begin{lemma*}[Lemma~\ref{lem:css representation}, simplified version]
  As a representation of $\Sp(V)$, the restriction of $\mu_{U\otimes V}$ to a tensor power CSS code $C_N$ is isomorphic to $\mu_{U' \otimes V}$, where $U'=N^\perp / N$.
\end{lemma*}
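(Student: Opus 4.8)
The plan is to realize $C_N$ as the joint fixed space of an abelian subgroup of the Heisenberg--Weyl group $\hw(U\otimes V)$ lifting an isotropic subspace (a ``stabilizer code space'' in the language of quantum error correction), to check that this subgroup is normalized by $\Sp(V)$, and then to identify the resulting representation by the Stone--von Neumann theorem together with the rigidity of the oscillator representation in odd characteristic.

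\emph{Setup and the code space.} Fix a polarization $V=X\oplus Y$ into transverse Lagrangians and pass to the Schr\"odinger model $\mathcal H_{U\otimes V}=\CC[U\otimes X]$. Since $U\otimes X=\bigoplus_j U\otimes x_j$, identifying the $U\otimes x_j$-component of a tensor with the $j$-th column recovers precisely the ``columns'' identification $\CC[\FF_q^t]^{\otimes n}\simeq\CC[\FF_q^{t\times n}]$ of the statement, so $C_N$ sits inside $\mathcal H_{U\otimes V}$ verbatim. Set $I:=N\otimes V\subseteq U\otimes V$. Because $\beta|_{N\times N}=0$, the symplectic form of $U\otimes V$ vanishes on every pair $N\otimes W_1$, $N\otimes W_2$; in particular $I$ is isotropic, $I^\perp=N^\perp\otimes V$, and $I^\perp/I\cong(N^\perp/N)\otimes V=U'\otimes V$ with the induced symplectic form. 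Let $A\le\hw(U\otimes V)$ be the image of $I$ under the section $\rho_0$ that restricts to the honest translation (resp.\ modulation) representation on $U\otimes X$ (resp.\ $U\otimes Y$) and satisfies $\rho_0(x+y)=\rho_0(x)\rho_0(y)$. The vanishing of the form on $I$ makes these translations and modulations commute, so $\rho_0|_I$ is a homomorphism and $A$ is a genuine abelian lift of $I$ meeting the center $Z$ trivially. A direct computation in $\CC[\FF_q^{t\times n}]$ then shows that invariance under the modulations in $A$ forces support on matrices all of whose columns lie in $N^\perp$, while invariance under the translations in $A$ forces constancy along cosets obtained by adding $N$-vectors to columns; hence $\mathcal H_{U\otimes V}^{\,A}=C_N$.

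\emph{$\Sp(V)$-invariance and identification of the representation.} For $g\in\Sp(V)$, acting as $\id_U\otimes g$, one has $\mu_{U\otimes V}(g)\,\rho_0(w)\,\mu_{U\otimes V}(g)^{-1}=\chi(c(g,w))\,\rho_0(gw)$ with $c(g,w)=\tfrac12(\langle x_w,y_w\rangle-\langle x_{gw},y_{gw}\rangle)$, the defect between the Schr\"odinger section and the symmetric (Weyl) section. For $w\in I$ one has $gw\in I$, and both inner products vanish by the same isotropy computation as above, so $c(g,w)=0$ and $\mu_{U\otimes V}(g)\,A\,\mu_{U\otimes V}(g)^{-1}=\rho_0(gI)=\rho_0(I)=A$; hence $\mu_{U\otimes V}(g)$ carries $\mathcal H^A$ onto $\mathcal H^{gAg^{-1}}=\mathcal H^A$, i.e.\ $C_N$ is $\Sp(V)$-invariant. (Equivalently: $gAg^{-1}$ is a lift of $I$ differing from $A$ by a class in $\Hom(I,Z)\cong I^*$, on which $\Sp(V)$ acts through $V^*\cong V$; since $H^1(\Sp(V),V)=0$ in odd characteristic there is a unique $\Sp(V)$-stable lift, which the cocycle computation pins down as $A$.) To identify the representation, note that $A$ is central in the preimage of $I^\perp$, so that preimage modulo $A$ is isomorphic to $\hw(I^\perp/I)=\hw(U'\otimes V)$; thus $C_N=\mathcal H^A$ is an $\hw(U'\otimes V)$-module with the same nontrivial central character as $\mathcal H_{U\otimes V}$. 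Since $\dim C_N=|N^\perp/N|^n=q^{\,n\dim U'}=\dim\mathcal H_{U'\otimes V}$, Stone--von Neumann gives $C_N\cong\mathcal H_{U'\otimes V}$ as $\hw(U'\otimes V)$-modules. Under this isomorphism the $\Sp(V)$-action inherited from $\mu_{U\otimes V}$ intertwines the $\hw(U'\otimes V)$-action exactly as $\mu_{U'\otimes V}|_{\id_{U'}\otimes\Sp(V)}$ does, because $\Sp(V)$ acts on $I^\perp/I=U'\otimes V$ by $\id_{U'}\otimes g$; by the rigidity of the oscillator representation (the extension of a Heisenberg--Weyl representation to $\Sp$ is unique) the two $\Sp(V)$-actions coincide, which is the assertion.

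\emph{Main obstacle.} The crux is the $\Sp(V)$-invariance of $C_N$: the subgroup $A$ is built from a polarization of $V$ that $\Sp(V)$ does not preserve, and it is precisely the isotropy of $N$ — which makes the relevant part of the Weyl cocycle quadratic in the $N$-variable, hence identically zero — that forces $A$ to be the (unique) $\Sp(V)$-stable lift of $N\otimes V$. Everything else is Stone--von Neumann bookkeeping together with the standard rigidity of the oscillator representation. A more hands-on alternative would be to verify $\Sp(V)$-invariance on a generating set of transvections and to compare the two $\Sp(V)$-actions generator by generator, but the cocycle argument is cleaner.
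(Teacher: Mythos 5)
Your argument is correct in substance, but it follows a genuinely different route from the paper. The paper's proof is constructive: it completes a basis of $N$ to hyperbolic planes, writes $U=\mathbb{H}_1\oplus\dots\oplus\mathbb{H}_k\oplus U'$, applies the factorization $\mu_{U\otimes V}\simeq\mu_{H\otimes V}\otimes\mu_{U'\otimes V}$ of Corollary~\ref{cor:factorization U}, and then uses Lemma~\ref{lem:hyperbolic plane} (the hyperbolic factor is the permutation/adjoint representation, which fixes the vector $\sum_{F\in\Hom(X\to N)}\delta_F$) to exhibit $C_N\simeq\big(\sum_F\delta_F\big)\otimes L^2(\Hom(X\to U'))$ explicitly; this also delivers, in the same breath, the discriminant formula and the statement about $O_N$ acting as $O(U')$, which the full Lemma~\ref{lem:css representation} needs but the simplified statement does not. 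You instead realize $C_N$ as the fixed space of the abelian lift of the isotropic space $N\otimes V\subset U\otimes V$ (the stabilizer-code picture), check $\Sp(V)$-stability by a cocycle/isotropy computation, and identify the quotient Heisenberg action via Stone--von Neumann plus a dimension count, then pin down the $\Sp(V)$-action by rigidity. Your route is more intrinsic (no choice of hyperbolic complement or basis of $N$) and makes the quantum-error-correction interpretation transparent; the paper's route gives a concrete intertwiner and the extra orthogonal-group data. Both ultimately lean on the Heisenberg group, since the paper's Lemma~\ref{lem:hyperbolic plane} is itself proved via the adjoint action on Weyl operators.

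One step you should shore up: ``the extension of a Heisenberg--Weyl representation to $\Sp$ is unique'' is, as stated, only true up to a linear character of $\Sp(V)$ (Schur gives uniqueness of the projective action; linearizations can differ by a character, and $\Sp_2(\FF_3)$ does have nontrivial characters). So your two $\Sp(V)$-actions on $C_N$ a priori agree only up to a character $\kappa$. This is easily repaired exactly as the paper does in Appendix~\ref{app:proofs} for Lemma~\ref{lem:factorization general}: $\kappa$ must be trivial on the unipotent radical $\mathcal{N}$ (its weight is fixed by $\mathcal{D}$-conjugation), and $\Sp(V)$ is generated by conjugates of $\mathcal{N}$, hence $\kappa=1$. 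Also, your Schr\"odinger-section cocycle computation is fine, but note you could avoid it entirely by taking $A=\{W_{U\otimes V}(0,w)\,|\,w\in N\otimes V\}$ with the symmetric (Weyl) section of Eq.~(\ref{eqn:weyl operators}): isotropy makes this a subgroup on the nose, and Eq.~(\ref{eqn:symplectic action}) gives $\Sp(V)$-stability with no phase defect to track.
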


Figure~\ref{fig:commuting action} displays graphically the commuting actions of the projector
$P_N$ onto an arbitrary CSS code $C_N$ and $\mu_V^{\otimes t}(S)$ for an arbitrary $S$. 
There, we identify
\begin{align}
  \label{eqn:full factorization intro}
  \CC[\FF_q]^{\otimes nt}\simeq \CC[\FF_q^{t\times n}],
\end{align}
in an analogous way as above. Each dot in the diagram corresponds to a $\CC[\FF_q]$ factor
in the left-hand side of~\eqref{eqn:full factorization intro}. The tensor factors in the 
Weil representation 
$\mu_V^{\otimes t}$ act row-wise, highlighted in blue, whereas the projector $P_N$ acts 
column-wise, highlighted in red.

(We note that in odd characteristic, there are two inequivalent orthogonal geometries in each dimension.
They are distinguished by their \emph{discriminant}, the square class of the determinant of the Gram matrix of any basis.
So far, we have only considered the standard orthogonal form on $U=\FF_q^t$.
It turns out that $U'=N^\perp/N$ inherits an orthogonal form from $U$ -- however, it need not be equivalent to the standard one.
We will deal with this more general situation in the main part.)

The lemma immediately implies that non-trivial CSS codes carry rank-deficient representations of the symplectic group.
Our main result is that this construction is exhaustive.

\begin{theorem}[Main Theorem]\label{thm:main}
    Assume that $t\leq n$ and let $\rep$ be an $\Sp(V)$-subrepresentation of $\mu_{U\otimes V}$ of rank $r$.
	Then
	$(t-r)$ is even and $\mathcal{K}$ is contained in the span of all tensor power CSS codes $C_N$ with $\dim N=(t-r)/2$.
\end{theorem}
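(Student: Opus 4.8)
For $0\le k\le\lfloor t/2\rfloor$ write $\mathcal{W}_k\subseteq\mu_{U\otimes V}$ for the sum of all tensor power CSS codes $C_N$ with $N$ isotropic and $\dim N=k$; by Lemma~\ref{lem:css representation} each $\mathcal{W}_k$ is an $\Sp(V)$-subrepresentation and $\mu_{U\otimes V}=\mathcal{W}_0\supseteq\mathcal{W}_1\supseteq\cdots$. In the model $\mu_{U\otimes V}\cong\CC[\FF_q^{t\times n}]$ a routine computation gives $\rank\delta_M=\rank(M^TM)=\rank M-\dim\rad(\beta|_S)$, with $S$ the span of the columns of $M$; when those columns lie in $N^\perp$ this is the rank of the nondegenerate form on $N^\perp/N$ restricted to the image of $S$, hence $\le t-2k$, so $\rank\mathcal{W}_k\le t-2k$. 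The plan is to prove the single statement ($\star$): \emph{for every $t$-dimensional orthogonal space $U$, every $\Sp(V)$-subrepresentation $\mathcal{K}\subseteq\mu_{U\otimes V}$ with $\rank\mathcal{K}\le s$ lies in $\mathcal{W}_{\lceil(t-s)/2\rceil}$} (all geometries must be allowed, since $N^\perp/N$ need not be standard). Granting ($\star$): if $\rank\mathcal{K}=r$ then $\mathcal{K}\subseteq\mathcal{W}_{\lceil(t-r)/2\rceil}$, which has rank $\le t-2\lceil(t-r)/2\rceil$; were $t-r$ odd this is $r-1<r$, a contradiction, so $t-r$ is even and $\mathcal{K}\subseteq\mathcal{W}_{(t-r)/2}$ --- the theorem.

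I will prove ($\star$) by induction on $t=\dim U$, the cases $t\le 2$ (where the substantive point is that $\mu_{U\otimes V}$ has no rank-deficient subrepresentation for anisotropic $U$) being handled directly. Fix $\mathcal{K}$ with $\rank\mathcal{K}\le r<t$. The induction step splits into two parts. \emph{Part (a):} show $\mathcal{K}\subseteq\mathcal{W}_1=\sum_u C_{\langle u\rangle}$, the sum over isotropic lines. \emph{Part (b), the reassembly,} goes through cleanly: for an isotropic line $\langle u\rangle$ the orthogonal projection $\pi_u\colon\mu_{U\otimes V}\to C_{\langle u\rangle}$ is $\Sp(V)$-equivariant, because $C_{\langle u\rangle}$ is an invariant subspace of the unitary representation $\mu_V^{\otimes t}$; and $\pi_u$ does not raise rank, since it restricts a function to matrices whose columns lie in $u^\perp$ and then averages over adding columns from $\langle u\rangle$, and $(M+\nu)^T(M+\nu)=M^TM$ whenever the columns of $M$ lie in $u^\perp$ and those of $\nu$ in $\langle u\rangle$ (here $\langle u\rangle$ is isotropic and $\langle u\rangle\subseteq u^\perp$, so the cross terms vanish). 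Hence $\pi_u(\mathcal{K})$ is an $\Sp(V)$-subrepresentation of $C_{\langle u\rangle}\cong\mu_{U'_u\otimes V}$, with $U'_u:=u^\perp/\langle u\rangle$ and $\dim U'_u=t-2$, of rank $\le r$ (the isomorphism of Lemma~\ref{lem:css representation} being rank-compatible by construction); the inductive hypothesis gives $\pi_u(\mathcal{K})\subseteq\mathcal{W}^{(U'_u)}_{k-1}$ with $k=\lceil(t-r)/2\rceil$; and since a CSS code of $\mu_{U'_u\otimes V}$ attached to an isotropic $N'\subseteq U'_u$ is, under $C_{\langle u\rangle}\hookrightarrow\mu_{U\otimes V}$, the CSS code attached to the isotropic preimage $\tilde N'\subseteq u^\perp$ of $N'$ (of dimension $\dim N'+1$), we get $\mathcal{W}^{(U'_u)}_{k-1}\subseteq\mathcal{W}^{(U)}_k$. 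Finally $\mathcal{K}\subseteq\sum_u\pi_u(\mathcal{K})$: an $\eta\in\mathcal{K}$ orthogonal to all the $\pi_u(\mathcal{K})$ has $0=\langle\eta,\pi_u(\eta)\rangle=\|\pi_u(\eta)\|^2$ for every $u$, so $\eta\perp\sum_u C_{\langle u\rangle}=\mathcal{W}_1$, while $\eta\in\mathcal{K}\subseteq\mathcal{W}_1$ by Part (a), forcing $\eta=0$. Thus $\mathcal{K}\subseteq\sum_u\pi_u(\mathcal{K})\subseteq\mathcal{W}^{(U)}_k$, closing the induction.

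The crux is Part (a), and I expect it to require a genuinely new input. Knowing only $\rank\mathcal{K}\le t-1$ puts $\mathcal{K}$ inside $\sum_{0\ne a\in U}\CC[(a^\perp)^{\oplus n}]$, a sum over \emph{all} hyperplanes; to descend to $\mathcal{W}_1$, which uses only the isotropic $a$'s, and to replace each (non-invariant) $\CC[(a^\perp)^{\oplus n}]$ by the CSS code it contains, the projection trick of Part (b) does not apply. I expect this either from Fourier analysis on $\FF_q^{t\times n}$ combined with the explicit formulas for the Weil generators, or, more in the spirit of Ref.~\cite{gh17}, from Theorem~\ref{thm:gurevich}: the largest rank-$\le(t-1)$ subrepresentation is the orthocomplement of the span $\bigoplus_\tau\tau\otimes\eta(\tau)$ of the top-rank pieces, and one would identify the remaining part $\bigoplus_\tau\tau\otimes(\Theta(\tau)\ominus\eta(\tau))$ with $\mathcal{W}_1$ by recognizing the rank-deficient constituents of the theta lifts $\Theta(\tau)$ as theta lifts attached to the smaller spaces $U'_u$. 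Throughout, the orthogonal geometry must be tracked carefully --- the discriminant of $U'_u$ differs from that of $U$ by the sign $(-1)^{\dim N}$ --- and it is precisely the non-existence of large enough isotropic subspaces in the ``wrong'' geometry that makes $\mathcal{W}_{(t-r)/2}$ vanish, and hence the corresponding rank impossible, in the boundary cases (for instance $\mu_V^{\otimes 2}$ has no invariant vector when $q\equiv 3\bmod 4$).
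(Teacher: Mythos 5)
The gap is exactly where you flag it: Part (a) is the theorem, and you do not prove it. Showing that rank deficiency alone forces a vector into the span of tensor power CSS codes -- even in your weakened form $\mathcal{K}\subseteq\mathcal{W}_1$ -- is the entire analytic content of the statement, and neither of your two suggested routes is developed. The Fourier route is in fact the one the paper takes, but it requires concrete machinery you don't supply: one must decompose $X=\ker F\oplus X_2$ for an $F$ of maximal rank in $\supp\Phi$, apply a partial Fourier transform $\mu_{U\otimes V_1}(J_B)\otimes\Id$, and use the support/invariance duality (the paper's Lemma~\ref{lem:ft invariance lemma}) to convert the rank bound into translation invariance of $\Phi$ along $\Hom(X\to N_F)$ with $N_F=\range F\cap(\range F)^\perp$ (Lemma~\ref{lem:genesis}); one then needs a non-obvious extension of these ``local'' invariances to entire cosets on the sets $B_N=\{F: \rank F^TF=r,\ N_F=N\}$, which the paper obtains by a probabilistic choice of a midpoint $G$ (Lemma~\ref{lem:trolling}); and finally one must show that after subtracting the coset-state components the residual vector vanishes, which again needs a Fourier/uncertainty argument producing a higher-rank element in the support. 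Your alternative route via Theorem~\ref{thm:gurevich} is circular as sketched: identifying $\mathcal{W}_1^\perp$ with $\bigoplus_\tau\tau\otimes\eta(\tau)$, i.e.\ recognizing the rank-deficient constituents of the $\Theta(\tau)$ as lower-dimensional theta lifts, is precisely the decomposition~(\ref{eqn:nice}) that the paper derives \emph{from} the Main Theorem, not an independent input.

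Two further points. Your base cases $t\le 2$ are also asserted rather than proved: that an anisotropic plane admits no rank-deficient subrepresentation is a genuine (if small) claim needing an argument of the same Fourier type. On the other hand, your Part (b) reassembly is sound in outline and is a genuinely different organization from the paper: the paper does not induct on $\dim U$ at all, but works directly at the given rank $r$, reconstructing membership in the codes $C_N$ with $\dim N=(t-r)/2$ in a single pass; your observation that the equivariant projections $\pi_u$ onto the line codes $C_{\langle u\rangle}$ do not raise rank, and that CSS codes of $u^\perp/\langle u\rangle$ pull back to CSS codes of $U$, would be a clean way to bootstrap from the codimension-one statement if you could prove it. But as it stands the proposal reduces the theorem to an unproved claim of essentially the same depth, so it is not a proof.
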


The result allows us to give an explicit decomposition of $\mu_{U\otimes V}$ in terms of irreducible and inequivalent $\Sp(V)$ representation spaces.
Indeed, we find (Sec.~\ref{sec:eta}) that as an $O(U)\times \Sp(V)$ representation:
\begin{align}\label{eqn:nice intro}
  \mu_{U\otimes V} \simeq
  \bigoplus_{r \in R(U)}\quad \bigoplus_{\tau \in\mrm{Irr}\,O(U_r)}\quad
  \mathrm{Ind}_{O_r}^{O(U)}(\tau)
  \otimes
  \eta(\tau).
\end{align}
We have used the following expressions:
$R(U)$ is the set $\{ t -2k \}_k$, where $k$ ranges from $0$ to the dimension of the largest 
isotropic subspace in $U$ (its \emph{isotropy index}).
For each $k$, we choose some istropic subspace $N\subset U$ of dimension $k$ and set $U_r=N^\perp/N$.
Then $U_r$ is an orthogonal space of dimension $r=t-2k$ and discriminant $d(U_r)=(-1)^k d(U)$.
Let $O_r:=O_N\subset O(U)$ be the stabilizer of $N$.
Notice that because of a lemma proven by Witt, the group 
$O_r$ is independent of the choice of $N$, up to isomorphism. 
This justifies surpressing $N$ in our notation.
The group $O_r$ acts on $U_r$ as $O(U_r)$.

Thus any $\tau\in\mathrm{Irr}\,O(U_r)$ can be interpreted as an $O_r$-representation, and the induced representation in Eq.~\eqref{eqn:nice intro} is hence well-defined.
All $\Sp(V)$-irreps $\eta(\tau)$ appearing in Eq.~\eqref{eqn:nice intro} are indeed inequivalent: Those corresponding to different $O(U_r)$ are distinguished by their rank, whereas the inequivalence of summands of the same rank is a consequence of Theorem~\ref{thm:gurevich}.

It is natural to ask whether the assumption that $t\leq n$ is necessary.
We show that some constraints on $t, n$ are indeed required, by explicitly constructing rank-$0$ subrepresentations for $t=3, n=1$ that do not come from CSS codes (Section~\ref{sec:counterexample}).

Our work was motivated by recent related observations on tensor powers of the \emph{Clifford group} \cite{zhu2016clifford,helsen2016representations,nezami2016multipartite,zhu2015multiqubit,kueng2015qubit,webb2016clifford,nebe_invariants,nebe-book,runge93},
the group generated by the oscillator representation of $\Sp(V)$ and the Weyl representation of the Heisenberg group.
In Ref.~\cite{grossnezamiwalter}, it has been shown that the commutant algebra of the Clifford group is generated by projections onto tensor power CSS codes whose isotropic spaces  are orthogonal to the all-ones vector $\underline{1}=(1,\dots, 1)\in\FF_q^t$; together with the elements of $O(U)$ that preserve $\underline{1}$.
While it was not explicitly worked out in Ref.~\cite{grossnezamiwalter}, their arguments strongly suggest that the commutant of the oscillator representation alone is generated by $O(U)$ and tensor power CSS codes, without the constraints involving the $\underline{1}$-vector.
This drew our attention to the action of tensor power representations on CSS code spaces.
While the present paper mostly focuses on the symplectic group alone---instead of the full Clifford group---one can in some cases relate the theory for the two groups explicitly (Sec.~\ref{sec:clifford}):

\begin{proposition*}[Proposition~\ref{prop:symplectoclifford}, simplified version]
  If the characteristic does not divide $t$, there is a one-one correspondence between irreducible components of $t$-th tensor powers of the Clifford goup and irreducible components of $\mu_{U\otimes V}$ for a certain orthogonal space $U$ of dimension $t-1$.
\end{proposition*}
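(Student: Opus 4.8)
The plan is to split off one ``diagonal'' copy of the Heisenberg group from $\mu_V^{\otimes t}$ by the Stone--von Neumann theorem, and to recognise the remaining multiplicity space as a smaller oscillator representation. Throughout, write $\mathcal{H}=\CC[\FF_q^n]$ for the standard representation space of $\mu_V$ and $\rho_V$ for the Weyl representation of $\hw(V)$.

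\emph{Geometric set-up.} Fix $U=\FF_q^t$ with the standard orthogonal form, so that $\mathcal{H}^{\otimes t}\cong\mu_{U\otimes V}$ by Corollary~\ref{cor:full factorization}, with the diagonal $\Sp(V)$ sitting inside $\Sp(U\otimes V)$ as $\id_U\otimes\Sp(V)$ and the diagonal Heisenberg group $\{\rho_V(h)^{\otimes t}\mid h\in\hw(V)\}$ being exactly the Heisenberg group of the symplectic subspace $\langle\underline{1}\rangle\otimes V$. Since $\beta(\underline{1},\underline{1})=t\neq 0$ in $\FF_q$ --- this is the only place the hypothesis on the characteristic enters the geometry --- the line $\langle\underline{1}\rangle$ is nondegenerate, so $U=\langle\underline{1}\rangle\perp U'$ with $U':=\underline{1}^\perp$ of dimension $t-1$, and hence $U\otimes V=(\langle\underline{1}\rangle\otimes V)\perp(U'\otimes V)$ as symplectic spaces. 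Up to rescaling the symplectic form by $t$ (which changes neither $\Sp(V)$ nor the isomorphism class of its oscillator representation), $\langle\underline{1}\rangle\otimes V$ is $V$. Using the behaviour of $\mu$ under orthogonal direct sums we obtain an isomorphism
\begin{align*}
  \mathcal{H}^{\otimes t}\;\cong\;\mathcal{H}\otimes\mathcal{H}_{U'\otimes V}
\end{align*}
under which the diagonal Heisenberg group acts as $\rho_V\otimes\Id$ and the diagonal $\Sp(V)$ acts as $\mu_V\otimes(\mu_{U'\otimes V}|_{\Sp(V)})$, the second factor being the restriction of $\mu_{U'\otimes V}$ to the $\Sp(V)$-factor of $O(U')\times\Sp(V)\hookrightarrow\Sp(U'\otimes V)$.

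\emph{Peeling off the Heisenberg group.} Let $C$ be the Clifford group acting on $\mathcal{H}^{\otimes t}$ by $t$-th tensor powers and $D\trianglelefteq C$ the diagonal Heisenberg subgroup. Its centre acts on $\mathcal{H}^{\otimes t}$ by the character $z\mapsto\chi(tz)$, where $\chi$ is the central character of $\mathcal{H}$; this is again nondegenerate because $p\nmid t$, so by Stone--von Neumann $\mathcal{H}$ is the unique irreducible $D$-representation with this central character, and $C/D\cong\Sp(V)$ (using $p\nmid t$ once more, so that passing to $t$-th tensor powers is injective on the Clifford and Heisenberg groups and $C\cong\hw(V)\rtimes\Sp(V)$). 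For any $C$-subrepresentation $W\subseteq\mathcal{H}^{\otimes t}$ the evaluation map
\begin{align*}
  \mathcal{H}\otimes\Hom_{D}(\mathcal{H},W)\;\xrightarrow{\ \sim\ }\;W,\qquad v\otimes f\longmapsto f(v),
\end{align*}
is a linear isomorphism; it is $C$-equivariant because $C$ normalises $D$ and therefore acts by conjugation on $\Hom_D(\mathcal{H},W)$, with $D$ acting trivially there. Hence $W\mapsto\Hom_D(\mathcal{H},W)$ and $X\mapsto\mathcal{H}\otimes X$ are mutually inverse equivalences between $C$-representations with central character $z\mapsto\chi(tz)$ and $\Sp(V)$-representations; in particular the correspondence is additive, preserves multiplicities, and matches irreducibles with irreducibles.

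\emph{Conclusion and main obstacle.} Feeding $\mathcal{H}^{\otimes t}$ into this equivalence and using the factorization above gives $\Hom_D(\mathcal{H},\mathcal{H}^{\otimes t})\cong\mathcal{H}_{U'\otimes V}$ with $\Sp(V)$ acting by $\mu_{U'\otimes V}$. Since the equivalence carries the decomposition into irreducibles of one side to that of the other with identical multiplicities, the irreducible components of the $t$-th tensor power of the Clifford group are in bijection with the irreducible $\Sp(V)$-components of $\mu_{U'\otimes V}$, i.e.\ of $\mu_{U\otimes V}$ for an orthogonal space $U$ of dimension $t-1$. The substance of the argument is the Stone--von Neumann reduction; the delicate part is the bookkeeping around it: tracking the form rescaled by $t$ on $\langle\underline{1}\rangle\otimes V$ and the twisted central character $z\mapsto\chi(tz)$ (and re-checking that Stone--von Neumann applies, which again needs only $p\nmid t$), and verifying that the residual $\Sp(V)$-action on the multiplicity space is genuinely $\mu_{U'\otimes V}$ and not a twist of it. Both follow from the covariance relation $\mu_V(g)\rho_V(h)\mu_V(g)^{-1}=\rho_V(g\cdot h)$ together with the functoriality of $\mu$ under orthogonal direct sums established earlier in the paper.
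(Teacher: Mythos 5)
Your proof is correct and takes essentially the same route as the paper: split $\FF_q^t$ orthogonally along the non-isotropic vector $\underline{1}$ (the only place $p\nmid t$ enters), factorize $\cliff_V^{\otimes t}\simeq \cliff_V^{(t)}\otimes\mu_{U'\otimes V}$ with $U'=\underline{1}^\perp$ of dimension $t-1$, and exploit irreducibility of the Weyl action on the first factor --- your Stone--von Neumann/$\Hom_D$ packaging is just a reformulation of the paper's direct observation that every invariant subspace must factorize as $L^2(X^*)\otimes\mathcal{K}$. One bookkeeping caveat: rescaling the symplectic form by $t$ \emph{does} change the isomorphism class of the oscillator representation when $t$ is a non-square (the first factor is the mass-$t$ representation $\cliff_V^{(t)}$, not $\cliff_V$), but this is harmless here since your argument only uses irreducibility of the Heisenberg action, which holds for any nonzero mass.
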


An $\Sp(V)$-representation space is trivial if and only if it has rank equal to $0$ \cite{gh17}.
The rank-$0$ case connects our results with prior work on the \emph{invariants} of the Clifford group \cite{nebe-book,runge93}.
Indeed, it is well-knwon that the invariants are associated with self-\emph{dual} CSS codes, i.e.\ those arising from subspaces $N\subset U$ with $N^\perp = N$.
In this sense, our work can be seen as a generalization of these results to higher ranks.

Our main theorem is based on a careful analysis of the action of certain Fourier transforms in the oscillator representation.
The same techniques can be used to find auxilliary results, which may be of independent interest.
For example, we show that the ``set of ranks'' one can associate with an irreducible $\Sp(V)$-subrepresentation of $\mu_{U\otimes V}$ is a contiguous set of integers (Prop.~\ref{prop:rank spectrum}).

The rest of the paper is organized as follows:
We will introduce the technical background in Sect.~\ref{sec:background},
prove the main theorem in Sec.~\ref{sec:main},
and lay out the connections to the Clifford group in Sec.~\ref{sec:clifford}.

This work is written in a basis-free language inspired by \cite{gh17}.
We believe that the results will be of interest to researchers in quantum information theory, who may not be familiar with this point of view.
A follow-up paper \cite{in-prep} will address a quantum information audience, both in terms of presentation and in terms of applications.
In particular, it will also treat the Clifford group in characteristic 2.

\section{Technical background}
\label{sec:background}

In this section, we collect definitions and some technical results.

\subsection{General notation}

In what follows, $q$ is the power of an odd prime $p$, and $\FF_q$ the finite field of order $q$.
We denote the multiplicative group in $\FF_q$ by $\FF_q^\times$.
For $\lambda\in\FF_q^\times$, the \emph{Legendre symbol} is
 $\big(
   \frac{\lambda}{q}
 \big)$,
which is $+1$ if $\lambda$ is a square in $\FF_q^\times$, and $-1$ else.
If $q$ is clear from the context, we also use the short-hand notation $\ell_\lambda$ for the Legendre symbol.
We write $\Tr$ for the the \emph{field trace} $\FF_q \to \FF_p$.

The \emph{transpose} of a linear map $A: Y\to Z$ is $A^t: Z^* \to Y^*$ (not to be confused with $A^T$, which is defined in Eq.~\eqref{eqn:Transpose}).
A map $A: Y \to Y^*$ is \emph{symmetric} if $A=A^t$.

\subsection{The oscillator representation}
\label{sec:oscillator}

Let $V=X\oplus X^*$ be the direct sum of two $n$-dimensional dual vector spaces over $\FF_q$.
The space $V$ carries a symplectic form
\begin{equation*}
  [x\oplus y, x'\oplus y']= y'(x)-y(x').
\end{equation*}
Every symplectic vector space is (non-canonically) of this form.
Indeed, the choice of a decomposition $V=X\oplus X^*$ is equivalent to fixing a \emph{polarization} of $V$.
From now on, we will assume that dual $X, X^*\subset V$ have been chosen.

The \emph{oscillator representation} $\mu_{V}$ is a representation of $\Sp(V)$ on the Hilbert space $L^2(X^*)$ of complex functions on $X^*$.
The representation depends on a parameter $m\in\FF_q^\times$
-- sometimes referred to as the \emph{mass} of the representation in mathematical physics \cite{folland_harmonic} --
which defines a character
\begin{align*}
  \omega^{(m)}: \FF_q \to \CC, \qquad \lambda \mapsto e^{i\frac{2\pi }{p} \Tr (m \lambda )}
\end{align*}
of $\FF_q$.
One can show \cite{gh17} that the oscillator  representations $\mu_V^{(m)}$, $\mu_V^{(m')}$ are unitarily equivalent if and only if $m$ and $m'$ belong to the same square class.
What is more, $\mu_V^{(-m)} = \bar{\mu}_V^{(m)}$, i.e.\ the inverting the sign of the mass corresponds to passing to the complex conjugate representation.
From now on, we will write $\omega, \mu_V$ for $\omega^{(1)}$ and $\mu_V^{(1)}$ respectively.

Next, we recall \cite{gh17,gerardin} the explicit form of the oscillator representation on the following three subsets, which taken together generate $\Sp(V)$.
\begin{align}
  \mathcal{J} &=
  \left\{
  \left(
       \begin{array}{cc}
         0 & B \\
         -B^{-1} & 0
       \end{array}
  \right)
	\,\Big|\,
	B: X^*\to X,\; B\text{ invertible, symmetric}
  \right\}, \label{eqn:J} \\
  \mathcal{N} &=
  \left\{
	\left(
	  \begin{array}{cc}
		\Id & A \\
		0 & \Id
	  \end{array}
	\right)
	\,\Big|\,
	A: X^*\to X,\; A\text{ symmetric}
  \right\}, \label{eqn:N} \\
  \mathcal{D} &=
  \left\{
	\left(
	  \begin{array}{cc}
		C & 0 \\
		0 & C^{-t}
	  \end{array}
	\right)
	\,\Big|\,
	C\in GL(X)
  \right\}. \label{eqn:D}
\end{align}
The sets $\mathcal{N}$ and $\mathcal{D}$ are subgroups and generate the \emph{Siegel parabolic}, with the Abelian $\mathcal{N}$ the \emph{unipotent radical} of the parabolic group.
We write, respectively, $N_A, J_B, D_C$ for the elements of $\mathcal{N}, \mathcal{J}, \mathcal{D}$ that appear above.
Let $y\in X^*$ and let $\delta_y\in L^2(X^*)$ the indicator function at $y$.
Then the action of the oscillator representation is 
\begin{align}
    \mu_V\left(J_B\right) \delta_y
    &=
    \gamma(B)^{-1} \sum_{y'\in X^*}
    \omega\big(-2^{-1} B(y,y')\big) \delta_{y'}, \label{eqn:J oscillator}\\
    \mu_V\left(N_A\right)\delta_y
    &=
    \omega\big(2^{-1} A(y,y)\big)\,\delta_y, \label{eqn:N oscillator}\\
  \mu_V\left(D_C\right)\delta_y
	&=
	\ell_{\det C}\,\delta_{C^{-t} y}, \label{eqn:D oscillator}
\end{align}
where $B(y,y')$ is a less-confusing notation for $B(y)(y')$, and where
\begin{align*}
  \gamma(B) = \sum_{y\in X^*} \omega\big(-2^{-1}B(y,y)\big)
\end{align*}
is the \emph{Gauss sum} corresponding to $B$.

We will frequently make use of the fact that the oscillator representation of block matrices factorizes.

\begin{lemma}\label{lem:factorization general}
  Let $X=X_1\oplus X_2$ be a direct sum of vector spaces.
  Then we have an orthogonal decomposition $V=V_1\oplus V_2$ of $V=X\oplus X^*$ into symplectic subspaces $V_i=X_i\oplus X_i^*$.
  As a representation of the subgroup $\Sp(V_1)\times\Sp(V_2) \subset \Sp(V)$, the oscillator representation factorizes
  \begin{align}\label{eqn:factorization general}
	{\mu_V}\simeq \mu_{V_1}\otimes \mu_{V_2}.
  \end{align}

  Let $\pi_i: X \to X_i$ be the projections onto the $i$-th direct summand.
  An isomorphism
  \begin{align*}
	L^2(X^*) \to L^2(X^*_1) \otimes L^2(X^*_2)
  \end{align*}
  realizing Eq.~\eqref{eqn:factorization general} is given by
  \begin{align}
	\delta_y \mapsto \delta_{y \pi_1}\otimes\delta_{y\pi_2}. \label{eqn:isomorphism factorization}
  \end{align}
\end{lemma}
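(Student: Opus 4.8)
The plan is to verify the claimed isomorphism directly on the three generating sets $\mathcal{N}$, $\mathcal{J}$, $\mathcal{D}$ of $\Sp(V)$, using the explicit formulas in Eqs.~(\ref{eqn:N oscillator})--(\ref{eqn:D oscillator}). First I would set up the decompositions: $X = X_1 \oplus X_2$ induces $X^* = X_1^* \oplus X_2^*$ (identifying $X_i^*$ with the annihilator of $X_{3-i}$), hence $V = V_1 \oplus V_2$ with $V_i = X_i \oplus X_i^*$, and this decomposition is orthogonal for the symplectic form since the pairing $[x_1 \oplus y_1, x_2 \oplus y_2] = y_2(x_1) - y_1(x_2)$ vanishes when $x_i \in X_i$, $y_i \in X_i^*$ and the indices are mismatched. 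The embedding $\Sp(V_1) \times \Sp(V_2) \hookrightarrow \Sp(V)$ acts block-diagonally. The map $\Phi: L^2(X^*) \to L^2(X_1^*) \otimes L^2(X_2^*)$ of Eq.~(\ref{eqn:isomorphism factorization}) is clearly a bijection on the delta-basis (with inverse $\delta_{y_1} \otimes \delta_{y_2} \mapsto \delta_{y_1 + y_2}$) and is unitary, so it only remains to check intertwining.

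The key steps are the three generator checks. For $D_C$ with $C = C_1 \oplus C_2 \in GL(X_1) \times GL(X_2)$: Eq.~(\ref{eqn:D oscillator}) gives $\mu_V(D_C)\delta_y = \ell_{\det C}\,\delta_{C^{-t}y}$; since $\det C = \det C_1 \det C_2$ and $\ell$ is multiplicative, and $C^{-t}y$ decomposes coordinatewise, this matches $(\mu_{V_1}(D_{C_1}) \otimes \mu_{V_2}(D_{C_2}))(\delta_{y\pi_1} \otimes \delta_{y\pi_2})$. For $N_A$ with $A = A_1 \oplus A_2$ block-diagonal symmetric: Eq.~(\ref{eqn:N oscillator}) gives a phase $\omega(2^{-1}A(y,y))$, and $A(y,y) = A_1(y\pi_1, y\pi_1) + A_2(y\pi_2, y\pi_2)$, so the phase factorizes as a product and $\omega$ being a character turns the sum into a product of phases — again matching the tensor-product action. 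For $J_B$ with $B = B_1 \oplus B_2$ block-diagonal invertible symmetric: the Gauss sum factorizes as $\gamma(B) = \gamma(B_1)\gamma(B_2)$ (the defining sum over $X^* = X_1^* \oplus X_2^*$ splits because the quadratic form $B(y,y)$ splits and $\omega$ is a character), the sum over $y' \in X^*$ in Eq.~(\ref{eqn:J oscillator}) splits as a double sum over $y_1' \in X_1^*$, $y_2' \in X_2^*$, and the phase $\omega(-2^{-1}B(y,y'))$ factorizes; collecting terms reproduces $\mu_{V_1}(J_{B_1}) \otimes \mu_{V_2}(J_{B_2})$.

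Finally I would note that since the block-diagonal elements $N_A, J_B, D_C$ with block-diagonal parameters generate all of $\Sp(V_1) \times \Sp(V_2)$ (each factor $\Sp(V_i)$ is generated by its own $\mathcal{N}_i, \mathcal{J}_i, \mathcal{D}_i$, which sit inside $\Sp(V)$ as block-diagonal matrices of the stated form), and $\Phi$ intertwines the action on this generating set, it intertwines the action of the whole group; hence $\Phi$ realizes the isomorphism~(\ref{eqn:factorization general}). The main obstacle — and it is a mild one — is the bookkeeping for $J_B$: one must be careful that the element $J_{B_1}$ of $\Sp(V_1)$, when viewed in $\Sp(V)$, is \emph{not} of the form $J_B$ for an invertible $B$ on all of $X^*$ (it is block-diagonal but degenerate off the $V_1$ block), so the cleanest route is to observe that $\mathcal{J}$-type elements together with $\mathcal{N}$ and $\mathcal{D}$ already generate each $\Sp(V_i)$, and to verify intertwining for the genuinely block-diagonal $J_B = J_{B_1} \oplus J_{B_2}$ with both blocks invertible, which suffices since such elements (for all choices of $B_1, B_2$) together with the $\mathcal{N}$- and $\mathcal{D}$-generators still generate $\Sp(V_1) \times \Sp(V_2)$.
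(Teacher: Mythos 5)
Your plan is correct, and it is precisely the route the paper acknowledges but deliberately avoids: a direct verification of the intertwining property on explicit generators, including the Gauss-sum bookkeeping for the $\mathcal{J}$-type elements. The paper's own proof instead checks the (essentially trivial) factorization of the Weyl operators $W_V(v)$ under the isomorphism (\ref{eqn:isomorphism factorization}) and then lifts it to the oscillator representation via the covariance relation (\ref{eqn:symplectic action}): since the Weyl operators form a basis of $\End(L^2(X^*))$, this forces $i\,\mu_V(S)\,i^{-1}=\kappa(S)\,\big(\mu_{V_1}(S)\otimes\Id\big)$ for $S\in\Sp(V_1)$ with some scalar $\kappa(S)$, and $\kappa$ is then shown to be the trivial character of $\Sp(V_1)$ (its restriction to $\mathcal{N}_1$ must carry a $\GL(X_1)$-invariant weight, hence be trivial, and $\Sp(V_1)$ is generated by conjugates of $\mathcal{N}_1$). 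The trade-off: the paper's argument needs no explicit formulas for $\mu_V$ on generators and no Gauss-sum identities, at the price of the small character argument and the Stone--von Neumann-style ``unique up to a phase'' step; your argument is more elementary and self-contained, at the price of the computations you outline (all of which do go through as you describe: the cross terms $A_1(y_1)(y_2)$ vanish because $X_2^*$ annihilates $X_1$, $\ell$ and $\gamma$ are multiplicative over the blocks, and the sum over $y'\in X^*$ splits).

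One assertion in your last paragraph needs an explicit line of justification: that the genuinely block-diagonal elements $N_{A_1\oplus A_2}$, $D_{C_1\oplus C_2}$ and $J_{B_1\oplus B_2}$ (both $B_i$ invertible) generate all of $\Sp(V_1)\times\Sp(V_2)$. This is true but not immediate, since you cannot reach $(J_{B_1},\Id)$ by cancelling the second factor with parabolic elements ($J_{B_2}^{-1}=J_{-B_2}$ is again of $\mathcal{J}$-type). A quick fix: conjugating $N_{A_1\oplus 0}$ by $J_{B_1\oplus B_2}$ gives the pair consisting of the lower-unipotent element with block $-B_1^{-1}A_1B_1^{-1}$ in the first factor and the identity in the second; hence the group you generate contains both $\mathcal{N}_1\times\{\Id\}$ and its opposite $\mathcal{N}_1^-\times\{\Id\}$, which generate $\Sp(V_1)\times\{\Id\}$, and symmetrically for the second factor. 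With that line added, your proof is complete.
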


This factorization property is well-known -- see e.g.\ Ref.~{\cite[Corollary 2.5]{gerardin}}.
We give a short self-contained proof in Appendix~\ref{app:factorization proof}.

\subsection{The rank of a representation}
\label{sec:rank}

We consider the subgroups $\mathcal{N}, \mathcal{D}$ of $\Sp(V)$ given in Eqs.~\eqref{eqn:N}, \eqref{eqn:D}.

If $\pi$ is a representation of $\Sp(V)$ on some Hilbert space $\mathcal{H}$, then the restriction of $\pi$ to the Abelian group $\mathcal{N}$ decomposes $\mathcal{H}$ into a direct sum of one-dimensional representations.
Every character of $\mathcal{N}$ is of the form
\begin{align*}
  N_A \mapsto \omega\big(\tr A B\big)
\end{align*}
for some symmetric $B: X \to X^*$, which we will refer to as an $\mcal{N}$-\emph{weight}.
With each representation space $\CC \Phi\subset \mathcal{H}$, we can thus associate an $\mcal{N}$-weight $B$ such that
\begin{align*}
  \pi(N_A)\Phi = \omega( \tr AB )\Phi, \quad \forall\, N_A\in\mcal{N}.
\end{align*}
Reference~\cite{gh17} defines the \emph{$\mathcal{N}$-spectrum} of $\pi$ as the set of $\mathcal{N}$-weights,
counted with multiplicities, that occur in the decomposition of $\mathcal{H}$.

The set of $\mathcal{N}$-weights decomposes into a union of orbits under the action $B \mapsto C B C^t$, $C \in \GL(X)$.
This follows from the fact that $\mathcal{D}$ normalizes $\mathcal{N}$:
\begin{align*}
  D_C N_A D_C^{-1} = N_{C A C^{-t}},
\end{align*}
so that if $\Phi$ carries the $\mathcal{N}$-weight $B$, then $\pi(S_C)\Phi$ is associated with the $\mathcal{N}$-weight $C^{-t} B C$.
From the theory of quadratic forms, it is well-known that the orbits are labelled by the rank and the discriminant of $B$
(c.f.\ Section~\ref{sec:orthogonal}).

The \emph{rank of $\pi$} is the maximum of the rank taken over the $\mathcal{N}$-spectrum.
If all $\mathcal{N}$-weights of maximal rank have the same discriminant $d$, $\pi$ is said to have \emph{discriminant} or  \emph{type $d$}.

As an example, we compute the $\mathcal{N}$-spectrum of the oscillator representation.
By Eq.~\eqref{eqn:N oscillator}, the delta functions $\{\delta_y \;|\; y\in X^*\}$ diagonalize the restriction of $\mu_V$ to $\mathcal{N}$.
We can re-write
\begin{align*}
  A(y,y) = A(y)(y) = \tr A (y\otimes y).
\end{align*}
The map $B=2^{-1}\,y\otimes y$ is the most general form of a symmetric map $X\to X^*$ of rank $\leq 1$ and of discriminant $\ell_2$.
Since $\pm y$ lead to the same $B$, the $\mathcal{N}$-spectrum consists of the following $\GL(X)$-orbits: $\{0\}$ occurs once, and the set of non-zero rank-1 $B$'s of discriminant $\ell_2$ occurs twice.

\subsection{Orthogonal spaces and higher-rank representations}
\label{sec:product spaces}

We recall some standard facts about discrete orthogonal spaces (see e.g. \cite{lam2005introduction,pjcameron,chan_notes}) and fix notation.

Let $U$ with be a $t$-dimensional $\FF_q$-vector space with non-degenerate symmetric form $\beta$.
Let $\{f_i\}_{i=1}^t$ be a basis of $U$.
The square class $d(U)$ of the determinant of the matrix with elements $\beta(f_i, f_j)$ does not depend on the basis.
It is called the \emph{discriminant} of the form $\beta$.
Quadratic spaces are characterized up to isometries by their dimension and discriminant.
The discrimant is multiplicative:  if $U_1\oplus U_2$ is an orthogonal sum, then
\begin{align*}
  d(U_1 \oplus U_2) = d(U_1)d(U_2).
\end{align*}
One can find an \emph{orthogonal basis} that diagonalizes the form in that
\begin{align}
	\label{eqn:orthogonal basis}
  	\beta(f_i,f_j) = d_i\,\delta_{i,j}
\end{align}
for suitable $d_i\in\FF_q$.
From the discussion above, it follows that one can choose
\begin{align}\label{eqn:discriminant}
  d_i &= 1\quad (i=1, \dots, t-1),\qquad d_t \in d(U),
\end{align}
and we will usually do so.

An important orthogonal space is the \emph{hyperbolic plane} $\mathbb{H}$, which has dimension $t=2$ and discriminant $d(\mathbb{H})=-1$.

For a subspace $N\subset U$, the space orthogonal to it is $N^\perp = \{ u \;|\; \beta(u,v)=0 \,\forall v \in N\}$.
The space $N$ is \emph{isotropic} if $N \subset N^\perp$. 
From the relation $\dim N + \dim N^\perp = t$, valid for any non-degenerate form, one finds the dimension
bound for isotropic spaces:
\begin{align}\label{eqn:isotropic dimension bound}
  N\subset N^\perp \qquad\Rightarrow\qquad
  \dim N \leq \frac t2
\end{align}

We will use the symbol $\beta$ both to refer to the form $U\times U \to \FF_q$ and to the induced isomorphism
\begin{align*}
  \beta: U \to U^*, \qquad u \mapsto \beta(u):=\beta(u, \cdot).
\end{align*}
For maps $F\in\Hom(Y\to U)$, we will write
\begin{align}\label{eqn:Transpose}
  F^T :=  F^t \circ \beta \in\Hom(U \to Y^*).
\end{align}
With $U \simeq \Hom(\FF_q\to U)$ and $\FF_q^*\simeq \FF_q$, this implies in particular
\begin{align*}
  u^T=u\circ \beta 
  = \beta(u)
  =\beta(u,\cdot).
\end{align*}

If the form $\beta$ is degenerate, then the quotient space $U/\rad\beta$ of $U$ up to the radical of $\beta$ is non-degenerate.
The \emph{rank} and the discriminant of $U$ are then defined to be the dimension and the discriminant of the quotient space.

A symmetric map $B: X\to X^*$ defines a quadratic form $B(x,y)=B(x)(y)$ on a linear space $X$.
Below, we will often be concerned with forms defined as $B=F^T F$ for some $F: X \to U$.
In this case, we have
\begin{align}\label{eqn:b=yty}
  B(x,y)
  =(F^t\beta F)(x)(y)
  =\beta(Fx, Fy),
\end{align}
so that the rank and discriminant of such $B$ are the rank and the discriminant of $\range F$ as a subspace of $U$.

Given a space $V=X \oplus X^*$ and an orthogonal space $U$, the tensor product $U\otimes V$ is again a direct sum of dual spaces and thus carries a symplectic form.
Indeed,
\begin{align}\label{eqn:product configuration space}
  U\otimes V
  \simeq (U\otimes X) \oplus (U\otimes X^*)
\end{align}
and the pairing between (factorizing) elements of the two summands is just
\begin{align}\label{eqn:pairing}
  \langle u\otimes x, v\otimes y \rangle  =  \beta(u,v) y(x).
\end{align}
We will usually make the identification
\begin{align*}
  U\otimes X &= \Hom(X^*\to U),
  \qquad
  U\otimes X^* = \Hom(X\to U).
\end{align*}
Then the pairing~\eqref{eqn:pairing} between $Z\in\Hom(X^*\to U)$ and $F\in\Hom(X\to U)$ takes the form
\begin{align}\label{eqn:operator pairing}
  \langle Z, F \rangle
  %= \tr F^t \beta Z
  = \tr \beta Z F^t.
\end{align}
It follows that there is an oscillator  representation $\mu_{U\otimes V}$ of $\Sp(U\otimes V)$ on $L^2(\Hom(X \to U))$.

From Eq.~\eqref{eqn:pairing}, one sees that $O(U)\times \Sp(V)$ embeds into $\Sp(U\otimes V)$.
The main goal of this work is to understand the restriction of $\mu_{U\otimes V}$ to $\Sp(V)$.

We compute the $\mathcal{N}$-spectrum and rank of $\mu_{U\otimes V}$ as an $\Sp(V)$-representation.
To this end, we must find the eigenspaces of $\mu_{U\otimes V}(\Id_U\otimes N_A)$.
Under the identification~\eqref{eqn:product configuration space},
\begin{align*}
  N_A
  =
  \left(
	\begin{array}{cc}
	  \Id & A \\
	  0 & \Id
	\end{array}
  \right)
  \in \Sp(V)
  \qquad
  \Rightarrow
  \qquad
  \Id \otimes N_A
  \simeq
  \left(
	\begin{array}{cc}
	  \Id\otimes\Id & \Id \otimes A \\
	  0 & \Id\otimes\Id
	\end{array}
  \right) \in \Sp(U\otimes V).
\end{align*}
Thus, the embedding $\Id\otimes N_A$ of $N_A\in\Sp(V)$ into $\Sp(U\otimes V)$ is again an element of the unipotent radical.
The action of $\mu_{U\otimes V}(\Id\otimes N_A)$ is thus also given by Eq.~\eqref{eqn:N oscillator}, this time acting on $L^2(\Hom(X\to U))$.
Let $F\in\Hom(X\to U))$.
With Eq.~\eqref{eqn:operator pairing}, we can express the quadratic form in Eq.~\eqref{eqn:N oscillator} as
\begin{align}\label{eqn:embedded weight}
  (\Id\otimes A)(F)(F)
  =\langle F, (\Id\otimes A) F \rangle
	= \langle F, F A \rangle
	= \tr \beta F A F^t
    = \tr F^T F A.
\end{align}
The $\mathcal{N}$-weight on $\delta_F$ is thus given by
\begin{align*}
  B=2^{-1} F^T F.
\end{align*}
Its rank is upper-bounded by $\min(n,t)$.
From now on, we will focus on the case where $t\leq n$ (this is referred to as the \emph{stable range} in \cite{gh17}), and call a representation of rank strictly smaller than $t$ \emph{rank-deficient}.
It follows that the representation space
\begin{align}\label{eqn:Y weights}
  \big\{ \Phi \in L^2(\Hom(X\to U)) \;\big|\; \mu_{U\otimes V}(N_A)\Phi = \omega(\tr AB)\Phi \big\}
\end{align}
on which $\mathcal{N}\subset \Sp(V)$ acts with $\mathcal{N}$-weight $B$ is equal to the span
$
  \langle
    \{ \delta_F \;|\; F^T F = B\}
  \rangle
$
of the $\delta_F$'s with $F^T F=B$.

\subsection{Representations associated with direct sums of orthogonal spaces}
\label{sec:orthogonal}

The original motivation of this work was to understand tensor power representations $\mu_V^{\otimes t}$.
The more geometric language employed e.g.\ in Ref.~\cite{gh17} relates tensor factors to direct summands of orthogonal spaces.
The following corollary of Lemma~\ref{lem:factorization general} makes the connection precise.

\begin{corollary}%[Orthogonal Factorization]\label{lem:factorization}
  \label{cor:factorization U}
 Assume $U=U_1 \oplus U_2$ is an orthogonal direct sum.
 Then, as a representation of $\Sp(V)$, the oscillator representation factorizes as
 \begin{align}\label{eqn:orthogonal factorization}
  \mu_{(U_1\oplus U_2) \otimes V } \simeq \mu_{U_1 \otimes V }\otimes \mu_{U_2\otimes V}.
 \end{align}
 Let  $\pi_i: U \to U_i$ be the projections onto the direct summands.
 An isomorphism
 \begin{align*}
  L^2(\Hom(X\to U)) \to L^2(\Hom(X\to U_1))\otimes L^2(\Hom(X \to U_2)).
 \end{align*}
 realizing Eq.~\eqref{eqn:orthogonal factorization} is defined by
 \begin{align}\label{eqn:orthogonal isomorphism}
  \delta_F \mapsto \delta_{\pi_1 F} \otimes \delta_{\pi_2 F}.
 \end{align}
\end{corollary}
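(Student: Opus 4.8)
The plan is to deduce this directly from Lemma~\ref{lem:factorization general} by choosing the polarization of $W := U\otimes V$ adapted to the splitting of $U$. Recall that $W$ is symplectic for the pairing $\langle u\otimes x, v\otimes y\rangle = \beta(u,v)\,y(x)$ and decomposes as $W = (U\otimes X)\oplus(U\otimes X^*)$. I take this as the polarization $W = \mcal{X}\oplus\mcal{X}^*$ with $\mcal{X} = U\otimes X \cong \Hom(X^*\to U)$ and $\mcal{X}^* = U\otimes X^* \cong \Hom(X\to U)$, which are genuinely dual under the pairing above. The orthogonal splitting $U = U_1\oplus U_2$ then induces $\mcal{X} = \mcal{X}_1\oplus\mcal{X}_2$ with $\mcal{X}_i := U_i\otimes X$, and the symplectic subspaces attached to this decomposition in the sense of Lemma~\ref{lem:factorization general} are $W_i := \mcal{X}_i\oplus\mcal{X}_i^* = (U_i\otimes X)\oplus(U_i\otimes X^*) = U_i\otimes V$.

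Before invoking the lemma I would verify the two compatibility points it requires. First, $W = W_1\oplus W_2$ must be a decomposition into symplectically orthogonal symplectic subspaces: the pairing $\langle u\otimes x, v\otimes y\rangle = \beta(u,v)\,y(x)$ vanishes whenever $u\in U_1$ and $v\in U_2$ because $U_1\perp U_2$, so $W_1$ and $W_2$ are symplectically orthogonal; and the restriction of the symplectic form to each $W_i$ is non-degenerate because $\beta$ restricted to $U_i$ is non-degenerate, an orthogonal direct summand of a non-degenerate space being itself non-degenerate. Second, the projection $\pi_i: U\to U_i$ induces the projection $\mcal{X}\to\mcal{X}_i$ and, dually, $\mcal{X}^*\to\mcal{X}_i^*$; under the identification $\mcal{X}^* = \Hom(X\to U)$ the latter sends $F\mapsto \pi_i\circ F = \pi_i F$.

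With this setup, Lemma~\ref{lem:factorization general} applied to $W = W_1\oplus W_2$ yields an isomorphism $\mu_W\simeq\mu_{W_1}\otimes\mu_{W_2}$ of representations of $\Sp(W_1)\times\Sp(W_2)\subset\Sp(W)$, and its explicit form $\delta_y\mapsto\delta_{y\pi_1}\otimes\delta_{y\pi_2}$ becomes exactly $\delta_F\mapsto\delta_{\pi_1 F}\otimes\delta_{\pi_2 F}$ on $L^2(\Hom(X\to U))$, i.e.\ the map in Eq.~(\ref{eqn:orthogonal isomorphism}). It then remains to restrict this isomorphism along the subgroup $\Sp(V)\hookrightarrow\Sp(W)$, $g\mapsto\Id_U\otimes g$. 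This embedding respects the decomposition, since $\Id_U\otimes g = (\Id_{U_1}\otimes g)\oplus(\Id_{U_2}\otimes g)$, so $g$ maps to the pair $(\Id_{U_1}\otimes g,\Id_{U_2}\otimes g)\in\Sp(W_1)\times\Sp(W_2)$. Hence restricting the external tensor product $\mu_{W_1}\otimes\mu_{W_2}$ along this embedding gives precisely the tensor product $\mu_{U_1\otimes V}\otimes\mu_{U_2\otimes V}$ of $\Sp(V)$-representations with $\Sp(V)$ acting diagonally, which is Eq.~(\ref{eqn:orthogonal factorization}).

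There is no genuinely hard step: the entire content is the bookkeeping that the polarization and projections chosen above are the ones Lemma~\ref{lem:factorization general} expects. The one point I would spell out rather than merely assert is the claim that an orthogonal direct sum decomposition of $U$ produces a symplectically orthogonal decomposition of $U\otimes V$ into honest symplectic subspaces, i.e.\ that $W_1\perp W_2$ and that the symplectic form stays non-degenerate on each $W_i$; both reduce to the non-degeneracy of $\beta$ on an orthogonal summand of $U$.
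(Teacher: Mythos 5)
Your proposal is correct and follows essentially the same route as the paper: both apply Lemma~\ref{lem:factorization general} to the induced decomposition $U\otimes V=(U_1\otimes V)\oplus(U_2\otimes V)$, using that $\beta$ is non-degenerate on each $U_i$ and vanishes between them to identify $\Hom(X\to U_i)$ with the corresponding dual summands and to see that the lemma's isomorphism becomes $\delta_F\mapsto\delta_{\pi_1F}\otimes\delta_{\pi_2F}$. The only difference is that you spell out slightly more (symplectic orthogonality, non-degeneracy on each $U_i\otimes V$, and the diagonal restriction of $\Sp(V)$), which the paper leaves implicit.
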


\begin{proof}
	By assumption, both terms $U_i$ are non-degenerate $\beta$-spaces, so we have a canonical identifications $U_i^*\cong U_i$
	and $\Hom(X\to U_i)^* \cong \Hom(X^*\to U_i)$. The latter identification satisfies that for any $h\in\Hom(X\to U_1)^*$ and
	any $f \in\Hom(X\to U_2)$, it holds that $h(f)=0$ (and the same statement holds if we exchange $U_1$ and $U_2$).

	This way, the advertised claim is a consequence of Corollary~\ref{lem:factorization general} for the decomposition
	\begin{align*}
		\Hom(X\to U) 	&= \Hom(X\to U_1)\oplus\Hom(X\to U_2)\\
		\Hom(X\to U)^*&= \Hom(X^*\to U_1)\oplus\Hom(X^*\to U_2)\\
	\end{align*}
	which give rise to the following decomposition into symplectic subspaces
	\begin{align*}
		U\otimes V = (U_1\otimes V)\oplus(U_2\otimes V).
	\end{align*}
\end{proof}

Iterating this observation over an orthogonal basis gives the connection between  $\mu_{U\otimes V}$ and tensor powers of $\mu_V$.

\begin{corollary}\label{cor:full factorization}
  As a representation of $\Sp(V)$, we have that
  \begin{align}\label{eqn:full factorization}
	\mu_{U\otimes V}
	\simeq
	\underbrace{\mu_{V}\otimes\dots\otimes\mu_V}_{ (t-1) \, \times }\otimes \mu_V^{(d(U))}.
  \end{align}

  Let $\{f_i\}_{i=1}^t$ be an orthogonal basis of $U$ as in Eq.~\eqref{eqn:discriminant}.
  An isomorphism
  \begin{align*}
	L^2(\Hom(X\to U)) \to \big(L^2(X^*)\big)^{\otimes t}
  \end{align*}
  realizing Eq.~\eqref{eqn:full factorization} is defined by
  \begin{align}\label{eqn:full isomorphism}
	\delta_F \mapsto \delta_{f_1^T F} \otimes \dots
	\otimes \delta_{f_t^T F}.
  \end{align}
\end{corollary}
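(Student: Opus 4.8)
The plan is to obtain Eq.~(\ref{eqn:full factorization}) by iterating Corollary~\ref{cor:factorization U} along an orthogonal basis of $U$; the only genuinely new ingredient is the one-dimensional case, which identifies $\mu_{\langle f\rangle\otimes V}$ with a mass-rescaled oscillator representation of $V$.

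First I would run an induction on $t=\dim U$. Fix an orthogonal basis $\{f_i\}_{i=1}^t$ with $\beta(f_i,f_j)=d_i\,\delta_{ij}$ normalized as in Eq.~(\ref{eqn:discriminant}), and write $U=\langle f_1\rangle\oplus U'$ with $U'=\langle f_2,\dots,f_t\rangle$; both summands are non-degenerate and mutually orthogonal, so Corollary~\ref{cor:factorization U} gives $\mu_{U\otimes V}\simeq\mu_{\langle f_1\rangle\otimes V}\otimes\mu_{U'\otimes V}$ as $\Sp(V)$-representations, realized by $\delta_F\mapsto\delta_{\pi_1 F}\otimes\delta_{\pi' F}$ with $\pi_1\colon U\to\langle f_1\rangle$, $\pi'\colon U\to U'$ the orthogonal projections. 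Iterating on $U'$ produces $\delta_F\mapsto\delta_{\pi_1 F}\otimes\dots\otimes\delta_{\pi_t F}$ with $\pi_i\colon U\to\langle f_i\rangle$ the orthogonal projections (the intermediate compositions of projections collapse to these because $\langle f_i\rangle^\perp$ is spanned by the remaining $f_j$). Since $f_i^T=\beta(f_i)$ annihilates $\langle f_i\rangle^\perp$ we have $f_i^T(\pi_i F)=f_i^T F$, so after the one-dimensional identifications below this becomes exactly the asserted map $\delta_F\mapsto\delta_{f_1^T F}\otimes\dots\otimes\delta_{f_t^T F}$; it is manifestly a bijection of the relevant $\delta$-bases, hence unitary.

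For the one-dimensional base case, take $\langle f\rangle$ with $\beta(f,f)=d\in\FF_q^\times$. The linear map $\Hom(X\to\langle f\rangle)\to X^*$, $F\mapsto f^T F$, is a bijection — it sends $f\otimes g$ to $d\,g$ — so $\delta_F\mapsto\delta_{f^T F}$ is a unitary $L^2(\Hom(X\to\langle f\rangle))\to L^2(X^*)$. The embedding $s\mapsto\Id_{\langle f\rangle}\otimes s$ of $\Sp(V)$ into $\Sp(\langle f\rangle\otimes V)$ sends the generators $N_A,J_B,D_C$ of Eqs.~(\ref{eqn:N})--(\ref{eqn:D}) to elements of the corresponding subsets $\mathcal{N},\mathcal{J},\mathcal{D}$ for $\langle f\rangle\otimes V$ (immediate from the block form, as already observed for $\mathcal{N}$ around Eq.~(\ref{eqn:embedded weight})). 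Feeding these into Eqs.~(\ref{eqn:N oscillator})--(\ref{eqn:D oscillator}), using Eq.~(\ref{eqn:embedded weight}) together with $F^T F=d\,(g\otimes g)$ for $F=f\otimes g$, and transporting along $\delta_F\mapsto\delta_{f^T F}$ (i.e.\ $g=d^{-1}y$), one finds: the $\mathcal{N}$-generator acts by $\delta_y\mapsto\omega\big(2^{-1}d^{-1}A(y,y)\big)\delta_y$; the $\mathcal{J}$-generator acts by the analogue of Eq.~(\ref{eqn:J oscillator}) with $\omega$ and the Gauss sum $\gamma$ replaced by their mass-$d^{-1}$ counterparts; and the $\mathcal{D}$-generator acts by $\delta_y\mapsto\ell_{\det C}\,\delta_{C^{-t}y}$, which carries no $\omega$-factor (note $\det(\Id_{\langle f\rangle}\otimes C)=\det C$) and is therefore unchanged. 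These are precisely the formulas for $\mu_V^{(d^{-1})}$, so $\mu_{\langle f\rangle\otimes V}\simeq\mu_V^{(d^{-1})}$.

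Assembling, $\mu_{U\otimes V}\simeq\mu_V^{(d_1^{-1})}\otimes\dots\otimes\mu_V^{(d_t^{-1})}$ via $\delta_F\mapsto\bigotimes_i\delta_{f_i^T F}$. By the normalization Eq.~(\ref{eqn:discriminant}), $d_i=1$ for $i<t$ and $d_t\in d(U)$; since $d$ and $d^{-1}$ always lie in the same square class and $\mu_V^{(m)}\simeq\mu_V^{(m')}$ whenever $m,m'$ do (Sec.~\ref{sec:oscillator}), we get $\mu_V^{(d_i^{-1})}=\mu_V$ for $i<t$ and $\mu_V^{(d_t^{-1})}\simeq\mu_V^{(d(U))}$, which is Eq.~(\ref{eqn:full factorization}). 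The only step requiring real care is the bookkeeping in the base case — keeping straight the two polarizing summands $\langle f\rangle\otimes X$ and $\langle f\rangle\otimes X^*$, the form $\beta$ on $\langle f\rangle$, and the various transposes — but this is purely mechanical and introduces no new idea.
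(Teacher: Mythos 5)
Your proof is correct and follows essentially the same route as the paper: iterate Corollary~\ref{cor:factorization U} over an orthogonal basis and reduce to the one-dimensional factors. The only cosmetic difference is how the discriminant is handled — you identify $\Hom(X\to\langle f_t\rangle)$ with $X^*$ via $F\mapsto f_t^T F$ and invoke the square-class equivalence $\mu_V^{(d^{-1})}\simeq\mu_V^{(d)}$, while the paper uses the identification $f_t\otimes y\mapsto y$ followed by a twist by $\mu_V(D_C)$ with $C=d(U)^{-1}\Id$, which amounts to the same thing.
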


\begin{proof}
  Set $U_i = \FF_q f_i$, so that $d(U_i)=\beta(f_i,f_i)=d_i$.
  The projections $\pi_i: U\to U_i$ are given by
  \begin{align*}
	u\mapsto d_i^{-1}\,f_i\,f_i^T(u).
  \end{align*}
  Iterating Corollary~\ref{cor:factorization U} thus gives an isomorphism
  \begin{align*}
	i_1: L^2(\Hom(X\to U)) \to \bigotimes_{i=1}^t L^2(\Hom(X\to U_i))
  \end{align*}
  defined by
  \begin{align*}
	\delta_F \mapsto \delta_{f_1 f_1^T F} \otimes \dots \otimes\delta_{f_{t-1}f_{t-1}^T F}\otimes \delta_{d(U)^{-1}\,f_t f_t^T F}.
  \end{align*}
  We may identify $\Hom(X\to U_i)\simeq U_i\otimes X^*$ with $X^*$ via $f_i\otimes y\mapsto y$.
  This induces an isomorphism
  \begin{align*}
	i_2:
	\bigotimes_{i=1}^t L^2(\Hom(X\to U_i)) \to \big(L^2(\Hom(X^*)\big)^{\otimes t}.
  \end{align*}
  Finally, let $C = d(U)^{-1} \Id \in GL(X)$ and, using Eq.~\eqref{eqn:D oscillator}, let $i_3$ be $\mu_V(D_C)$ acting on the $t$-th tensor factor.
  Then the advertised isomorphism is $i_3\, i_2\,  i_1$.
\end{proof}

Note that the standard inner product $\beta(x,y) = \sum_{i=1}^t x_i y_i$ on $\FF_q^t$ has an ortho\emph{normal} basis, and thus discriminant $d(\FF_q^t)=1$.
Therefore,
\begin{align}\label{eqn:tensorpower equiv}
 \mu_{\FF_p^t\otimes V}
 \simeq
 \mu_V^{\otimes t}.
\end{align}

We end this section by analyzing $\mu_{\mathbb{H}\otimes V}$, where $\mathbb{H}$ is the hyperbolic plane.
To this end, define the \emph{permutation representation $\pi$} of $\Sp(V)$ as the map that acts on $L^2(V)$ by sending the delta function $\delta_v$ at $v\in V$ to
\begin{align}\label{eqn:permutation}
  \pi(S)\delta_v = \delta_{Sv}.
\end{align}

\begin{lemma}\label{lem:hyperbolic plane}
  Let $\mathbb{H}$ be the hyperbolic plane. We then have:
  \begin{enumerate}
	\item
	As a representation of $\Sp(V)$, $\mu_{\mathbb{H}\otimes V}$
	  is isomorphic to the permutation representation.

	\item
	If $I\subset\mathbb{H}$ is an isotropic space, then
	$\mu_{\mathbb{H}\otimes V}$ acts trivially on
	\begin{align*}
	  \psi_I:=\sum_{F\in\Hom(X\to I)} \delta_{F} \in L^2(\Hom(X\to\mathbb{H})).
	\end{align*}
  \end{enumerate}
\end{lemma}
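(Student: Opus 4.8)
The plan is to prove part~(1) by producing an explicit $\Sp(V)$-equivariant unitary $L^2(\Hom(X\to \mathbb{H}))\to L^2(V)$, and then derive part~(2) as a transparent corollary.

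First I would choose a concrete hyperbolic basis $\{e,f\}$ of $\mathbb{H}$, so that $\beta(e,e)=\beta(f,f)=0$ and $\beta(e,f)=1$; with respect to this basis $\Hom(X\to\mathbb{H})\simeq \Hom(X\to \FF_q e)\oplus \Hom(X\to \FF_q f)\simeq X^*\oplus X^* \simeq \Hom(X^*\to X)^{?}$— more usefully, recalling the identification $\mathbb{H}\otimes V\simeq(\mathbb{H}\otimes X)\oplus(\mathbb{H}\otimes X^*)$ of Eq.~\eqref{eqn:product configuration space} and that $\mathbb{H}$ is (via $\beta$) self-dual, I would unwind the definitions to see that $\Hom(X\to\mathbb{H})\simeq X^*\oplus X$ in a way that matches $V=X\oplus X^*$ after the evident swap. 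The key computational step is then to check that, under this identification, the three generating families $N_A$, $D_C$, $J_B$ of Eqs.~\eqref{eqn:N}, \eqref{eqn:D}, \eqref{eqn:J}, when embedded into $\Sp(\mathbb{H}\otimes V)$ as $\Id_\mathbb{H}\otimes(\cdot)$ and acted upon via Eqs.~\eqref{eqn:N oscillator}, \eqref{eqn:J oscillator}, \eqref{eqn:D oscillator}, produce exactly the permutation action $\delta_v\mapsto\delta_{Sv}$ of Eq.~\eqref{eqn:permutation}. The point is that the quadratic phases $\omega(2^{-1}A(y,y))$ and Gauss-sum normalizations $\gamma(B)^{-1}$, which are present for a single copy of $\mu_V$, cancel when one tensors the representation with its dual $\bar\mu_V=\mu_V^{(-1)}$: indeed $\mathbb{H}\simeq\langle 1\rangle\oplus\langle -1\rangle$ as an orthogonal space (discriminant $-1$), so by Corollary~\ref{cor:full factorization} $\mu_{\mathbb{H}\otimes V}\simeq \mu_V\otimes\mu_V^{(-1)}=\mu_V\otimes\bar\mu_V$, and $\mu_V\otimes\bar\mu_V$ acting on $L^2(X^*)\otimes L^2(X^*)\simeq L^2(X^*\times X^*)$ is well known to be (equivalent to) the permutation action on $L^2(V)$ via a Fourier transform in one variable. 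Spelling this last equivalence out — that $\mu_V\otimes\bar\mu_V\simeq\pi$ — is really the heart of part~(1), and I would do it by the generator-by-generator check just described, using the explicit formulas already recorded in Section~\ref{sec:oscillator}.

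For part~(2), once the isomorphism $L^2(\Hom(X\to\mathbb{H}))\simeq L^2(V)$ intertwining $\mu_{\mathbb{H}\otimes V}$ with $\pi$ is in hand, I would track where the vector $\psi_I=\sum_{F\in\Hom(X\to I)}\delta_F$ goes. Since $I\subset\mathbb{H}$ is isotropic and $\dim\mathbb{H}=2$, $I$ is either $0$ (trivial case) or one-dimensional, say $I=\FF_q e$ with $e$ isotropic; then $\Hom(X\to I)$ is a $\FF_q$-form's worth of maps, and summing $\delta_F$ over all of them corresponds, under the isomorphism, to summing $\delta_v$ over an isotropic (Lagrangian-in-the-relevant-sense) subspace $I\otimes X\subset V$, i.e.\ to the indicator function of a subspace $W\subset V$ with $W\subset W^\perp$ (symplectically). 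The permutation action $\pi(S)$ fixes $\sum_{v\in W}\delta_v$ precisely when $S$ preserves $W$ setwise — but here $W=I\otimes X$ need not be $\Sp(V)$-invariant, so this naive argument is not quite enough. Instead I would argue directly: the precise claim is that $\psi_I$ is $\Sp(V)$-invariant, and the cleanest route is to check invariance under the three generating families $N_A, D_C, J_B$ using Eqs.~\eqref{eqn:N oscillator}--\eqref{eqn:D oscillator} applied (via $\Id_\mathbb{H}\otimes(\cdot)$ and the reasoning around Eq.~\eqref{eqn:embedded weight}) on $L^2(\Hom(X\to\mathbb{H}))$. For $N_A$ one uses that the $\mathcal{N}$-weight on $\delta_F$ is $\tr F^T F A$ and that $F^T F = F^t\beta F$ vanishes identically when $\range F\subset I$ is $\beta$-isotropic, so $N_A$ fixes each $\delta_F$ in the sum; for $D_C$ one uses that $F\mapsto FC^{-1}$ permutes $\Hom(X\to I)$ and the scalar $\ell_{\det C}$ must be shown to be trivial on this invariant vector (it will drop out because $\mu_V^{(d(U))}$ with $d(U)=d(I)$ contributes a compensating sign, or more cleanly because the full $\mathbb H$-factor's $D_C$-action is a genuine permutation with no scalar, as established in part~(1)); for $J_B$ one computes $\mu_{\mathbb H\otimes V}(\Id\otimes J_B)\psi_I$ as a Gauss-type sum and checks that, because of the isotropy of $I$, the sum collapses back to $\psi_I$. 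The main obstacle I anticipate is the $J$-generator bookkeeping: getting the Gauss sums, the factor $\gamma(B)^{-1}$, and the $\Id\otimes B$-block's action on $\Hom(X\to\mathbb{H})$ to cancel cleanly requires care with the self-duality identification $\mathbb{H}^*\simeq\mathbb{H}$ and with signs in odd characteristic. A slicker alternative, which I would pursue if the direct computation gets unwieldy, is to exploit part~(1): transport $\psi_I$ through the isomorphism to $L^2(V)$, identify it as (a scalar multiple of) the indicator of the isotropic subspace $I\otimes X\oplus\{0\}\subset V$, and then invoke that the permutation representation's restriction to such an indicator is governed by the $\Sp(V)$-action on isotropic subspaces — but since that action is transitive rather than trivial on isotropics, one sees that $\psi_I$ by itself cannot be invariant unless I have mis-identified its image; resolving this tension (most likely: the image is not an indicator of a fixed subspace but a specific invariant combination, e.g.\ related to $\sum_{v}\delta_v$ summed over all of $V$ or over a distinguished radical) is exactly the subtle point, and I would pin it down by the generator check above, which is unavoidable but routine modulo the $J$-term care already flagged.
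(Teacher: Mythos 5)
Your proposal is essentially correct but takes a more computational route than the paper, and it is worth comparing the two. For part (1) you reduce, as the paper does, to $\mu_{\mathbb{H}\otimes V}\simeq\mu_V\otimes\bar\mu_V$ via Corollary~\ref{cor:full factorization}, but you then propose to establish $\mu_V\otimes\bar\mu_V\simeq\pi$ by a generator-by-generator check through a partial Fourier transform. The paper avoids this bookkeeping entirely: it identifies $\mu_V\otimes\bar\mu_V$ with the adjoint representation $\mathrm{Ad}_{\mu_V}$ on $\End(L^2(X^*))$, and since conjugation permutes the Weyl operators $\{W_V(v)\}_{v\in V}$, which form a basis, the isomorphism with $\pi$ is immediate --- no phases or Gauss sums ever appear. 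For part (2) the paper again exploits this picture: the manifestly fixed vector of $\mathrm{Ad}_{\mu_V}$ is $W_V(0)=\Id=\sum_y\delta_y\otimes\delta_y^T$, and pushing it through the explicit intertwiners of Corollary~\ref{cor:full factorization} lands exactly on $\psi_{I_-}$ with $I_-=\FF_q(f_1-f_2)$; Witt's lemma then covers every isotropic $I$. Your direct generator check of the invariance of $\psi_I$ does work as an alternative: the $\mathcal{N}$-part is trivial since $F^TF=0$ on $\Hom(X\to I)$, the $\mathcal{D}$-scalar is $\ell_{(\det C)^2}=1$ because the embedded block is $\Id_{\mathbb{H}}\otimes C$ on a space of even $U$-multiplicity, and for $J_B$ the character sum collapses onto $\Hom(X\to I^\perp)=\Hom(X\to I)$ (using $I^\perp=I$ in $\mathbb{H}$) with the normalization coming out exactly right because $\mathbb{H}\otimes X^*$ with the form $\beta\otimes B$ is split, so $\gamma(\Id_{\mathbb{H}}\otimes B)=q^n$. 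But note that these are precisely the computations you defer rather than execute, and they are the entire content of the lemma in your approach. Also, in your ``slicker alternative'' you misidentify the image of $\psi_I$: under the correct intertwiner (which includes the partial Fourier transform) $\psi_I$ does not map to the indicator of $I\otimes X\subset V$ but to (a multiple of) $\delta_0\in L^2(V)$, equivalently $W_V(0)$ in the operator picture --- which is fixed by $\pi$ for the trivial reason that $0$ is a fixed point, and this is exactly the one-line argument the paper uses. So your fallback plan is sound, but the tension you flag dissolves once the intertwiner is tracked correctly, and doing so recovers the paper's shorter proof.
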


The second part of the lemma makes a connection between rank-deficient subrepresentations and isotropic spaces.
Generalizations of this will be the central theme in the rest of this work.

The lemma is most easily proved by invoking the \emph{Weyl representation} of the Heisenberg group
introduced in Sec.~\ref{sec:oscillator}.

\begin{proof}
  By Eq.~\eqref{eqn:symplectic action}, the adjoint representation $\mathrm{Ad}_{\mu_V}: A\mapsto \mu_V A \mu_V^{\dagger}$ on $\End(L^2(X^*))$ permutes the Weyl operators $\{ W_V(v) \}_{v\in V}$ and is thus isomorphic to the permutation representation $\pi$.
  But by Corollary~\ref{cor:full factorization},
  \begin{align*}
	\mu_{\mathbb{H}\otimes V}
	\simeq
	\mu_V \otimes \mu_V^{(d(\mathbb{H}))}
	=
	\mu_V \otimes \bar\mu_V
	\simeq
	\mathrm{Ad}_{\mu_V}.
  \end{align*}
  This proves the first claim.

  Next, note that the adjoint representation acts trivially on $W_V(0)=\Id$.
  Our strategy is to show that for every isotropic space $I\subset \mathbb{H}$, one can choose the isomorphisms employed in the first part, to map $W_V(0)$ to $\psi_I$.
  Indeed, the isomorphism $\mathrm{Ad}_{\mu_V}\simeq \mu_V\otimes \bar\mu_V$ is implemented by
  \begin{align*}
	  i_1: \End(L^2(X^*))
	  \to
	  L^2(X^*)^{\otimes 2},
	  \qquad
	  \delta_y\otimes\delta_{y'}^T
	  \mapsto
	  \delta_y\otimes \delta_{y'},
  \end{align*}
  where $\delta_{y'}^T$ is the map acting on $\psi\in L^2(X^*)$ as $\psi\mapsto \psi(y')$.
  Choose an orthogonal basis $\{f_1, f_2\}\subset\mathbb{H}$ as in Eq.~\eqref{eqn:discriminant} and let $i_2$ be the associated isomorphism defined in Corollary~\ref{cor:full factorization}.
  Then
  \begin{align*}
	W_V(0)
	= \Id_V
	= \sum_{y \in X^*} \delta_y\otimes \delta_y^T
	\stackrel{i_1}{\mapsto}
	\sum_{y \in X^*} \delta_y\otimes \delta_y
	  \stackrel{i_2^{-1}}{\mapsto}
	  \sum_{y \in X^*} \delta_{(f_1-f_2)\otimes y}=\psi_{I_-}
  \end{align*}
  where $I_- = \FF_q(f_1 - f_2)$ is isotropic.
	Finally, any isotropic $I\subset U$ can be written this way, with a suitable choice of 
  orthogonal basis $\{f_1, f_2\}$ and associated isomorphism $i_2$.
\end{proof}

\subsection{Quotient spaces and self-orthogonal Calderbank-Shor-Steane codes}
\label{sec:css}

In this section, we will introduce the type of spaces that will turn out to contain all rank-deficient representations.
In the field of quantum error correction, these spaces are called (tensor powers of) \emph{self-orthogonal Calderbank-Shor-Steane (CSS) codes}
\cite{steane1996error,calderbank1996good,steane1996multiple}.

\begin{definition}\label{def:single css}
  Let $N\subset U$ be an isotropic space.
  The \emph{self-orthogonal CSS code} associated with $N$ is the space
  \begin{align*}
	\{ \Phi \in L^2(U) \;|\; \supp \Phi \subset N^\perp,  \Phi(u)=\Phi(u')\quad\forall\,u-u' \in N \}
  \end{align*}
  of functions whose support is contained in $N^\perp$ and which are constant on cosets of $N$.
\end{definition}

We will require an extension of this definition to functions on the tensor product space  $U\otimes X^*\simeq \Hom(X\to U)$.

\begin{definition}\label{def:css}
  Let $N\subset U$ be an isotropic space.
  The \emph{tensor power CSS code} associated with $N$ is the subspace $C_N\subset L^2(\Hom(X \to U))$
  of all functions $\Phi$ satisfying
  \begin{align}\label{eqn:css code}
	   \begin{cases}
       \Phi(F) = \Phi(F'),\quad\text{ if } F-F' \in \Hom(X \to N),\\
       \supp \Phi \subseteq \Hom(X\to N^\perp).
     \end{cases}
  \end{align}
\end{definition}

Using Lemma~\ref{lem:factorization general}, one can see that the codes defined above are indeed tensor powers of the self-orthogonal CSS codes of Definition~\ref{def:single css}.
We also note that projectors onto tensor powers of CSS codes have previously been identified in the commutant of the Clifford group \cite{nebe-book, zhu2015multiqubit, grossnezamiwalter}.

Tensor power CSS codes carry a representation of $\Sp(V)$ that is associated with the orthogonal space $N^\perp / N$:

\begin{lemma}\label{lem:css representation}
  Let $N\subset U$ be an isotropic space.

  The quotient space $U'=N^\perp/N$ inherits an orthogonal form with dimension and discriminant given by, respectively
  \begin{align*}
	\dim U' = U - 2\dim N,
	\qquad
	d(U')=(-1)^{\dim N} d(U).
  \end{align*}
  The stabilizer group $O_N\subset O(U)$ of $N$ acts on $U'$.
  The maps that arise this way are exactly $O(U')$.

  The restriction of $\mu_{U\otimes V}$ to $O_N\times \Sp(V)$ acts on $C_N$.
  As a representation of $O(U')\times \Sp(V)$, it is equivalent to $\mu_{U'\otimes V}$.
\end{lemma}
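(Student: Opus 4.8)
The plan is to decompose the statement into three parts, matching the three assertions of the lemma, and to handle them in order. First I would verify that $U' = N^\perp/N$ is a well-defined non-degenerate orthogonal space. Since $N$ is isotropic, $N\subset N^\perp$, so the quotient makes sense; the form $\beta$ descends to $U'$ because for $u,v\in N^\perp$ and $m,n\in N$ one has $\beta(u+m,v+n)=\beta(u,v)$ using $\beta(N,N^\perp)=0$. Non-degeneracy of the descended form follows because the radical of $\beta|_{N^\perp}$ is exactly $N$ (an element $u\in N^\perp$ pairing trivially with all of $N^\perp$ also pairs trivially with $N$, hence lies in $(N^\perp)^\perp = N$, using non-degeneracy of $\beta$ on $U$). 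The dimension formula $\dim U' = \dim N^\perp - \dim N = (t - \dim N) - \dim N = t - 2\dim N$ is then immediate. For the discriminant, I would argue by an explicit Witt-type decomposition: choose a hyperbolic complement, i.e.\ write $U \cong U' \perp \mathbb{H}^{\oplus \dim N}$ (this is the standard statement that $N^\perp$ splits off a totally isotropic $N$ together with a dual isotropic, each pair forming hyperbolic planes), and use multiplicativity of the discriminant together with $d(\mathbb{H}) = -1$ to get $d(U) = d(U')(-1)^{\dim N}$, which rearranges to the claimed formula.

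Next I would address the group-theoretic claim: the stabilizer $O_N$ of $N$ in $O(U)$ acts on $U'$, and the image of $O_N \to O(U')$ is all of $O(U')$. An element $g\in O_N$ preserves $N$, hence preserves $N^\perp$ (since $g$ is an isometry), hence descends to a well-defined linear map on $U'$; that this descended map is an isometry of the descended form is formal. Surjectivity onto $O(U')$ is where Witt's extension theorem enters: given any isometry $g'$ of $U'$, one lifts it using the decomposition $U \cong U' \perp \mathbb{H}^{\oplus \dim N}$ by letting $g'$ act on the $U'$ factor and the identity (or any chosen isometry fixing $N$ setwise) on the hyperbolic part; one then checks the result stabilizes $N$ and induces $g'$. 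This is essentially bookkeeping with the chosen splitting.

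The substantive part is the last assertion: the restriction of $\mu_{U\otimes V}$ to $C_N$, as an $O(U')\times\Sp(V)$-representation, is isomorphic to $\mu_{U'\otimes V}$. My approach is to reduce to the already-proven building blocks rather than recompute Fourier transforms from scratch. Using Corollary~\ref{cor:factorization U}, choose the orthogonal decomposition $U \cong U' \perp W$ where $W \cong \mathbb{H}^{\oplus \dim N}$ and $N$ sits inside $W$ as a maximal isotropic subspace; then $\mu_{U\otimes V} \simeq \mu_{U'\otimes V}\otimes \mu_{W\otimes V}$, with the isomorphism sending $\delta_F \mapsto \delta_{\pi_{U'}F}\otimes\delta_{\pi_W F}$. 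Under this splitting, the defining conditions of $C_N$ in Eq.~(\ref{eqn:css code}) — support in $\Hom(X\to N^\perp)$ and constancy along $\Hom(X\to N)$ — involve only the $W$-component, because $N\subset W$ and $N^\perp = U' \perp N^{\perp_W}$. Hence $C_N$ corresponds under the isomorphism to $L^2(\Hom(X\to U'))\otimes C_N^{(W)}$, where $C_N^{(W)}\subset L^2(\Hom(X\to W))$ is the analogous CSS-type subspace for the hyperbolic space $W$. It therefore suffices to show that $\Sp(V)$ acts trivially on $C_N^{(W)}$ and that $C_N^{(W)}$ is one-dimensional — that is, spanned by a single $\Sp(V)$-invariant vector. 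Decomposing $W$ further into $\dim N$ hyperbolic planes (again Corollary~\ref{cor:factorization U}) and matching $N$ with the diagonal isotropic lines, this one-dimensional invariant vector is precisely $\psi_N := \sum_{F\in\Hom(X\to N)}\delta_F$, and the triviality of the $\Sp(V)$-action on it is exactly part (2) of Lemma~\ref{lem:hyperbolic plane}, applied in each hyperbolic factor. That $C_N^{(W)}$ has no larger $\Sp(V)$-invariant part follows from a dimension/rank count: the $\mathcal{N}$-weight carried by $\delta_F$ is $2^{-1}F^T F$ by Eq.~(\ref{eqn:embedded weight}), which vanishes exactly when $\range F$ is isotropic in $W$; for the hyperbolic $W$ the only maximal isotropic spaces containing the relevant structure force $\range F\subseteq N$, pinning $C_N^{(W)}$ down to the span of $\psi_N$. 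The main obstacle I anticipate is precisely this last pinning-down step — confirming that the CSS constraints on the hyperbolic factor cut out exactly the line $\CC\psi_N$ and not something larger — together with carefully tracking how the $O(U')$-action is carried along the chain of isomorphisms so that the final equivalence is genuinely one of $O(U')\times\Sp(V)$-representations and not merely of $\Sp(V)$-representations; both are conceptually routine but require attention to the identifications in Corollaries~\ref{cor:factorization U} and~\ref{cor:full factorization}.
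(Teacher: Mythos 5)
Your proposal is correct and follows essentially the same route as the paper's proof: the Witt-type splitting $U = \mathbb{H}^{\oplus \dim N}\oplus U'$ with $N$ maximal isotropic in the hyperbolic part, multiplicativity of the discriminant with $d(\mathbb{H})=-1$, surjectivity of $O_N\to O(U')$ by extending isometries by the identity on the hyperbolic factor, the factorization of Corollary~\ref{cor:factorization U}, and Lemma~\ref{lem:hyperbolic plane}(2) applied factorwise to $\psi_N=\sum_{F\in\Hom(X\to N)}\delta_F$. The step you flag as the main anticipated obstacle is in fact immediate: since $N$ is maximal isotropic in $W\cong\mathbb{H}^{\oplus\dim N}$, one has $N^{\perp_W}=N$, so the CSS conditions on the $W$-factor force support in $\Hom(X\to N)$ and constancy there, giving $C_N^{(W)}=\CC\,\psi_N$ directly from the definition.
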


In view of this Lemma, we will say that a tensor power CSS code $C_N$ has \emph{rank} $r$, if it carries a rank-$r$ represetation, or, equivalently, if $\dim N = (t-r)/2$.

\begin{proof}
  Let $\{u_1, \dots, u_k\}$ be a basis of $N$.
  There exist $u_1', \dots, u_k'$ such that
  \begin{align}\label{eqn:system of equations}
	\beta(u_i, u_j') = \delta_{i,j}
  \end{align}
  (because, for each $j$, Eq.~\eqref{eqn:system of equations} is an underdetermined system of linear equations for $u_j'$).
  Then $\mathbb{H}_i = \langle u_i, u_i' \rangle$ is a hyperbolic plane, and we arrive at an orthogonal decomposition
  \begin{align}\label{eqn:nperp decomposition}
	U = \mathbb{H}_1 \oplus\dots\oplus \mathbb{H}_k \oplus U'=:H\oplus U',
  \end{align}
  where $U'=H^\perp$ is the orthogonal complement of the hyperbolic planes.
  Equation~\eqref{eqn:nperp decomposition} implies:
  (1) The discriminant of $U'$ is $d(U')=(-1)^k d(U)$,
  and
  (2)
  the orthogonal complement $N^\perp$ equals $N \oplus U'$, and we thus have $N^\perp/N\simeq U'$.
  Because the form on $U'$ is inherited from the one of $U$, it is clear that $O_N$ acts isometrically on $U$.
  Let $i: O_N \to O(U')$ be the homomorphism that maps elements of $O_N$ to their action on $O(U')$.
  Then $i$ is onto: If $g\in O(U')$, then, using the decomposition~\eqref{eqn:nperp decomposition}, we can embed $g$ as $\id\oplus g$ into $O_N$.

  By Corollary~\ref{cor:factorization U},
  \begin{align*}
	L^2(\Hom(X\to U)) &\simeq
	L^2(\Hom(X\to H))
	\otimes L^2(\Hom(X\to U')),\\
	\mu_{U\otimes V} &\simeq 
  \mu_{H\otimes V}\otimes\mu_{U'\otimes V}.
  \end{align*}
  By Lemma~\ref{lem:hyperbolic plane}, $\mu_{H\otimes V}$ acts trivially on
  \begin{align*}
	\psi_{u_1}\otimes \dots \otimes \psi_{u_k}
    &=\sum_{y_1, \dots, y_k \in X^*} \delta_{u_1\otimes y_1} \otimes \dots \otimes \delta_{u_k\otimes y_k}\\
	&\simeq
	\sum_{F \in \Hom(X\to N)} \delta_F
	\quad\in L^2(\Hom(X\to H)).
  \end{align*}
  Thus
  \begin{align*}
	C_N
	\simeq
	\Big(
	  \sum_{F \in \Hom(X\to N)} \delta_F
	\Big) \otimes L^2(\Hom(X\to U')),
  \end{align*}
  on which $\mu_{U\otimes V}$ acts as $\mu_{U'\otimes V}$.
\end{proof}

In the remainder of this section, we introduce two concepts that will be used in Section~\ref{sec:main} to reconstruct the codes a rank-deficient representation lives on.

A natural orthogonal basis on a tensor power CSS code is given by \emph{coset states}
(the generalization of Eq.~\eqref{eqn:coset powers}).
Given an isotropic subspace $N\subset U$, an $F\in\Hom(X\to N^\perp)$, and a coset
\begin{align*}
  [F]\in\Hom(X\to N^\perp)/\Hom(X\to N)\simeq \Hom(X\to N^\perp / N),
\end{align*}
the associated \emph{tensor power coset state} is
\begin{align*}
  e_{[F]} = \sum_{G\in [F]} \delta_{G} \in C_N.
\end{align*}
We will occassionally write $[F]_N$, if the vector space $N$ is not unambiguously clear from context.
The set
\begin{align}\label{eqn:cn basis}
  \{ e_{[F]} \;|\; [F]\in\Hom(X\to N^\perp / N)\}
\end{align}
is an orthogonal basis for $C_N$.
Note that if $[F]=[F']$, then $F=F'+\Delta$ for some $\Delta\in\Hom(X\to N)$ and thus
\begin{align}\label{eqn:same weight}
  (F')^T F'
  = F^T F +F^T\Delta + \Delta^T F + \Delta^T \Delta = F^T F.
\end{align}
In particular, $e_{[F]}$ carries the $\mathcal{N}$-weight $B=F^T F$.

With each $F\in\Hom(X\to U)$, we associate the isotropic space
\begin{align}\label{eqn:NF}
  N_F = \range F \cap (\range F)^\perp,
\end{align}
which is the radical of the range of $F$.

\begin{lemma}\label{lem:dimension bound}
  Let $F\in\Hom(X\to U)$ be such that $\rank F^T F=r$.
  Then we have the dimension bound
  \begin{align}
		\label{eqn:nf bound}
		\dim N_F \leq \lfloor (t-r)/2 \rfloor.
  \end{align}
\end{lemma}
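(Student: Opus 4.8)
The plan is to translate the statement into a purely geometric one about the subspace $W:=\range F\subseteq U$ together with the restriction of $\beta$ to it, and then to apply the isotropic dimension bound~(\ref{eqn:isotropic dimension bound}) not inside $U$ itself, but inside a suitable non-degenerate subspace of $U$ of dimension $t-r$.

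First I would identify the radical of the form $F^T F$ on $X$. By~(\ref{eqn:b=yty}) this form sends $(x,y)\mapsto \beta(Fx,Fy)$, so its radical is $\{x\mid Fx\in(\range F)^\perp\}=F^{-1}(N_F)$, where $N_F=\range F\cap(\range F)^\perp$ is as in~(\ref{eqn:NF}). Since $N_F\subseteq\range F$, the map $F$ carries $F^{-1}(N_F)$ onto $N_F$ with kernel $\ker F$, so $\dim F^{-1}(N_F)=\dim N_F+\dim\ker F$. Hence
\[
  r=\rank F^T F=\dim X-\dim F^{-1}(N_F)=\dim W-\dim N_F ,
\]
i.e. $\dim W=r+\dim N_F$. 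Next I would choose a complement $W_1$ of $N_F$ inside $W$, so $\dim W_1=r$, and check that $\beta|_{W_1}$ is non-degenerate: a vector of $W_1$ that is $\beta$-orthogonal to all of $W_1$ is also orthogonal to $N_F$ (because $N_F=W\cap W^\perp\subseteq W^\perp$), hence orthogonal to $W=N_F\oplus W_1$, so it lies in the radical of $\beta|_W$, which is $N_F$; being in $W_1\cap N_F=0$ it is zero. Consequently $U=W_1\oplus W_1^\perp$ orthogonally, the restriction $\beta|_{W_1^\perp}$ is non-degenerate, and $\dim W_1^\perp=t-r$. Finally, $N_F$ is isotropic in $U$ (for $u,v\in N_F$ one has $u\in W$, $v\in W^\perp$, so $\beta(u,v)=0$) and $N_F\subseteq W^\perp\subseteq W_1^\perp$ since $W_1\subseteq W$; so $N_F$ is an isotropic subspace of the non-degenerate orthogonal space $W_1^\perp$, and~(\ref{eqn:isotropic dimension bound}) applied there yields $\dim N_F\leq (t-r)/2$. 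As $\dim N_F$ is an integer, this gives the claimed bound $\dim N_F\leq\lfloor(t-r)/2\rfloor$.

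I do not expect a genuine obstacle here; the only point requiring care is the bookkeeping in the first step — correctly identifying the radical of $F^T F$ and its dimension — since that is exactly what converts the hypothesis $\rank F^T F=r$ into the usable identity $\dim\range F=r+\dim N_F$. The conceptual point to keep in mind is that one must \emph{not} apply the isotropic bound to $N_F\subseteq U$ directly, which would only give $\dim N_F\le t/2$, but rather inside the non-degenerate complement $W_1^\perp$ of a maximal non-degenerate part of $\range F$.
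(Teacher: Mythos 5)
Your proof is correct and follows essentially the same route as the paper: you take a complement $W_1$ of $N_F$ inside $\range F$ (the paper's $U_2$), observe it is non-degenerate of dimension $r$, and then apply the isotropic bound~(\ref{eqn:isotropic dimension bound}) to $N_F$ inside the $(t-r)$-dimensional non-degenerate space $W_1^\perp$. The only difference is that you spell out the bookkeeping ($\dim\range F = r + \dim N_F$ and the non-degeneracy of the complement) that the paper asserts ``by construction.''
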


\begin{proof}
  We decompose $U$ as $U_1 \oplus U_2 \oplus U_3$, where $U_1=N_F$, $U_2$ is a complement to $N_F$ in $\range F$,  and $U_3$ a complement to $\range F$ in $U$ (c.f.\ Fig.~\ref{fig:spaces}).
  By construction,  the space $U_2$ is non-degenerate and of dimenesion $r$, which implies that $U_2^\perp$
  is $(t-r)$-dimensional and non-degenerate.
  Thus $N_F=U_1\subset U_2^\perp$ is isotropic and contained in a $(t-r)$-dimensional non-degenerate space, which implies by Eq.~\eqref{eqn:isotropic dimension bound} that $\dim N_F\leq (t-r)/2$.
\end{proof}

\begin{figure}
	\includegraphics{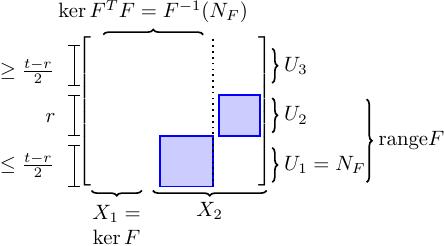} %[width=.5\textwidth]
	\caption{\label{fig:spaces}
	Illustration of the various subspaces we will associate with an $F\in\Hom(X\to U)$.
	In Lemma~\ref{lem:genesis} and in the proof of the Main Theorem, the domain $X$ will be decomposed as a direct sum of $X_1=\ker F$ and some complement $X_2$.
	In Lemma~\ref{lem:dimension bound} and in the proof of the Main Theorem, we decompose $U$ as a direct sum of $U_1=N_F=\range F \cap (\range F)^\perp$;
	$U_2$, some complement of $U_1$ within $\range F$; and $U_3$, some complement of $\range F$.
	These choices decompose $\Hom(X\to U)$ into six different subspaces $\Hom(X_i\to U_j)$, each of which can be visualized as a block in the matrix depicted.
	In Lemma~\ref{lem:genesis}, the map $\Delta$ lives in the lower left hand side block, $\Hom(X_1\to U_1=N_F)$.
	In Lemma~\ref{lem:trolling}, we extend this to elements $\Delta=F-F'$ of the entire lower block $\Hom(X\to U_1)$, subject to a rank constraint.
	In the proof of the Main Theorem, $G$ lives in the left block $\Hom(X_1\to U)$.
	One could further subdivide $X_2$ into $X_2\cap F^{-1}(N_F)$ (left side of the dotted line), and some complement (right side of the dotted line).
	We do not make use of this division in our argument.
	With respect to this choice, $F$ is non-zero exactly on the two shaded blocks (where, in fact, it is invertible).
	}
\end{figure}

\subsection{Fourier transforms}
\label{sec:fourier}

Central to the proof of our main result will be the fact that subrepresentations of the oscillator representation are closed under certain Fourier transforms.
By a \emph{Fourier transform}, we mean a map of the form $\mu(J_B)$ defined in Eq.~\eqref{eqn:J oscillator}, for $B: X^* \to X$ symmetric and invertible.

A standard result from harmonic analysis says that the support of a function is contained in a vector space if and only if its Fourier transform is supported on a (suitably defined) orthogonal complement.
The version we will require below reads:

\begin{lemma}\label{lem:ft invariance lemma}
 Let $B: X^*\to X$ be symmetric and invertible,
 let $\Phi\in L^2(\Hom(X\to U))$, and
 let $U'\subset U$ be a subspace.

 Then the support of $\Phi$ is contained in the space $\Hom(X\to U')$
 if and only if the support of the Fourier transform
 $\tilde\Phi:=\mu_{U\otimes V}(J_B) \Phi$ is contained in $\Hom(X\to {U'}^\perp)$.

 What is more, $\Phi$ is the indicator function on $\Hom(X\to U')$ if and only if $\tilde\Phi$ is the indicator function on $\Hom(X\to{U'}^\perp)$.
\end{lemma}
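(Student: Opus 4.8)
The plan is to reduce the statement to the single-tensor-factor case (a function on $U = X^*$-side with $\Hom(X \to U)$ replaced by one copy of $U$) and then invoke the classical Poisson-summation/support-duality for finite abelian groups. First I would use Corollary~\ref{cor:factorization U} with the orthogonal decomposition $U = U' \oplus (U')^\perp$ — valid since $U'$ is assumed here (implicitly) non-degenerate, or more carefully, reduce first to the non-degenerate case by splitting off the radical. This factors $L^2(\Hom(X\to U)) \simeq L^2(\Hom(X\to U')) \otimes L^2(\Hom(X\to (U')^\perp))$ and, crucially, the Fourier transform $\mu_{U\otimes V}(J_B)$ respects this tensor factorization (each factor gets its own $\mu_{U_i\otimes V}(J_B)$, since $J_B$ with $B:X^*\to X$ acts "diagonally" across the $U$-direction). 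So the claim for $U$ follows once we know: $\Phi$ supported on $\Hom(X\to U')$ corresponds under the isomorphism to $\Phi' \otimes (\text{const})$, i.e. $\Phi'\otimes \mathbbm{1}_{\Hom(X\to (U')^\perp)}$ up to normalization — wait, that is backwards, so let me restate: $\supp\Phi \subset \Hom(X\to U')$ means the second tensor factor is the delta at $0$, and we must show $\mu_{(U')^\perp\otimes V}(J_B)\delta_0 = \gamma\cdot\mathbbm{1}$, i.e. the Fourier transform of a delta function is (a multiple of) the constant function, and conversely.

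The heart of the matter is therefore the computation on a single factor $W\otimes V$ (with $W$ non-degenerate orthogonal): that $\mu_{W\otimes V}(J_B)\delta_0 \propto \mathbbm{1}$ and that $\mu_{W\otimes V}(J_B)$ maps the constant function to a multiple of $\delta_0$. Both are immediate from the explicit formula~(\ref{eqn:J oscillator}): applied on $L^2(\Hom(X\to W))$ it sends $\delta_G \mapsto \gamma(B)^{-1}\sum_{G'} \omega(-2^{-1}\langle B\text{-twisted pairing of }G,G'\rangle)\delta_{G'}$; at $G=0$ the phase is trivial for all $G'$, giving a uniform superposition, and conversely summing over $G$ the phases produce a delta at $0$ by orthogonality of characters of the additive group $\Hom(X\to W)\cong W\otimes X^*$ (using that the pairing is non-degenerate, which holds precisely because $W$ is non-degenerate and $B$ is invertible). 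The second sentence of the lemma ("indicator on $\Hom(X\to U')$ iff indicator on $\Hom(X\to {U'}^\perp)$") is then just keeping track of the normalization constant $\gamma(B)$ — the indicator, not merely its span, is preserved because the Gauss-sum prefactors on the two tensor factors multiply to the Gauss-sum prefactor for $B$ on the full space $U\otimes V$, again via Corollary~\ref{cor:factorization U}.

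For the general (degenerate $U'$) case I would first write $U' = \rad_{U'} \oplus U''$ with $U''$ non-degenerate inside $U$, note ${U'}^\perp = (U'')^\perp \cap (\rad U')^\perp$ and that $\rad U' \subset {U'}^\perp$, so that ${U'}^\perp \supset U'$ is again handled by a three-way orthogonal decomposition $U = U'' \oplus \mathbb{H}_1 \oplus \cdots \oplus \mathbb{H}_k \oplus U'''$ pairing each basis vector of $\rad U'$ with a hyperbolic partner, exactly as in the proof of Lemma~\ref{lem:css representation}; the hyperbolic-plane factors contribute the "constant on cosets / supported on $N^\perp$" duality that is precisely the content of Lemma~\ref{lem:hyperbolic plane}(2) and its Fourier dual. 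The main obstacle I anticipate is bookkeeping rather than conceptual: tracking the several identifications $\Hom(X\to U)^* \cong \Hom(X^*\to U)$, making sure the Fourier transform $\mu(J_B)$ really does factor through Corollary~\ref{cor:factorization U} (one must check that $\Id_U \otimes J_B$, written out in the block form on $(U\otimes X)\oplus(U\otimes X^*)$, is compatible with the symplectic decomposition $U\otimes V = \bigoplus_i U_i\otimes V$), and propagating the Gauss-sum normalization constants so that the sharper "indicator function" statement — not just "support" — comes out correctly.
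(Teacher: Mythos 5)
Your route (split $U=U'\oplus U'^{\perp}$, factor the Fourier transform through Corollary~\ref{cor:factorization U}, and use that $\mu(J_B)$ exchanges $\delta_0$ and the constant function on each factor) is genuinely different from the paper's proof, which works directly on $\Hom(X\to U)$ with the form $\beta_B(F,G)=\tr F^t\beta G B$, identifies the $\beta_B$-annihilator of $\Hom(X\to U')$ as $\Hom(X\to U'^{\perp})$, and finishes by an explicit character-sum (Poisson-summation) and dimension-count argument; your worry about whether $\Id_U\otimes J_B$ respects the factorization is unfounded, since $J_B\in\Sp(V)$ and Corollary~\ref{cor:factorization U} is an $\Sp(V)$-intertwiner. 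The genuine gap is at the final step. What your reduction actually shows is: $\supp\Phi\subseteq\Hom(X\to U')$ iff $\Phi\simeq\Phi'\otimes\delta_0$ under the isomorphism, iff $\tilde\Phi\simeq\tilde\Phi'\otimes\gamma^{-1}\mathbbm{1}$, i.e.\ iff $\tilde\Phi$ is \emph{constant on cosets of} $\Hom(X\to U'^{\perp})$. That is not containment of $\supp\tilde\Phi$ in $\Hom(X\to U'^{\perp})$: the support of $\tilde\Phi'\otimes\mathbbm{1}$ is the whole cylinder $\supp\tilde\Phi'+\Hom(X\to U'^{\perp})$. Your parenthetical ``wait, that is backwards'' is exactly this point, and it got resolved the wrong way; a literal support-for-support statement is unsalvageable (take $\Phi=\delta_0$ and any $U'\neq 0$: then $\tilde\Phi$ is a constant function with full support). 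The duality you did prove --- support in $\Hom(X\to U')$ on one side versus invariance under $\Hom(X\to U'^{\perp})$-translations on the other --- is precisely what the paper's appendix establishes and what is used downstream (in Lemma~\ref{lem:genesis}, $\supp\tilde\phi\subseteq\Hom(X_1\to N_F^{\perp})$ is converted into $\phi$ being constant on cosets of $\Hom(X_1\to N_F)$), so you should state and conclude that version. (The indicator statement likewise only holds up to the scalar $\gamma(B)^{-1}|\Hom(X\to U')|$; your remark that the factor-wise Gauss sums multiply to the full one is correct but does not make this scalar equal to $1$.)

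The second gap is the degenerate case, which is not optional bookkeeping but the case the argument actually needs: the lemma is applied with $U'=N_F^{\perp}$, which is degenerate whenever $N_F\neq 0$, and in the proof of the Main Theorem with the totally isotropic $U'=N_F$ itself. After your decomposition $U=U''\oplus\mathbb{H}_1\oplus\dots\oplus\mathbb{H}_k\oplus U'''$, each hyperbolic factor sees the \emph{intermediate} subspace $\Hom(X\to I_i)$ with $I_i^{\perp}=I_i$ inside $\mathbb{H}_i$, which is neither $\{0\}$ nor all of $\Hom(X\to\mathbb{H}_i)$; the two facts you isolate (the transform of $\delta_0$ is constant, and conversely) do not cover it, and Lemma~\ref{lem:hyperbolic plane}(2) does not supply the missing ingredient either --- it concerns the trivial $\Sp(V)$-action on $\psi_I$, not a Fourier support/invariance duality. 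On such a factor you would have to prove the general support-versus-translation-invariance duality for an arbitrary subspace anyway, at which point you are reproducing the paper's direct argument, which treats arbitrary (possibly degenerate) $U'$ uniformly and needs no case split.
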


Since the proof follows the standard template for such results in harmonic analysis, we have deferred it to Appendix~\ref{app:ft invariance}.

Inspecting the generators in Sec.~\ref{sec:oscillator}, it is clear that only Fourier transforms -- i.e.\ generators from $\mathcal{J} \subset\Sp(V)$ -- can possibly affect the rank of an element $\Phi\in L^2(\Hom(X\to U))$.
This is the reason such maps figure prominently in our argument.
By analyzing the action of Fourier transforms, one can easily derive further statements about the ``rank spectrum'' of representation spaces.
The following proposition is one such example.

\begin{proposition}\label{prop:rank spectrum}
  Let $(\mathcal{K}, \rho_\mathcal{K})$ be an irreducible $\Sp(V)$-subrepresentation of 
  $\mu_{U\otimes V}$, where $\mathcal{K}\subset\mathrm{Hom}(X\to U)$. Let
  \begin{align*}
	   R = \{ \rank B \;|\; B
     \text{ is a weight that appears in $\rho_\mathcal{K}|_\mathcal{N}$} 
     \}
  \end{align*}
  be the set of values the rank takes on the $\mathcal{N}$-spectrum of the representation.
  Then $R$ is a contiguous range of integers.
\end{proposition}

As the rest of the arugment will not rely on Proposition~\ref{prop:rank spectrum}, its proof is given in Appendix~\ref{sec:rank spectrum proof}.

\section{The classification of rank-deficient subrepresentations}
\label{sec:main}

\subsection{Informal outline of the main proof}

Let $\mathcal{K}\subset L^2(\Hom(X\to U))$ be a representation space of rank $r<t$.
We aim to show that there is some $\Phi\in\rep$ can be written as a linear combination
\begin{align}\label{eqn:goal}
  \Phi=\sum_{\substack{N\text{ isotropic}
  }}
  \Phi_N,
\end{align}
of components $\Phi_N$ in suitable tensor power CSS code spaces $C_N$. This, together with Lemma~\ref{lem:css representation}, will imply the Main Theorem.

One of the defining properties of elements $\Phi_N$ of $C_N$ is that they are constant on cosets of $\Hom(X\to N)$.
It is not obvious how one can derive such \emph{invariance properties} from \emph{rank deficiency}.

To achieve this, we rely on the fact that $\mathcal{K}$ is closed under certain Fourier transforms.
More precisely, if we decompose $X$ as a direct sum $X_1\oplus X_2$, then any $F: X\to U$ can be written as the sum of two blocks $F=F_1+F_2$ with $F_i: X_i \to U$ (Fig.~\ref{fig:branches}).
Now fix some $F_2$ and consider the dependency $\phi: F_1 \mapsto \Phi(F_1+F_2)$ of $\Phi$ on the first block alone.
It turns out that rank deficiency imposes linear constraints on the maps $F_1$ that can appear in the support of $\phi$.
But, as we have recalled in Sec.~\ref{sec:fourier}, if the support of a function is contained in a linear subspace, then its Fourier transform is invariant under translations along the orthogonal complement.
Closure of  $\mathcal{K}$ under Fourier transforms then implies invariances of the type that occur in CSS codes for any $\Phi\in\mathcal{K}$.
This first step of recovering a CSS code structure is made precise in Lemma~\ref{lem:genesis}.

\begin{figure}
	\includegraphics{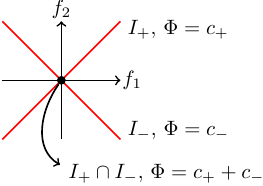} %[width=.35\linewidth]
    \caption{
	  \label{fig:branches}
	  ``Branch and stem'' structure of rank-$0$ subrepresentations of $\mu_{\mathbb{H}\otimes V}$ associated with the hyerpbolic plane.
	  The vectors $f_1, f_2$ denote an orthogonal basis of $\mathbb{H}$.
	  The red lines $I_\pm$ are the two isotropic spaces.
	  A rank-deficient subrepresentation (Eq.~\eqref{eqn:rank def intuition}) of $\mu_{\mathbb{H},V}$ takes values that are constant on the ``branches''
	  $\{ F\;|\; N_F = I_\pm \}$, while the values add up on the ``stem'' $\{0\}$ where the spaces intersect.
    }
\end{figure}

The next challenge we are facing is that $\Phi$ is a linear combination of elements from \emph{different} codes,
so that there is no single space $N$ under which $\Phi$ is invariant.
Indeed, the symmetries found in the first step are only ``local'' in that they depend on the fixed block $F_2$.
To get some feeling for what we can expect, we look at the simplest non-trivial example: $\mu_{\mathbb{H}\otimes V}$ with $\mathbb{H}$ the hyperbolic plane.

The plane has a orthogonal basis $\{f_1, f_2\}$, with
\begin{align*}
  \beta(f_1,f_1)=1, \qquad \beta(f_2,f_2)=d(\mathbb{H})=-1.
\end{align*}
There are two isotropic spaces, $I_\pm = \FF_q\,(f_1 \pm f_2)$ (c.f.\ Fig~\ref{fig:branches}).
It follows that there are two tensor power CSS codes $C_{I_\pm}$ in $\Hom(X\to \mathbb{H})$.
They are one-dimensional, proportional to the vectors $\psi_{I_\pm}$ defined in Lemma~\ref{lem:hyperbolic plane}.
Thus, for $c_\pm\in\CC$, the vector
\begin{align}\label{eqn:rank def intuition}
  \Phi=c_+ \psi_{I_+} + c_- \psi_{I_-} \in L^2(\Hom(X\to\mathbb{H}))
\end{align}
carries a rank-$0$ representation (and we will see that these are the only rank-deficient subrepresentation of $\mu_{\mathbb{H}\otimes V}$).
Using Lemma~\ref{lem:hyperbolic plane}
\begin{align*}
  \Phi(F)=
  \left\{
	\begin{array}{ll}
	  c_+ & \rank F^TF = 0, N_F = I_+ \\ 
	  c_- & \rank F^TF = 0, N_F = I_- \\
	  c_+ + c_-\qquad& \rank F^T F = 0, N_F = \{0\} \\ 
	  0 & \rank F^T F = 1.
	\end{array}
  \right.,
\end{align*}
a situation sketched in Fig.~\ref{fig:branches}.
Embracing a horticultural analogy, $\Phi$ is constant on the two ``branches'' $\{ F\;|\; N_F = I_\pm \}$, while the values add up on the ``stem'' $\{0\}$, where the spaces intersect.

This structure generalizes to higher-dimensional orthogonal spaces $U$.
Define the ``generalized branches'' to be
\begin{align*}
  B_N := \{ F\in \Hom(X\to U)\;|\; \rank F^T F = r, N_F = N \}.
\end{align*}
Then Lemma~\ref{lem:trolling} states that on each $B_N$, a vector $\Phi$ in a rank-deficient representation exhibits the invariance under $\Hom(X\to N)$ that is characteristic of elements of the code $C_N$.
More precisely:
\begin{align*}
  F, F' \in B_N, \quad (F-F')\in\Hom(X\to N) \qquad\Rightarrow\qquad \Phi(F)=\Phi(F').
\end{align*}
Thus $\Phi$ is well-defined on sets $B_N/\Hom(X\to N)$.

After this, we ``prune off the branches'' by setting
\begin{align*}
  \Phi':=\Phi-\sum_{\substack{N\text{ isotropic}\\\dim N=\lfloor(t-r)/2\rfloor}}\,\sum_{[F]\in B_N/\Hom(X\to N)} \Phi([F])\, e_{[F]}.
\end{align*}
The right-hand summand involves the coset states $e_{[F]}$, which are elements of the respective code $C_N$.
The support of the remainder $\Phi'$ is thus contained in the ``stem''.
We conclude the argument by showing that representations with rank $< t$ do not contain
non-zero vectors supported on such a stem, so in fact $\Phi'=0$.

This final step again relies on Fourier transforms.
Roughly, the ``stem'' is a ``small'' space, so that by the uncertainty principle, Fourier 
transforms will have ``large'' support -- so large, in fact, that they are guaranteed to 
contain higher-rank elements.

\subsection{Proof of the Main Theorem}

\begin{lemma}\label{lem:genesis}
  Let $\rep\subset\mathrm{Hom}(X\to U)$ be a subrepresentation of rank $r<t$.
  Let $\Phi\in\rep$ and $F\in\supp\Phi$ such that $\rank F^T F =r$, and let $N_F$ be as
  in eqn.~\eqref{eqn:NF}. If $\Delta\in\Hom(X\to N_F)$ is such that
  \begin{align}\label{eqn:full rank}
	\range F_{|\ker \Delta}= \range F,
  \end{align}
  then
  \begin{align*}
	\Phi(F)=\Phi(F+\Delta).
  \end{align*}
\end{lemma}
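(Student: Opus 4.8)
The plan is to restrict $\Phi$ to a slice of $\Hom(X\to U)$ adapted to $F$ and $\Delta$, to observe that rank deficiency forces this slice to be supported on a proper linear subspace, and then to turn that support condition into the claimed translation invariance via the Fourier dictionary of Lemma~\ref{lem:ft invariance lemma}. Concretely, I would first put $X_1=\ker F$. The hypothesis $\range F_{|\ker\Delta}=\range F$ says precisely that $F$ maps $\ker\Delta$ onto $\range F$, so $\ker\Delta$ contains a complement $X_2$ of $X_1$ which $F$ carries isomorphically onto $\range F$; then $X=X_1\oplus X_2$. With respect to this splitting (cf.\ Fig.~\ref{fig:spaces}), $F$ lives in the block $\Hom(X_2\to U)$ because $F|_{X_1}=0$, while $\Delta$ lives in the block $\Hom(X_1\to N_F)$ because $X_2\subseteq\ker\Delta$. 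Identifying $\Hom(X\to U)=\Hom(X_1\to U)\oplus\Hom(X_2\to U)$ and defining $\psi\in L^2(\Hom(X_1\to U))$ by $\psi(G):=\Phi(G+F)$, we get $\psi(0)=\Phi(F)$ and $\psi(\Delta)=\Phi(F+\Delta)$, so it suffices to prove $\psi(\Delta)=\psi(0)$.

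The crux is the next step. If $G\in\supp\psi$ then $G+F\in\supp\Phi$, hence $\rank\big((G+F)^{T}(G+F)\big)\le r$ because $\rep$ has rank $r$. In the block form of $(G+F)^{T}(G+F)$ attached to $X=X_1\oplus X_2$, the lower-right block equals $(F|_{X_2})^{T}(F|_{X_2})$, and since $F$ vanishes on $X_1$ this block has rank $\rank(F^{T}F)=r$. A matrix of rank $r$ whose lower-right block already has rank $r$ must have every column indexed by $X_1$ in the span of those indexed by $X_2$; spelling this out for $(G+F)^{T}(G+F)$ and using that $\beta$ is non-degenerate gives $\range G\subseteq\range F+(\range F)^{\perp}=N_F^{\perp}$, so $\supp\psi\subseteq\Hom(X_1\to N_F^{\perp})$. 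I expect this to be the main obstacle, since it manufactures a \emph{linear} constraint out of a single \emph{rank} inequality; the choice $X_1=\ker F$ is exactly what makes it work, as it forces the $X_2$-block of $F$ to be injective and to carry the full rank $r$.

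Finally I would Fourier transform in the $X_1$-directions only. Let $V_1=X_1\oplus X_1^{*}$, pick a symmetric invertible $B_1\colon X_1^{*}\to X_1$, and let $\mathcal F_1=\mu_{U\otimes V_1}(\Id_U\otimes J_{B_1})$ act on $L^2(\Hom(X_1\to U))$ as in Eq.~(\ref{eqn:J oscillator}). Applying Lemma~\ref{lem:ft invariance lemma} to the symplectic space $U\otimes V_1$ and the subspace $N_F^{\perp}\subseteq U$ converts the support statement for $\psi$ into $\supp(\mathcal F_1\psi)\subseteq\Hom\big(X_1\to(N_F^{\perp})^{\perp}\big)=\Hom(X_1\to N_F)$. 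Conjugating the translation $G\mapsto G+\Delta$ by $\mathcal F_1$ yields multiplication by a character $\chi_\Delta$ of $\Hom(X_1\to U)$, and from the explicit Fourier kernel in Eq.~(\ref{eqn:J oscillator}) one reads off $\chi_\Delta(H)=\omega\big(-2^{-1}\tr H^{T}\Delta\, B_1\big)$, which equals $1$ whenever $\range H\subseteq N_F$ because then $H^{T}\Delta=0$ by the isotropy of $N_F$. Hence $\chi_\Delta$ is trivial on $\supp(\mathcal F_1\psi)$, so translation by $\Delta$ fixes $\mathcal F_1\psi$ and therefore fixes $\psi$. In particular $\Phi(F+\Delta)=\psi(\Delta)=\psi(0)=\Phi(F)$, which is the assertion.
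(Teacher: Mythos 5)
Your setup coincides with the paper's: the splitting $X=X_1\oplus X_2$ with $X_1=\ker F$ and $X_2\subseteq\ker\Delta$, the slice $\psi(G)=\Phi(G+F)$, and the deduction that any $G$ with $G+F$ in a rank-$\le r$ support must satisfy $\range G\subseteq N_F^\perp$ (your block-matrix argument for this is correct and is essentially the paper's step). The proof breaks at the Fourier step. Lemma~\ref{lem:ft invariance lemma}, as it is actually proved and used, is a support/\emph{invariance} duality: $\supp\psi\subseteq\Hom(X_1\to W)$ is equivalent to $\mathcal{F}_1\psi$ being invariant under translations by $\Hom(X_1\to W^\perp)$; it does not give $\supp(\mathcal{F}_1\psi)\subseteq\Hom(X_1\to W^\perp)$, and that stronger claim is simply false (take $\psi=\delta_0$: it is supported in every subspace, while its transform is the constant function with full support). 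So your inference $\supp(\mathcal{F}_1\psi)\subseteq\Hom(X_1\to N_F)$ does not follow, and with it the conclusion that $\chi_\Delta$ is trivial on $\supp(\mathcal{F}_1\psi)$ collapses. The gap is not locally repairable: your argument only ever uses that every element of $\supp\Phi$ has rank at most $r$, and never uses that $\rep$ is an $\Sp(V)$-invariant subspace. Under that weaker hypothesis the statement is false --- e.g.\ $\Phi=\delta_F$ with $\ker F\neq 0$, $N_F\neq 0$ and $\rank F^TF=r$ satisfies everything you invoke, yet $\Phi(F)=1\neq 0=\Phi(F+\Delta)$ for a nonzero admissible $\Delta$ supported on $\ker F$.

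The missing idea, which is the heart of the paper's proof, is to apply the rank constraint not to $\Phi$ but to its partial Fourier transform $\tilde\Phi=\big(\mu_{U\otimes V_1}(J_B)\otimes\mu_{U\otimes V_2}(\Id)\big)\Phi$. Since this operator is the image under $\mu_{U\otimes V}$ of an element of $\Sp(V_1)\times\Sp(V_2)\subset\Sp(V)$ (Lemma~\ref{lem:factorization general}), $\tilde\Phi$ again lies in $\rep$, so \emph{its} support also consists of maps of rank at most $r$. Running your block argument on $\tilde\Phi$ then yields a support constraint on the transform of the slice, $\supp(\mathcal{F}_1\psi)\subseteq\Hom(X_1\to N_F^\perp)$, and the correct direction of Lemma~\ref{lem:ft invariance lemma} converts exactly this into invariance of $\psi$ under translations by $\Hom(X_1\to N_F)$, which contains $\Delta$. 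Note also that the invariance you need corresponds, under the duality, to the transform being supported in $\Hom(X_1\to N_F^\perp)$, not in $\Hom(X_1\to N_F)$; the information you derived about $\psi$ itself (support in $\Hom(X_1\to N_F^\perp)$) only tells you that $\mathcal{F}_1\psi$ is translation-invariant along $\Hom(X_1\to N_F)$, which says nothing about where it vanishes and hence cannot close the argument.
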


\begin{proof}
  Set $X_1 = \ker F$.
  The assumption~\eqref{eqn:full rank} implies that there is a complement $X_2$ of $X_1$ contained in $\ker\Delta$.
  This choice induces a decomposition $\Hom(X\to U)=\Hom(X_1\to U)\oplus\Hom(X_2\to U)$ with $\Delta\in\Hom(X_1\to U)$, $F\in\Hom(X_2\to U)$.

  Let $B: X_1^* \to X_1$ be invertible, let $\mu_{U\otimes V_1}(J_B)$ be the associated Fourier transform, and let $i$ be the isomorphism~\eqref{eqn:isomorphism factorization}.
  By Section~\ref{sec:fourier} and Lemma~\ref{lem:factorization general}, the vector
	\begin{align*}
		\tilde \Phi:=\big(\,i^{-1}\,(\mu_{U\otimes V_1}(J_B)\otimes \mu_{U\otimes V_2}(\Id))\, i\,\big) \,\Phi
	\end{align*}
  is an element of $\rep$.
  Thus, by the assumption on the rank of the representation, $\tilde\Phi$ has support only on maps $F'\in\Hom(X\to U)$ with $\rank (F')^T F' \leq r$.

  If $F'=G+F$ for some $G\in\Hom(X_1\to U)$, then
  \begin{align*}
    \range (G+F)
	=
    \langle \range G\cup\range F\rangle.
  \end{align*}
  The condition
  $\rank(G+F)^T(G+F) \leq r$
  is equivalent to demanding that $\range (G+F)$ has rank at most $r$ as an orthogonal space.
  This implies
  \begin{align}\label{eqn:in nf}
	\range G\subset \langle \range F \cup (\range F)^\perp \rangle = N_F^\perp.
  \end{align}

  Set
  \begin{align*}
	\phi \in L^2(X_1\to U),
	\qquad
	\phi(G) = \Phi(G+F),
	\qquad
	\tilde\phi = \mu_{U\otimes V_1}(J_B)\phi.
  \end{align*}
  Then
  \begin{align*}
	\tilde\Phi(G+F)
	=
	\tilde \phi(G),
  \end{align*}
  so that the preceding discussion implies that $\supp\tilde\phi\subset \Hom(X\to N_F^\perp)$.
  Thus Lemma~\ref{lem:ft invariance lemma} implies that $\phi$ is constant on cosets of
  $\Hom(X_1\to N_F)$, a space which includes $\Delta$.
\end{proof}

The next lemma extends the invariances -- essentially by using the fact that there is a some freedom in choosing the complement $X_2$ to $\ker F$ that appears in the proof above.

\begin{lemma}\label{lem:trolling}
  Let $\rep$ be a representation of rank $r<t$, and $\Phi\in\rep$.
  Let $N\subset U$ be an isotropic space, and set
  \begin{align*}
	B_N := \{ F\in \Hom(X\to U)\;|\; \rank F^T F = r, N_F = N \}.
  \end{align*}
  Then on $B_N$, $\Phi$ is invariant under $\Hom(X\to N)$:
  \begin{align}\label{eqn:trolling f f'}
	\Phi(F) = \Phi(F')\qquad \forall\,F, F' \in B_N, (F-F')\in\Hom(X\to N).
  \end{align}
\end{lemma}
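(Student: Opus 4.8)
The plan is to deduce Lemma~\ref{lem:trolling} from Lemma~\ref{lem:genesis} by exploiting the freedom in the choice of the complement $X_2$ to $\ker F$. Let $F, F' \in B_N$ with $\Delta := F - F' \in \Hom(X \to N)$. The difficulty compared to Lemma~\ref{lem:genesis} is that there $\Delta$ was supported on $\ker F$, i.e.\ living only in the block $\Hom(X_1 \to N_F)$, whereas now $\Delta$ may be nonzero on all of $X$ (the entire lower block $\Hom(X \to U_1)$ in Fig.~\ref{fig:spaces}), subject only to the constraint that both endpoints have the same radical $N$. So Lemma~\ref{lem:genesis} does not apply directly; the idea is to connect $F$ to $F'$ by a short path inside $B_N$ along which each step \emph{is} of the form covered by Lemma~\ref{lem:genesis}, possibly after also applying a $\mathcal{D}$-type group element of $\Sp(V)$ (which preserves $\rep$ and acts on the $X$-side by $\delta_G \mapsto \delta_{GC}$ for $C \in \GL(X)$), since such elements do not change $N_F$ and can be used to move the kernel of the relevant perturbation.

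Concretely, I would first reduce to the case where $F$ and $F'$ have the same kernel. Write $X_1 = \ker F$ and pick a complement $X_2$. Since $\Delta$ maps into $N = N_F \subset \range F$, and the restriction of $F$ to $X_2$ is an isomorphism onto $\range F$, there is a unique $E \in \End(X_2)$ with $F|_{X_2} \circ E = \Delta|_{X_2}$; extend it to $X$ by defining a map $C := \Id - (\text{this perturbation})$, and check that $C \in \GL(X)$ precisely because the perturbation is nilpotent relative to the decomposition (it lowers the ``$X_2$ into $\range F$'' part into $N$, and squares to zero after composing with $F$ in the right order). Replacing $F'$ by $F'' := F' C^{-1}$, which lies in the same $\Sp(V)$-orbit of values of $\Phi$ via a $D_C$-element, one arranges $F'' = F + \Delta''$ where $\Delta'' \in \Hom(X_1 \to N)$, i.e.\ $\Delta''$ is now supported on $\ker F$. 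One must also verify $\ker F'' = \ker F$ and $N_{F''} = N$, so that $F'' \in B_N$; the first holds by construction and the second because $\range F'' \subseteq \langle \range F \cup N \rangle = \range F$ forces equality of ranges (dimension count), hence equality of radicals.

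Now with $\Delta'' \in \Hom(\ker F \to N_F)$, I would like to invoke Lemma~\ref{lem:genesis}, but its hypothesis~(\ref{eqn:full rank}) requires $\range F|_{\ker \Delta''} = \range F$, which may fail if $\Delta''$ is ``too spread out'' over $X_1 = \ker F$. The remedy is again the freedom in $X_2$: since $\Delta''$ lands in $N_F \subseteq \range F$ and $\range F = \range F|_{X_2}$ automatically (as $X_2 \cap \ker F = 0$ and $F$ is full rank on $X_2$), hypothesis~(\ref{eqn:full rank}) holds for any $\Delta''$ supported on $\ker F$ — because $\ker \Delta'' \supseteq X_2$ already. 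So Lemma~\ref{lem:genesis} applies directly and gives $\Phi(F) = \Phi(F + \Delta'') = \Phi(F'')$. Combining with $\Phi(F'') = \Phi(F')$ from the $D_C$ step yields the claim.

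The main obstacle I expect is the bookkeeping in the reduction step: constructing the element $C \in \GL(X)$ that transforms $F'$ into $F + \Delta''$ with $\Delta''$ supported on $\ker F$, and carefully verifying that (i) $C$ is indeed invertible, (ii) the transformation realizes an element of $\rep$ (so that values of $\Phi$ are genuinely preserved), and (iii) the transformed map stays in $B_N$ — in particular that the radical is unchanged, which requires that perturbing $F$ by something valued in its own radical cannot enlarge the range. Once the path inside $B_N$ is set up correctly, each individual leg is an immediate application of Lemma~\ref{lem:genesis} or of the group action, so no further genuinely new input is needed.
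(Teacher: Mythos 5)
There is a genuine gap, and it sits exactly at the step your reduction leans on: ``$\Phi(F'')=\Phi(F')$ from the $D_C$ step.'' The element $\mu_{U\otimes V}(D_C)$ preserves the \emph{subspace} $\rep$, but it does not fix the particular vector $\Phi$; it sends $\Phi$ to a different element $\tilde\Phi=\mu(D_C)\Phi\in\rep$ with $\tilde\Phi(G)=\pm\,\Phi(GC)$. So the group action relates values of $\tilde\Phi$ to values of $\Phi$, not values of $\Phi$ at $F'$ and $F'C^{-1}$ — the statement you are proving is an invariance of the fixed function $\Phi$, and for that the $D_C$ move gives nothing. Worse, this cannot be repaired by applying Lemma~\ref{lem:genesis} to $\tilde\Phi$ instead: the hypotheses of Lemma~\ref{lem:genesis} are $\GL(X)$-equivariant (if $F\mapsto FC$, $\Delta\mapsto\Delta C$, then $N_F$, $\rank F^TF$, membership $\Delta\in\Hom(X\to N_F)$ and the condition $\range F_{|\ker\Delta}=\range F$ are all unchanged), so translating the conclusion for $\tilde\Phi$ back to $\Phi$ reproduces exactly the family of invariances Lemma~\ref{lem:genesis} already gives for $\Phi$ directly. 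Hence conjugating by $\mathcal{D}$-elements can never extend the invariance from perturbations satisfying Eq.~(\ref{eqn:full rank}) to an arbitrary difference $F-F'\in\Hom(X\to N)$ between two points of $B_N$ — which is precisely the content of Lemma~\ref{lem:trolling} beyond Lemma~\ref{lem:genesis}. (There is also a secondary problem in the construction of $C$: solving $F'|_{X_2}E'=\Delta|_{X_2}$ needs $\range\Delta|_{X_2}\subseteq\range F'|_{X_2}$, which can fail because $X_2$ is a complement of $\ker F$, not of $\ker F'$.)

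The missing idea is the paper's midpoint argument. One picks a random $\Delta\in\Hom(X\to N)$ and sets $G=F+\Delta$; a counting estimate shows that with probability strictly larger than $1-\tfrac1{q-1}$ the transversality condition $\range F_{|\ker\Delta}=\range F$ holds, and since $\Delta'=F'-G$ is also uniformly distributed, the same bound applies to the pair $(F',\Delta')$. By the union bound both legs satisfy the hypotheses of Lemma~\ref{lem:genesis} simultaneously with positive probability, whence $\Phi(F)=\Phi(G)=\Phi(F')$. Some device of this kind (probabilistic or an explicit construction of a good midpoint) is genuinely needed; your proposal, as written, bypasses it with a group-action step that does not do what is claimed.
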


The proof uses the \emph{probabilistic method} \cite{alon2004probabilistic}: The strategy is to ascertain the (deterministic) exsistence of an object by showing that a randomized construction yields one with positive probability.
Presumably an explicit construction would offer us more insight into the structure of the problem.
We leave such a \emph{derandomization} for future work.

\begin{proof}
  Let $F, F'$ be as in Eq.~\eqref{eqn:trolling f f'}.
  The aim is to show that there exists a ``mid-point'' $G$ such that both $F$ with $\Delta=(G-F)$, as well as $F'$ with $\Delta'=(F'-G)$ fulfill the assumptions of Lemma~\ref{lem:genesis}.
  It then follows that $\Phi(F)=\Phi(G)=\Phi(F')$.

  We claim that if $\Delta$ is chosen uniformly at random from $\Hom(X\to N)$,
  then, with probability strictly larger than $1-\frac1{q-1}$, it holds that
  $\range F_{|\ker\Delta}=\range F$, i.e.\ Lemma~\ref{lem:genesis} applies to $F, \Delta$.

  Before turning to the analysis of the randomized procedure, we state two preparatory facts.
  First, for each subspace $Z\subset X$, it holds that
  \begin{align}\label{eqn:trolling criterion}
	\range F_{|Z} = \range F
	\qquad\Leftrightarrow\qquad
	\dim Z -\dim (Z \cap \ker F) = \rank F.
  \end{align}
  Second, for each $\Delta\in\Hom(X\to N)$, Lemma~\ref{lem:dimension bound} gives the dimension bound
  \begin{align}\label{eqn:rank bound}
    \dim \ker \Delta
	\geq n - \dim N % shape of Delta
	\geq t - \dim N % stable range
	\geq t - (t-r)/2 % dim bound lemma
	= r + (t-r)/2
    \geq \rank F. % dim bound lemma
  \end{align}

  Now assume $\Delta$ is distributed uniformly at random.
  From the previous equation, any $\rank F$-dimensional subspace $Z$ will occur within $\ker\Delta$ with equal probability.
  By Eq.~\eqref{eqn:trolling criterion}, if $\dim(Z\cap \ker F)=0$ for some such $Z$, then the assumption of Lemma~\ref{lem:genesis} is met.

  There are $(q^k-1)/(q-1)$ one-dimensional spaces in a $k$-dimensional vector space.
  Thus, the probability that any fixed one-dimesional subspace is contained in a randomly chosen $z$-dimensional one is $(q^z-1)/(q^n-1)$.
  By the union bound, the probability that at least one element of a fixed $(n-z)$-dimensional space is contained in a $z$-dimensional random one is therefore upper-bounded by
  \begin{align*}
	\frac{q^{n-z}-1}{q-1}\frac{q^{z}-1}{q^n-1}
	=
	\frac1{q-1}
	\frac{(q^{n}-q^{z}-q^{n-z}+1)}{q^n-1}
	<
	\frac1{q-1} \quad (\forall\,z\leq t).
  \end{align*}
  This establishes the claim made at the beginning of the proof.

  Now set $G=F+\Delta$.
  The distribution of $\Delta'=F'-G = (F'-F)-\Delta$ is the same as the distribution of $\Delta$.
  Thus Lemma~\ref{lem:genesis} applies to $F', \Delta'$ with the same probability.

  We conlcude by the union bound that the probability of the construction working in both cases simultaneously is strictly larger than $1-\frac2{q-1}\geq 0$.
\end{proof}

\begin{proof}[Proof (of the Main Theorem)]

  Let $\Phi\in\rep$ carry an $\mcal{N}$-weight $B$ of rank $r$.
  By Lemma~\ref{lem:genesis} and Lemma~\ref{lem:trolling}, $\Phi$ is well-defined on cosets $B_N/\Hom(X\to N)$.
  Set
  \begin{align}
	\label{eqn:cropped}
    \Phi':=\Phi-\sum_{\substack{N\text{ isotropic}\\\dim N=\lfloor(t-r)/2\rfloor}}\,\sum_{[F]\in B_N/\Hom(X\to N)} \Phi([F])\, e_{[F]}.
  \end{align}

  We will prove that $\Phi'$ is actually equal to zero, using a Fourier-transform argument as in Lemma~\ref{lem:genesis}.

  For the sake of reaching a contradiction, assume that $\Phi'\neq 0$ and choose an $F\in\supp \Phi'$ such that
  \begin{align}\label{eqn:main proof max rank}
	\rank F = \max_{F'\in\supp \Phi'} \rank F'.
  \end{align}

  As in the proof of Lemma~\ref{lem:genesis}, set $X_1=\ker F$, choose some complement $X_2$ to $X_1$,
  an invertible symmetric $B: X_1^*\to X_1$,
  set $V_i=X_i\oplus X_i^*$,
  and define $\phi', \tilde\phi'\in L^2(X_1\to U)$ as
	\begin{align*}
		\phi'(G) := \Phi'(F+G),
		\qquad
		\tilde\phi' := \mu_{U\otimes V_1}(J_B)\phi'.
	\end{align*}
  Then,  with $\tilde\Phi' := (\mu_{U\otimes V_1}(J_B)\otimes \mu_{U\otimes V_2}(\Id))\Phi'  \in \mathcal{K}$, it holds that
  \begin{align}\label{eqn:main proof ft domain}
		\tilde\Phi'(F+G)=\tilde\phi'(G) \quad \forall G\in\Hom(X_1\to U).
  \end{align}

  We decompose $U$ as $U_1 \oplus U_2 \oplus U_3$, where $U_1=N_F$, $U_2$ is a complement to $N_F$ in $\range F$,  and $U_3$ a complement to $\range F$ in $U$ (c.f.\ Fig.~\ref{fig:spaces}).
  Let $G=G_1\oplus G_2\oplus G_3$, $G_i\in\Hom(X_1\to U_i)$ be an element of $\supp\phi'$.
  Because $\beta$ restricted to $U_2$ is non-degenerate, it follows from $(G+F)^T (G+F)=B$ that $G_2=0$.
  By Eq.~\eqref{eqn:main proof max rank}, $G_3=0$.
  From Lemma~\ref{lem:genesis}, $\phi'$ is invariant under $\Hom(X_1 \to N_F)=G_1$.
  Thus $\phi'$ is proportional to the indicator function on $\Hom(X_1\to N_F)$.
  Hence $\tilde\phi'$ is proportional to the indicator function on $\Hom(X_1 \to N_F^\perp)$.
  But $N_F^\perp$ contains the $r$-dimensional non-degenerate space $U_2$ and has dimension $\dim N_F^\perp > t-\lfloor (t-r)/2 \rfloor \geq r+(t-r)/2$.
  Therefore, as an orthogonal space, $N_F^\perp$ has rank strictly larger than $r$
  and hence contains a non-isotropic vector $u\not\in\range F$.
  If $G\in\Hom(X_1\to N_F^\perp)$ has $u$ in its range, then $\rank (G+F)^T (G+F) \geq r+1$.
  But by Eq.~\eqref{eqn:main proof ft domain}, $G+F$ appears in the support of $\tilde\Phi'$, contradicting the assumption that $\mathcal{K}$ has rank $r$.

  It follows that $\Phi$ is in the span of the rank-$r$ tensor power CSS codes $\mcal{C}_r$.
  If $\rep$ is irreducible, then it is spanned by the orbit $\symp(V)\cdot\Phi$.
  But Lemma \ref{lem:css representation} says that $\mcal{C}_r$ is invariant under the $\symp(V)$ action, so $\rep\subseteq\mcal{C}_r$.
  By the same lemma, $t-r$ is even.
  The Main Theorem therefore holds for irreps, and hence for all representations.
\end{proof}

\subsection{The connection to the $\eta$ correspondence}
\label{sec:eta}

Here, we will combine the respective main results of this work and of Ref.~\cite{gh17} to arrive at a complete decomposition of\\$L^2(\Hom(X\to~U))$ in terms of irreducible $\Sp(V)$ subrepresentations.

One can generate $\Sp(V)$-subrepresentations by choosing an isotropic subspace $N$ and a $\tau\in\mathrm{Irr}(O(N^\perp/N))$, and then embeddeding $\eta(\tau)$ into the code $C_N$.
Isotropic spaces of the same dimension will give rise to isomorphic $\Sp(V)$-representations.
The main observation of the next lemma is that, while in general different CSS codes may have non-trivial intersections, the representation spaces arising in the way just described  are linearly independent.
This allows us to identify the joint action of $U(O)$ and $\Sp(V)$ on their span as a certain induced representation.

Recall that by Lemma~\ref{lem:css representation}, there is a homomorphism $i: O_N \to O(N^\perp/N)$ from the stabilizer group $O_N\subset O(U)$ of an isotropic subspace onto the orthogonal group of $N^\perp/N$.
Thus, if $\tau$ is a representation of $O(N^\perp/N)$, then $\tau\circ i$ represents $O_N$.
In this section, we will implicitly make this identification and we will not distinguish notationally between $\tau$ and $\tau\circ i$.

\begin{lemma}\label{lem:induced}
  Let $N\subset U$ be an isotropic space
  and let $\tau\in\mathrm{Irr}(N^\perp/N)$.

  Let $\mathcal{K}\subset L^2(\Hom(X\to U))$ be the subspace on which $\Sp(V)$ acts as $\eta(\tau)$.
  Then, as an $O(U)\times \Sp(V)$-representation,
  \begin{align}\label{eqn:induced lemma}
	\mathcal{K}
	\simeq
	\mathrm{Ind}_{O_N}^{O(U)}(\tau)
	\otimes
	\eta(\tau).
  \end{align}
\end{lemma}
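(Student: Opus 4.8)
The plan is to build an explicit $O(U)\times\Sp(V)$-equivariant isomorphism realizing~(\ref{eqn:induced lemma}), using the concrete description of $\eta(\tau)$ as living inside $C_N$ that follows from Lemma~\ref{lem:css representation} and Theorem~\ref{thm:gurevich}. First I would fix a representative isotropic space $N$ and let $\mathcal{K}_N\subset C_N$ be the copy of $\eta(\tau)$ inside the code $C_N$: by Lemma~\ref{lem:css representation}, $C_N\simeq\mu_{U'\otimes V}$ as an $O(U')\times\Sp(V)$-representation with $U'=N^\perp/N$, and under this identification Theorem~\ref{thm:gurevich} singles out a unique rank-$r$ irreducible $\Sp(V)$-summand $\eta(\tau)$ inside $\Theta(\tau)$, which carries the $O(U')$-action $\tau$. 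So $\mathcal{K}_N\simeq\tau\otimes\eta(\tau)$ as an $O_N\times\Sp(V)$-representation (using $i:O_N\twoheadrightarrow O(U')$ to pull $\tau$ back). Now for each coset $gO_N$ in $O(U)/O_N$ — equivalently, for each isotropic space $N'=gN$ of the same dimension as $N$, by Witt's lemma — the translate $g\cdot\mathcal{K}_N=\mathcal{K}_{N'}\subset C_{N'}$ is another copy of $\eta(\tau)$, and these are permuted by $O(U)$ exactly as $O(U)$ permutes the cosets. This is precisely the data of an induced representation: the map $\mathrm{Ind}_{O_N}^{O(U)}(\mathcal{K}_N)\to L^2(\Hom(X\to U))$ sending an elementary tensor $g\otimes v$ to $g\cdot v$ is $O(U)$-equivariant by construction and $\Sp(V)$-equivariant because the $O(U)$- and $\Sp(V)$-actions commute, and its image is $\mathcal{K}=\sum_{N'}\mathcal{K}_{N'}$, the isotypic piece of $\eta(\tau)$.

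The substance of the argument is showing this map is injective, i.e.\ that the spaces $\{\mathcal{K}_{N'}\}$ are linearly independent (they need not be orthogonal, since the codes $C_{N'}$ themselves are not). I would argue this by $\mathcal{N}$-weights. By Eq.~(\ref{eqn:same weight}) and the surrounding discussion, a coset state $e_{[F]_{N'}}$ carries the $\mathcal{N}$-weight $B=F^TF$, whose radical-range is governed by $N_F$; more precisely the weights appearing in $\mathcal{K}_{N'}$ are the rank-$r$ symmetric $B=F^TF$ with $\range F$ an $r$-dimensional non-degenerate space whose perp contains $N'$, and the weight space decomposes according to which isotropic space $N_F$ equals. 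The key point, already essentially isolated in Lemma~\ref{lem:trolling} and the proof of the Main Theorem, is that within a single rank-$r$ weight space the "branch" decomposition indexed by the isotropic spaces $N_F$ is a \emph{direct sum}, and a vector of $\mathcal{K}_{N'}$ is supported (as far as that weight space is concerned) on the single branch $B_{N'}$ together with lower-dimensional "stems" where branches meet. A linear dependence $\sum_{N'}\psi_{N'}=0$ with $\psi_{N'}\in\mathcal{K}_{N'}$ would then, projected onto a generic rank-$r$ weight space and restricted to the branch $B_{N'_0}$ for a fixed $N'_0$ of maximal dimension among those occurring, force $\psi_{N'_0}$ to vanish on that branch; combined with $\Sp(V)$-irreducibility of $\eta(\tau)$ (so that $\psi_{N'_0}$ is determined by its behaviour on any single nonzero weight component) this forces $\psi_{N'_0}=0$, and one induces downward. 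Alternatively — and this may be cleaner — I would invoke the Main Theorem directly: any $\Sp(V)$-subrepresentation of rank $r$ lies in $\mathcal{C}_r=\sum_{\dim N'=(t-r)/2}C_{N'}$, and a dimension count via the orthogonal basis of coset states $\{e_{[F]}\}$ for each $C_{N'}$, together with the fact that two distinct codes $C_{N'},C_{N''}$ of the same rank intersect only in coset states supported on $N'+N''$ (which are lower-rank hence outside $\mathcal{K}$), pins down $\dim\mathcal{K}=|O(U)/O_N|\cdot\dim(\tau\otimes\eta(\tau))=\dim\mathrm{Ind}_{O_N}^{O(U)}(\tau)\cdot\dim\eta(\tau)$, matching the right-hand side.

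I expect the linear-independence step to be the main obstacle: everything else (equivariance, identifying the image, the description of $\mathcal{K}_N$ via Lemma~\ref{lem:css representation} and Theorem~\ref{thm:gurevich}) is bookkeeping, but ruling out "accidental" cancellations among translates of $\eta(\tau)$ living in overlapping codes requires genuinely using the branch/stem geometry — the same phenomenon that drove the proof of the Main Theorem. A secondary subtlety is checking that the $O_N$-action on $\mathcal{K}_N$ really is $\tau\circ i$ and not some twist: this follows because, under the isomorphism $C_N\simeq\mu_{U'\otimes V}$ of Lemma~\ref{lem:css representation}, the $O_N$-action is intertwined with the $O(U')$-action defining $\Theta$, and $\eta(\tau)$ sits in the $\tau$-isotypic component $\tau\otimes\Theta(\tau)$ of~(\ref{eqn:Theta}) by definition of $\eta$. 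Once independence is in hand, Eq.~(\ref{eqn:induced lemma}) follows by comparing the explicit equivariant surjection with its injectivity.
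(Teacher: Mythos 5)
Your architecture is the same as the paper's: realize $\mathcal{K}_N\simeq\tau\otimes\eta(\tau)$ inside $C_N$ via Lemma~\ref{lem:css representation} and Theorem~\ref{thm:gurevich}, translate by representatives of $O(U)/O_N$ (equivalently, over the isotropic Grassmannian), use the Main Theorem to see that these translates span $\mathcal{K}$, and read off the induced-representation structure from the permutation of cosets. The gap sits exactly where you locate the crux: linear independence of the translates. Your first route closes with the assertion that, by irreducibility, ``$\psi_{N'_0}$ is determined by its behaviour on any single nonzero weight component''; this is not a valid principle --- a nonzero vector in an irreducible representation can have vanishing projection onto any given $\mathcal{N}$-weight space, and nothing you say rules out a nonzero element of $\mathcal{K}_{N}$ all of whose rank-$r$ weight components vanish (the $\mathcal{N}$-spectrum of $\eta(\tau)$ is not confined to rank-$r$ weights in general). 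Also, ``of maximal dimension among those occurring'' is vacuous here, since every $N'$ in play has dimension $(t-r)/2$. Your fallback route is likewise insufficient: controlling the pairwise intersections $C_{N'}\cap C_{N''}$ does not yield linear independence of more than two subspaces --- what is needed is $\mathcal{K}_{N'}\cap\sum_{N''\neq N'}\mathcal{K}_{N''}=\{0\}$ --- so the dimension count presupposes the directness it is meant to establish.

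The repair (and the paper's argument) is to apply irreducibility not to individual vectors but to the intersection $\mathcal{I}=\mathcal{K}_{N}\cap\mathrm{span}\{\mathcal{K}_{M}\}_{M\neq N}$: both spaces are invariant under $O_N\times\Sp(V)$ (as $O_N$ fixes $N$ and permutes the other isotropic spaces), and $\mathcal{K}_N\simeq\tau\otimes\eta(\tau)$ is irreducible for this product group, so $\mathcal{I}$ is either $\{0\}$ or all of $\mathcal{K}_N$; transitivity of $O(U)$ then handles the other indices. To exclude the second alternative one exhibits a single vector of $\mathcal{K}_N$ outside the span: take a rank-$r$ $\mathcal{N}$-weight vector $\Phi\in\mathcal{K}_N$ and $F\in\supp\Phi$; since $\rank F^TF=r$, the coset $[F]_N$ contains a maximal representative $F'$ with $\range F'=N^\perp$, and by the CSS coset invariance $(\delta_{F'},\Phi)\neq 0$. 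Any vector of $C_{M}$ with $M\neq N$ of the same dimension is supported on $\Hom(X\to M^\perp)$, and $N^\perp\not\subseteq M^\perp$, so its overlap with $\delta_{F'}$ vanishes; hence $\Phi\notin\mathrm{span}\{\mathcal{K}_{M}\}_{M\neq N}$. This is precisely the ``maximal branch point'' your sketch gestures at, but it must be combined with the invariant-subspace form of irreducibility rather than the per-vector statement; with that substitution your plan matches the paper's proof, including the identification of the $O_N$-action as $\tau\circ i$ and the final induced-representation bookkeeping.
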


\begin{proof}
  Set $U'=N^\perp/N$.
  By Lemma~\ref{lem:css representation} and Theorem~\ref{thm:gurevich}, there is a unique $O(U')\times \Sp(V)$-representation space $\mathcal{K}_{0}$ of type $\tau\otimes\eta(\tau)$ in $C_N$.

  The \emph{isotropic Grassmanian}
  \begin{align*}
	I_r = \{ N \;|\; N\text{ isotropic}, \dim N = (t-r)/2 \} \simeq O(U)/O_{N}
  \end{align*}
  can be identified with the cosets $O(U)/O_N$.
  Let $\{ g_i \}_{i=1}^{|I_r|}$ be a choice of representatives for each coset.
  Define
  \begin{align*}
	\mathcal{K}_{[g_i]}
	= \mu_{U\otimes V}(g_i)\,\big(\mathcal{K}_{0}\big).
  \end{align*}
  As $\Sp(V)$-representation spaces, the $\mathcal{K}_{[g_i]}$ are all equivalent to $\eta(\tau)$.
  Conversely, from Theorem~\ref{thm:main}, every $\Sp(V)$-representation of type $\eta(\tau)$ is contained in their span.
  Therefore,
  \begin{align*}
	\mathcal{K} = \mathrm{span} \{ \mathcal{K}_{[g_i]} \}_{i=1}^{|I_r|}.
  \end{align*}

  We claim that the	spaces $\mathcal{K}_{[g_i]}$ are linearly independent.

  Indeed: 
  We need to show that for each $i$, the sapce $\mathcal{K}_{[g_i]}$ intersects the span $\mathcal{K}'$ of the other spaces only at $\{0\}$.
  Since $O(U)$ acts transitively on the $\mathcal{K}_{[g_i]}$, it is enough to treat the case $i=1$.
  As
  $\mathcal{K}'$ and $\mathcal{K}_{[g_1]}$
  are
  $O_N\times \Sp(V)$
  representation spaces, and because
  $\mathcal{K}_{[g_1]}$
  is irreducible,
  we have the alternatives
  \begin{align*}
	\mathcal{K}_{[g_1]} \subset \mathcal{K}'
	\qquad
	\text{ or }
	\qquad
	\mathcal{K}_{[g_1]} \cap \mathcal{K}' = \{0 \}.
  \end{align*}
  It thus suffices to show that
  $\mathcal{K}_{[g_1]}$
  contains one vector that is not an element of $\mathcal{K}'$.
  Let $\Phi_1\in \mathcal{K}_{[g_1]}$ carry an $\mathcal{N}$-weight $B$ of rank $r$, let $F\in\supp\Phi_1$.
  There is some $F'\in [F]_N$ that is \emph{maximal} in the sense $\range F' = N^\perp$ (rather than its range being a strict subset of $N^\perp$).
   Since $\rank F^TF=r$, there must be some complement $W$ of $N$ in $N^\perp$ for which
  $W\subseteq \range F$. From the decomposition $\Hom(X\to N^\perp)=\Hom(X\to N)\oplus\Hom(X\to W)$ it is clear that there exists a $\Delta \in\Hom(X\to N)$ for which $F+\Delta=F'$ is maximal.
  By the invariance property of CSS codes, $F'\in\supp\Phi_1$, i.e.\ the inner product $(\delta_{F'}, \Phi_1)\neq 0$.
  In contrast, let $\Phi_i\in \mathcal{K}_{[g_1]}$ for $i\neq 1$.
  Then $F_i \in \supp\Phi_i \Rightarrow \range F_i \subset N_i^\perp$.
  But $\range F' = N_1^\perp \not\subset N_i^\perp$,
  so that $(\delta_{F'},\Phi_i)=0$.
  It follows that $\Phi_1\not\in\mathcal{K}'$, as claimed.

  The space $\mathcal{K}$ is therefore a direct sum of the
  $\mathcal{K}_{[g_i]}$.
  We will now compute the action of $O(U)$ on this direct sum.
  It suffices to consider vectors of the form
  \begin{align*}
	\mu_{U\otimes V}(g_i) (\phi\otimes\psi), 
  \end{align*}
  which span $\mathcal{K}$.
  For each $g\in O(U)$, there is a permutation $\pi \in S_{|I_r|}$ and elements $h_i\in O_N$ such that for each $g g_i = g_{\pi_i} h_i$.
  Thus
  \begin{align}\label{eqn:induced rep}
	\mu_{U\otimes V}(g)
	\,
	\big(
	  \mu_{U\otimes V}(g_i) (\phi\otimes\psi)
	\big)
	= \mu_{U\otimes V}(g_{\pi_i} h_i) (\phi\otimes\psi)
	= \mu_{U\otimes V}(g_{\pi_i})\,\big(\phi\otimes\tau(h_i)\psi\big).
  \end{align}
  But this is the action of the advertised induced representation.
\end{proof}

By Lemma~\ref{lem:css representation}, the space $N^\perp/N$ is an orthogonal space of dimension $r=t-2k$ and discriminant $d(U_r)=(-1)^k d(U)$, with $k=\dim N$.
In particular, up to orthogonal maps, $N^\perp/N$  only depends on $r$.
With this in mind, we suppress the dependency on $N$ in our notation, and define $U_r$ to be $N^\perp/N$ for \emph{some} isotropic $N$ of dimension $(t-r)/2$.
In the same vein, any two isotropic spaces of the same dimension have conjugate stabilizer groups,
and thus the isomorphism class of the induced representation in Eq.~\eqref{eqn:induced lemma} does not depend on $N$.
Again, this justifies defining $O_r$ to be $O_N$ for \emph{some} isotropic $N$ of dimension $(t-r)/2$.

Theorem~\ref{thm:gurevich}, Theorem~\ref{thm:main}, and the preceding lemma then yield the decomposition
\begin{align}\label{eqn:nice}
  \mu_{U\otimes V} \simeq
  \bigoplus_{r \in R(U)}\quad \bigoplus_{\tau \in\mrm{Irr}\,O(U_r)}\quad
  \mathrm{Ind}_{O_r}^{O(U)}(\tau)
  \otimes
  \eta(\tau),
\end{align}
with
\begin{align*}
  R(U) = \{ t- 2k \;|\; \text{ there is an isotropic } N\subset U \text{ with } \dim N = k \}.
\end{align*}
All $\Sp(V)$-irreps $\eta(\tau)$ appearing in Eq.~\eqref{eqn:nice} are indeed inequivalent: Those corresponding to different $O(U_r)$ are distinguished by their rank, whereas the inequivalence of summands of the same rank is a consequence of Theorem~\ref{thm:gurevich}.

As an $O(U_r)$-representation,
\begin{align*}
  \mathrm{Ind}_{O_r}^{O(U)}(\tau) \simeq \tau \otimes \CC^{|O(U)/O_N|}
\end{align*}
is just $\tau$ with degeneracy equal to the number of isotropic subspaces of dimension $k$.

A comparison with Theorem~\ref{thm:gurevich} shows that the $\Sp(V)$-representations in $\Theta(\tau)$ are exactly those $\eta(\tau')$, where $\tau$ appears in $\mathrm{Ind}_{O_N}^{O(U)}(\tau')$.
In terms of character inner products, and using Frobenius reciprocity:
\begin{align*}
  \langle \Theta(\tau), \eta(\tau') \rangle_{\Sp(V)}
  =
  \sum_{r\in R(U)} \langle \tau, \mathrm{Ind}_{O_r}^{O(U)}(\tau')\rangle_{O(U)}
  =
  \sum_{r\in R(U)} \langle \mathrm{Res}_{O_r}^{O(U)}(\tau), \tau'\rangle_{O_r}.
\end{align*}
As an example, we consider the case where $\tau=\id_{O(U)}$ is the trivial representation of $O(U)$.
Then
\begin{align*}
  \langle \mathrm{Res}_{O_r}^{O(U)}(\id_{O(U)}) , \tau' \rangle_{O_r}
  =
  \langle \id_{O_r} , \tau' \rangle_{O_r}
  =
  \delta_{\id_{O(U_r)}, \tau'}.
\end{align*}
Therefore,
\begin{align*}
  \Theta(\id_U) = \bigoplus_{r\in R(U)}
  \eta(\id_{O(U_r)}).
\end{align*}
has a number of components equal to the isotropy index of $U$.

\subsection{A non-CSS type rank-deficient subrepresentation}
\label{sec:counterexample}

Our main theorem makes statements only in the regime $t\leq n$.
Here, we show that it indeed cannot be extended to all pairs $t,n$.
To this end, we construct a rank-$0$ subrepresentation of $\mu_{\FF_p^3\otimes \FF_p}$, i.e.\ for the case of $t=3$ and $n=1$.
Here, $p$ is an arbitrary odd prime.
This is incompatible with Theorem~\ref{thm:main}, which posits that $t-r$ be even.
Thus, more general subrepresentations can occur for $t>n$.

Set $V=\FF_p\oplus \FF_p^*$ and $U=\FF_p^3$ with the standard orthogonal form $\beta$.
The oscillator representation $\mu_{U\otimes V}$ thus acts on $L^2(U\otimes \FF_p^*)\simeq L^2(U)$.

Our construction depends\footnote{Numerically, it appears that the resulting representation space is actually independent of the choice of $x_0$.
Numerical investigations also indicate that when substituting $U=\FF_p^3$ (which has discriminant $d(U)=1$) by a three-dimensional $U'$ with discriminant $d(U')$ a non-square, then $\mu_{U'\otimes V}$ will still act trivially on $\psi$ if $p=3$. On the other hand, for $p=5,7,11,13$, it holds that $\mu_{U'\otimes V}$ does not afford \emph{any} trivial representation space.
We will neither use, nor attempt to prove, these statements.}
on the choice of an isotropic vector $x_0\in U$.
Define $\psi\in L^2(U)$ by
\begin{align}
  \psi(z)
  =
  \left\{
	\begin{array}{ll}
	  0 \qquad& \beta(z,z)\neq 0 \text{ or } z=0 \\
	  \ell_{\beta(x_0, z)} & \beta(z,z)=0, z \neq \lambda x_0 \\
	  \ell_{2\lambda} & z = \lambda x_0.
	\end{array}
  \right.
  \label{eqn:noncss}
\end{align}
In particular, $\psi$ is supported on the set of isotropic vectors in $U$, and restricts to a Legendre symbol on every ray.

\begin{proposition}
  The representation $\mu_{U\otimes V}$ acts trivially on $\psi$.
\end{proposition}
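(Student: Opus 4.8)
The plan is to verify that $\psi$ is fixed by every element of $\Sp(V)\cong\mathrm{SL}_2(\FF_p)$, and for this it suffices to check a generating set — the unipotent subgroup $\mathcal N=\{N_A\}$ together with the Weyl element $w=J_B$ (for $B$ equal to multiplication by $1$), since $\mathcal N$ and $w$ generate $\mathrm{SL}_2(\FF_p)$ and every $J_B$ is $w$ times an element of $\mathcal D$. Because $X=\FF_p$ is one-dimensional we identify $\Hom(X\to U)\simeq U$, and then the explicit formulas~(\ref{eqn:N oscillator})--(\ref{eqn:J oscillator}), combined with the weight computation~(\ref{eqn:embedded weight}) (or equivalently with $\mu_{U\otimes V}\simeq\mu_V^{\otimes 3}$ from Corollary~\ref{cor:full factorization}, which applies since $d(U)=1$), describe the action of the embedded generators on $L^2(U)$: writing $A,C,B$ as multiplication by $a,c,b$, one gets $\mu_{U\otimes V}(\Id_U\otimes N_A)\delta_z=\omega\big(2^{-1}a\,\beta(z,z)\big)\delta_z$, $\mu_{U\otimes V}(\Id_U\otimes D_C)\delta_z=\ell_c\,\delta_{c^{-1}z}$, and $\mu_{U\otimes V}(\Id_U\otimes J_B)\delta_z=\gamma_b^{-1}\sum_{z'\in U}\omega\big(-2^{-1}b\,\beta(z,z')\big)\,\delta_{z'}$ with $\gamma_b=\sum_{z\in U}\omega\big(-2^{-1}b\,\beta(z,z)\big)$.

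The unipotent and diagonal generators are immediate. By construction $\psi$ is supported on the isotropic cone $\{z:\beta(z,z)=0\}$, on which the phase $\omega\big(2^{-1}a\,\beta(z,z)\big)$ equals $1$; hence $\mu_{U\otimes V}(\Id_U\otimes N_A)\psi=\psi$. For $\Id_U\otimes D_C$ one checks $\ell_c\,\psi(cz)=\psi(z)$ directly from~(\ref{eqn:noncss}): scaling preserves both the cone and the line $\FF_p x_0$, and on the two nonzero cases one uses $\ell_{2c\lambda}=\ell_c\ell_{2\lambda}$, respectively $\ell_{\beta(x_0,cz)}=\ell_c\,\ell_{\beta(x_0,z)}$, together with $\ell_c^2=1$.

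It remains to prove $\mu_{U\otimes V}(\Id_U\otimes w)\psi=\psi$, i.e.\ that $\psi$ is fixed by the Gauss-sum--normalized Fourier transform $\phi\mapsto\gamma^{-1}\widehat\phi$ attached to $\beta$, where $\widehat\phi(w):=\sum_z\phi(z)\,\omega(-2^{-1}\beta(z,w))$ and $\gamma=\gamma_1=\big(\sum_{t\in\FF_p}\omega(-2^{-1}t^2)\big)^3$ is an explicit scalar of modulus $p^{3/2}$ (computed from an orthogonal basis of $U$ in terms of the quadratic Gauss sum $g=\sum_t\ell_t\omega(t)$, $g^2=\ell_{-1}p$). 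Equivalently one must show $\widehat\psi=\gamma\psi$. To compute $\widehat\psi$, decompose the isotropic cone into its $p+1$ isotropic lines; choosing a nonzero $v_L$ on each line $L$, the definition~(\ref{eqn:noncss}) says exactly that $\psi(\mu v_L)=\ell_\mu\,c_L$ for $\mu\in\FF_p^\times$, with $c_L=\ell_{\beta(x_0,v_L)}$ when $L\neq\FF_p x_0$ and $c_{\FF_p x_0}=\ell_2$ (taking $v_{\FF_p x_0}=x_0$). Summing line by line and using $\sum_{\mu\in\FF_p^\times}\ell_\mu\,\omega(a\mu)=\ell_a\,g$ turns $\widehat\psi(w)$ into $g\,\ell_{-2}\,\Sigma(w)$ (using $\ell_{2^{-1}}=\ell_2$), where $\Sigma(w):=\sum_L c_L\,\ell_{\beta(v_L,w)}$. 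Thus the whole claim collapses to the elementary statement that $\Sigma(w)$ is a fixed scalar multiple of $\psi(w)$ for every $w\in U$.

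The main work — and the step I expect to be the principal obstacle — is the evaluation of $\Sigma(w)$, where the geometry of a $3$-dimensional nondegenerate orthogonal space over $\FF_p$ enters: its Witt index is $1$, so the span of two distinct isotropic lines is a hyperbolic plane and in particular $\beta(v_L,v_{L'})\neq0$ for $L\neq L'$; and the isotropic vectors of $x_0^\perp$ are exactly $\FF_p x_0$. Fixing a hyperbolic partner $y_0$ of $x_0$ and an anisotropic $e\perp\langle x_0,y_0\rangle$ (with $\beta(e,e)$ in the square class of $-1$, since $d(U)=1$ and $d(\mathbb{H})=-1$) renders all $p+1$ isotropic lines explicit — namely $\FF_p x_0$ together with $\FF_p\big(-2^{-1}\beta(e,e)c^2\,x_0+y_0+c\,e\big)$ for $c\in\FF_p$ — so that $\beta(v_L,w)$ can be written out and $\Sigma(w)$ reduced to a one-dimensional character sum in each of the three cases $w$ anisotropic, $w\in\FF_p x_0$, and $w$ isotropic with $w\notin\FF_p x_0$. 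Carrying out these Gauss-sum evaluations — keeping careful track of the square-class factors $\ell_{-1}$ and $\ell_2$ — and checking that the contribution of the $p$ ``generic'' lines, plus the correction coming from the special value $\ell_{2\lambda}$ on $\FF_p x_0$, reassembles into $\gamma\psi(w)$, completes the proof.
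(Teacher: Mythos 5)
Your reduction to generators and your treatment of $\mathcal N$ and $\mathcal D$ match the paper exactly, and your line-by-line rewriting of the Fourier transform, $\widehat\psi(w)=g\,\ell_{-2}\,\Sigma(w)$ with $\Sigma(w)=\sum_L c_L\,\ell_{\beta(v_L,w)}$, is a correct setup. But the proof stops precisely where the content of the proposition lies: you never evaluate $\Sigma(w)$. Both things that make the statement true are left unverified -- that $\Sigma(w)=0$ for anisotropic $w$ (so that $\widehat\psi$ is again supported on the cone), and that on the cone the sum reassembles, \emph{with the correct constant relative to the Gauss-sum normalization $\gamma$ and with the special value $\ell_{2\lambda}$ on the line $\FF_p x_0$}, into $\gamma\psi(w)$. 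You explicitly flag this as ``the principal obstacle,'' and indeed it is exactly where a wrong normalization of $\psi$ would make the argument collapse; as written, this is a genuine gap rather than a routine omission. (For what it is worth, your route does go through: for anisotropic $w$ the sum over the $p$ generic lines is a Legendre sum of a quadratic polynomial whose discriminant is $-\beta(w,w)\neq 0$, and it cancels against the contribution of the remaining line.)

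It is also worth comparing with how the paper closes these two holes, since both devices would shorten your computation considerably. First, after parametrizing the isotropic lines by $x_a=(1,2^{-1}a^2,a)$, $a\in\FF_p$, and $x_\infty=(0,2,0)$, one finds $\beta(x_a,x_b)=2^{-1}(a-b)^2$ and $\beta(x_a,x_\infty)=2$, so \emph{all} your coefficients $c_L$ equal $\ell_2$; the apparent dependence on $x_0$ in (\ref{eqn:noncss}) disappears and $\psi=\ell_2\sum_{a\in\bar\FF_p}\sum_{\lambda\in\FF_p^\times}\ell_\lambda\,e_{\lambda x_a}$, which makes the evaluation of $\Sigma$ on isotropic arguments a one-line Gauss-sum identity. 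Second, the paper avoids the anisotropic case altogether: it only checks $\tilde\psi(\kappa x_b)=\psi(\kappa x_b)$ on $\supp\psi$ and then invokes unitarity of the oscillator representation -- since $\tilde\psi$ already agrees with $\psi$ on $\supp\psi$ and has the same norm, it can have no support elsewhere. Incorporating these two observations, or else actually carrying out the three case-by-case character-sum evaluations you outline (including the constant matching against $\gamma$), is what is needed to turn your plan into a proof.
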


\begin{proof}
  From the explicit definitions in Section~\ref{sec:oscillator}, one can easily see that $\psi$ affords trivial actions by the subgroups $\mathcal{N}$ (using isotropy of the support) and $\mathcal{D}$ (using the multiplicativity of the Legendre symbol).
  All elements $J_B$ of the subgroup $\mathcal{J}$ can be written as a product of an element from $\mathcal{D}$ with $J_{\mathrm{id}}$, where $\mathrm{id}: X^* \to X$ is the canonical identification of $\FF_p^*$ with $\FF_p$.
  It therefore remains to be shown that $\psi$ is stabilized by $\mu_{U\otimes V}(J_{\mathrm{id}})$.

  We begin by deriving a more convenient expression for $\psi$.
  The standard form in $\FF_p^3$ is isomorphic to $\mathbb{H}\oplus \langle -1 \rangle$.
  In other words, there exists a basis with respect to which the standard form on $\FF_p^3$ is
  \begin{align*}
	\beta(x,y) = x_1 y_2 + x_2 y_1 - x_3y_3.
  \end{align*}
  In this basis, define
  \begin{align*}
	x_a &= (1,2^{-1}a^2,a), \qquad a \in \FF_p \\
	x_\infty &= (0,2,0).
  \end{align*}
  By enumerating all points in projective space $\FF_p^3/\FF_p$, one may easily convince 
  oneself that every isotropic vector in $\FF_p^3$ is a multiple of exactly one $x_a$, for 
  $a\in\bar\FF_p := (\FF_p \cup \infty)$.
  We can choose the basis change such that the vector $x_0$ that appers in~\eqref{eqn:noncss} 
  is mapped to the vector $x_0$ as defined here.

  For $a\neq b\in \FF_p$,
  \begin{align*}
	\beta(x_a, x_b) &= 2^{-1}(a^2+b^2)-ab = 2^{-1} (a-b)^2, \\
	\beta(x_a, x_\infty) &= 2,
  \end{align*}
  so that the Legendre symbol of the inner products is constant:
  \begin{align*}
	\ell_{\beta(x_a, x_b)}=\ell_2 \qquad \forall a\neq b \in\bar\FF_p^3.
  \end{align*}
  With these definitions, $\psi$ takes a simple form:
  \begin{align*}
	\psi&=
	\sum_{a\in\bar\FF_p}
	\sum_{\lambda \in \FF_p^\times}
	\left(
	  \ell_{\beta(x_0,x_a)}
	  +
	  \ell_2
	  \delta_{a,0}
	\right)
	\ell_{\lambda}
	e_{\lambda x_a} \\
	&=
	\ell_2
	\sum_{a\in\bar\FF_p}
	\sum_{\lambda \in \FF_p^\times}
	\ell_{\lambda}
	e_{\lambda x_a}.
  \end{align*}

  We evaluate the Fourier transform
  $\tilde\psi = \mu_{U\otimes V}(J_{\mathrm{id}})\psi$
  on an isotropic vector, using Eq.~\eqref{eqn:J oscillator}:
  For $\kappa\in\FF_p^\times, b\in\bar\FF_p$, it holds that
  \begin{align*}
	\tilde\psi(\kappa x_b)
	&=
	\gamma^{-3}
	\ell_2
	\,
	\sum_{a\in\bar\FF_p}
	\sum_{\lambda \in \FF_p^\times}
	\ell_{\lambda}
	\omega(\beta(\lambda x_a, \kappa x_b)) \\
	&=
	\gamma^{-3}
	\ell_2
	\,
	\sum_{a\in\bar\FF_p, a\neq b}
	\ell_{\kappa}
	\ell_{\beta(x_a, x_b)}
	\sum_{\lambda \in \FF_p^\times}
	\ell_{\lambda}
	\omega(\lambda) \\
	&=
	\gamma^{-3}
	p
	\ell_{\kappa}
	\,
	\sum_{\lambda \in \FF_p^\times}
	\ell_{\lambda}
	\omega(\lambda) \\
	&=
	\gamma^{-2}
	p
	\ell_{\kappa}
	=
	\ell_2
	\ell_{\kappa}
	=
	\psi(\kappa x_b),
  \end{align*}
  where we have used the standard properties of quadratic Gauss sums.
  Restricted to the support of $\psi$, this is the required eigenvalue equation.

  In particular, we have found that
  $\tilde\psi$
  coincides with $\psi$ on the support of $\psi$.
  Because the oscillator representation acts isometrically, $\tilde\psi$ must thus also have the same support as $\psi$.
\end{proof}

\section{The connection to the Clifford group}
\label{sec:clifford}

The motivation for this work was to understand the appearance of projections onto CSS codes in the commutant of tensor power representations of the \emph{Clifford group} \cite{grossnezamiwalter}.
While we have opted to state our main results for representations of the symplectic group, the two cases can sometimes be precisely linked.
This is the purpose of Proposition~\ref{prop:symplectoclifford}, which will be developed in this section.

We start by recalling the basic definitions.
In addition to the oscillator representation, the Hilbert space $L^2(X^*)$ also carries a representation $W^{(m)}$ of the \emph{Heisenberg group} $H(V)$ over $V=X\oplus X^*$.
The Heisenberg group $H(V)$ is the set $\FF_q \times V$ with group law
\begin{align*}
  (\lambda, v) \circ (\lambda', v') = (\lambda+\lambda'+2^{-1}[v,v'], v+v').
\end{align*}
For $m\in\FF_q^\times$, the \emph{Weyl representation of mass $m$} on $L^2(X^*)$ is
\begin{align}\label{eqn:weyl operators}
  W_V^{(m)}({\lambda,x\oplus y})\delta_z = \omega^{(m)}(-2^{-1} y(x)+z(x)+\lambda)\,\delta_{z+y}.
\end{align}
As is true for the oscillator representation (Sec.~\ref{sec:oscillator}),
we again have that $W_V^{(-m)}$ is the complex conjugate of $W_V^{(m)}$, and again we will omit the superscript for the mass-$1$ version.
Two Weyl representations of different mass are inequivalent~\cite{gerardin}.
The Weyl and the oscillator representations are compatible
in that
\begin{align}\label{eqn:symplectic action}
  \mu_V^{(m)}(S)\,W_V^{(m)}(\lambda, v)\,\mu_V^{(m)}(S)^{-1} = W_V^{(m)}(\lambda, Sv)
\end{align}
for all $S\in\Sp(V), v\in V$.
The semi-direct product $H(V) \rtimes \symp(V)$ with automorphism
\begin{align*}
  S\,(\lambda, v)\,S^{-1} = (\lambda, Sv)
\end{align*}
is the \emph{Jacobi group} over $V$.
By Eq.~\eqref{eqn:symplectic action}, the map
\begin{align*}
  \cliff_V^{(m)}:(\lambda, v, S) \mapsto
  W_V^{(m)}(\lambda,v) \mu_V^{(m)}(S),
\end{align*}
thus
defines a representation of the Jacobi group on $L^2(X^*)$.
The operators realizing this representation form the \emph{Clifford group}.
Because the maps $\{W_V(v)\}_{v\in V}$ form a basis in $L^2(X^*)$, Eq.~\eqref{eqn:symplectic action} determines $\mu(S)$ up to a phase factor.

As $\Sp(V)$ embeds into $\Sp(U\otimes V)$ (Sec.~\ref{sec:product spaces}), so too can one embed the Heisenberg group $H(V)$ into $H(U\otimes V)$.
However, the embedding we will use is no longer canonical, but depends on the choice of a vector $u\in U$.
Roughly, we use $u$ to lift $y\in X^*$ to $u\otimes y\in U\otimes X^*$. 
Given $u$, define
\begin{align*}
  \iota_u:
  (\lambda,v)
  \mapsto
  (\beta(u,u) \lambda, u\otimes v).
\end{align*}
This is a homomorphism:
\begin{align*}
  &\iota_u\big( (\lambda,x\oplus y) \circ (\lambda',x'\oplus y') \big) \\
  =&
    \Big( \beta(u,u) (\lambda+\lambda')  2^{-1} \beta(u,u)\big(y'(x)-y(x')\big),
    u\otimes \big((x+x')\oplus (y+y')\big) \Big) \\ 
  =&
  \iota_u (\lambda,x\oplus y) \circ \iota_u (\lambda',x'\oplus y'),
\end{align*}
from which one verifies that we have a representation
\begin{align*}
  \cliff^{(u)}_{U\otimes V}: (\lambda,v, S) \mapsto
  W_{U\otimes V}(\iota_u(\lambda,v))\,\mu_{U\otimes V}(S)
\end{align*}
of the
Jacobi group over $V$ on $L^2(\Hom(X\to U))$.
It again fulfills a factorization property, generalizing Corollaries~\ref{cor:factorization U} and \ref{cor:full factorization}.

\begin{lemma}\label{lem:jacobi factorization}
 Assume $U=U_1 \oplus U_2$ is an orthogonal direct sum and
  let $u=u_1\oplus u_2$ with $u_i \in U_i$.
  Then, under the same ismorphism as introduced in Corollary~\ref{cor:factorization U},
  \begin{align*}
	\cliff^{(u)}_{U\otimes V}
	\simeq
	\cliff^{(u_1)}_{U_1\otimes V}
	\otimes
	\cliff^{(u_2)}_{U_2\otimes V}.
  \end{align*}

  If $u=\sum_{i=1}^t f_i$ for an orthogonal basis $\{f_i\}_{i=1}^t$ as in \eqref{eqn:discriminant}, then
  \begin{align*}
	\cliff^{(u)}_{U\otimes V} \simeq
	\underbrace{\cliff_V\otimes\dots\otimes\cliff_V}_{(t-1)\,\times}\otimes \cliff_V^{(d(U))}.
  \end{align*}
  In particular, if $U=\FF_q^t$ and $f_i$ is the standard orthonormal basis, then
  \begin{align*}
	\cliff^{(u)}_{\FF_q^t\otimes V}
	\simeq
	\cliff_V^{\otimes t}.
  \end{align*}
\end{lemma}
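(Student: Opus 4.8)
The plan is to bootstrap from the oscillator factorizations already established in Corollaries~\ref{cor:factorization U} and~\ref{cor:full factorization}, adding to them an analogous---but simpler---factorization of the Weyl part of $\cliff^{(u)}_{U\otimes V}$. The starting point is a group identity in the Heisenberg group. Suppose $U = U_1\oplus U_2$ is an orthogonal direct sum and $u = u_1\oplus u_2$ with $u_i\in U_i$. Then $\beta(u,u) = \beta(u_1,u_1)+\beta(u_2,u_2)$, one has $u\otimes v = (u_1\otimes v)\oplus(u_2\otimes v)$ under $U\otimes V = (U_1\otimes V)\oplus(U_2\otimes V)$, and by Eq.~\eqref{eqn:pairing} the symplectic form of $U\otimes V$ pairs $U_1\otimes V$ with $U_2\otimes V$ to zero (it is proportional to $\beta(u_1,u_2)=0$). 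A direct computation from the Heisenberg group law then gives
\begin{align*}
  \iota_u(\lambda,v) = \iota_{u_1}(\lambda,v)\circ\iota_{u_2}(\lambda,v)\qquad\text{in }H(U\otimes V),
\end{align*}
where the two factors on the right lie in the commuting subgroups $H(U_1\otimes V)$ and $H(U_2\otimes V)$. Since $W_{U\otimes V}$ is a homomorphism, $W_{U\otimes V}(\iota_u(\lambda,v)) = W_{U\otimes V}(\iota_{u_1}(\lambda,v))\,W_{U\otimes V}(\iota_{u_2}(\lambda,v))$.

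The second ingredient is that the isomorphism of Corollary~\ref{cor:factorization U}, $\delta_F\mapsto\delta_{\pi_1 F}\otimes\delta_{\pi_2 F}$ (Eq.~\eqref{eqn:orthogonal isomorphism}), intertwines not only the oscillator but also the Weyl representation: for $w\in H(U_i\otimes V)$ it carries $W_{U\otimes V}(w)$ to $W_{U_i\otimes V}(w)$ acting on the $i$-th tensor factor. This is the Weyl-representation analogue of Corollary~\ref{cor:factorization U} (the well-known factorization attached to an orthogonal direct sum of symplectic spaces), and it can be read off directly from Eq.~\eqref{eqn:weyl operators}: a Weyl shift-and-phase supported on $U_1\otimes V$ only moves and re-phases the $\pi_1 F$-component of $F$ and acts as the identity on $\pi_2 F$. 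Combining this with the two displays above and with Corollary~\ref{cor:factorization U} for the $\mu(S)$ part, the image of $\cliff^{(u)}_{U\otimes V}(\lambda,v,S) = W_{U\otimes V}(\iota_u(\lambda,v))\,\mu_{U\otimes V}(S)$ under the isomorphism is
\begin{align*}
  \big(W_{U_1\otimes V}(\iota_{u_1}(\lambda,v))\,\mu_{U_1\otimes V}(S)\big)\otimes\big(W_{U_2\otimes V}(\iota_{u_2}(\lambda,v))\,\mu_{U_2\otimes V}(S)\big)
  = \cliff^{(u_1)}_{U_1\otimes V}(\lambda,v,S)\otimes\cliff^{(u_2)}_{U_2\otimes V}(\lambda,v,S),
\end{align*}
proving the first assertion.

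For the second assertion I would iterate the first over the orthogonal decomposition $U = \bigoplus_{i=1}^t \FF_q f_i$, $u = \sum_{i=1}^t f_i$, exactly as in the proof of Corollary~\ref{cor:full factorization}, obtaining $\cliff^{(u)}_{U\otimes V}\simeq\bigotimes_{i=1}^t\cliff^{(f_i)}_{\FF_q f_i\otimes V}$, and then identify each one-dimensional factor. The identification $\Hom(X\to\FF_q f_i)\simeq X^*$, $f_i\otimes y\mapsto y$ used there, under which the inherited symplectic form of $\FF_q f_i\otimes V$ is $\beta(f_i,f_i)=d_i$ times the standard one on $V$, sends $\mu_{\FF_q f_i\otimes V}$ to $\mu_V^{(d_i)}$ (this is Corollary~\ref{cor:full factorization}); the same identification sends the Weyl element $\iota_{f_i}(\lambda,v)=(d_i\lambda,\,f_i\otimes v)$ to the mass-$d_i$ Weyl operator, so that $\cliff^{(f_i)}_{\FF_q f_i\otimes V}\simeq\cliff_V^{(d_i)}$. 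The normalizing operator $i_3 = \mu_V(D_C)$ appearing in the proof of Corollary~\ref{cor:full factorization} is a conjugation by a unitary and, exactly as for the oscillator part, its net effect on the last factor is only an overall phase; I would check this explicitly. With the choice of Eq.~\eqref{eqn:discriminant}, $d_i=1$ for $i<t$ and $d_t\in d(U)$, which yields $\cliff^{(u)}_{U\otimes V}\simeq\cliff_V\otimes\cdots\otimes\cliff_V\otimes\cliff_V^{(d(U))}$ with $(t-1)$ unit factors. The third assertion is the special case in which $\{f_i\}$ is the orthonormal standard basis of $\FF_q^t$, so that $d_i=1$ for every $i$.

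The only genuinely delicate point is the bookkeeping of masses and normalizations in the iteration: confirming that the single change of basis of Corollary~\ref{cor:factorization U} simultaneously factorizes the Weyl operators, and that rescaling a one-dimensional orthogonal factor by $d_i$ replaces $\cliff_V$ by $\cliff_V^{(d_i)}$ with the auxiliary operator $i_3$ contributing at most a phase. Everything else is a formal consequence of the oscillator factorizations already in hand.
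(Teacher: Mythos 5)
Your proposal is correct and follows essentially the same route as the paper: the paper verifies directly on basis vectors $\delta_F$ that the Corollary~\ref{cor:factorization U} isomorphism also factorizes $W(\iota_u(\lambda,v))$, which is exactly your Heisenberg-group identity $\iota_u(\lambda,v)=\iota_{u_1}(\lambda,v)\circ\iota_{u_2}(\lambda,v)$ combined with the Weyl factorization of Lemma~\ref{lem:factorization general}, and the paper likewise dismisses the second part as ``analogous to Corollary~\ref{cor:full factorization}'', which your per-factor mass bookkeeping ($\iota_{f_i}(\lambda,v)\mapsto$ mass-$d_i$ Weyl operator) fills in correctly. One small correction to the point you flagged: conjugation by $i_3=\mu_V(D_C)$ is not merely an overall phase—it relabels the Weyl operators as $W(\lambda,v)\mapsto W(\lambda,D_C v)$—but since $D_C$ lies in the Jacobi group this is an inner automorphism and does not change the equivalence class, and in fact you can omit $i_3$ altogether because your identification already yields $\cliff_V^{(d_i)}$ on each factor.
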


\begin{proof}[Proof (of Lemma \ref{lem:jacobi factorization})]
  The symplectic subgroup of the Clifford group factorizes according to Corollary~\ref{cor:factorization U}.
  It remains to be shown that the same is true for the image of $W(\iota_u(\lambda, v))$ under the isomorphism \eqref{eqn:orthogonal isomorphism}.
  Using $\beta(u,u)=\beta(u_1,u_1)+\beta(u_2,u_2)$:
  \begin{alignat*}{2}
	& &&W(\iota_{u_1\oplus u_2}(\lambda, x\oplus y)\,\delta_F \\
    \simeq& &&
	\left(
	  \omega(-2^{-1}y(x)\beta(u_1,u_1)+\lambda \beta(u_1,u_1)+\beta(u_1,\pi_1 Fx) ) \delta_{\pi_1 F+u_1\otimes y}
	\right)\\
	&\otimes&&\left(
	  \omega(-2^{-1}y(x)\beta(u_2,u_2)+\lambda \beta(u_2,u_2)+\beta(u_2,\pi_2 Fx) ) \delta_{\pi_2 F+u_2\otimes y}
	\right) \\
	=&
	 && W(\iota_{u_1}(\lambda,x\oplus y))\otimes W(\iota_{u_2}(\lambda,x\oplus y))\delta_{\pi_1 F}\otimes\delta_{\pi_2 F}.
\end{alignat*}

  The second part is proven analogously to Corollary~\ref{cor:full factorization}.
\end{proof}

The lemma gives a correspondence between the tensor powers of the symplectic group and the tensor powers of the Clifford group.
Assume that $t$ is not a multiple of $p$.
Let $f_i$ be the standard orthonormal basis of $\FF_q$.
Then $u=\sum_{i=1}^t f_i$ is not isotropic, so we can decompose
\begin{align*}
  \FF_q^t = \langle u \rangle \oplus u^\perp=:U_1 \oplus U_2,
  \qquad
  d(U_1) = d(U_2) = t.
\end{align*}
Then Lemma~\ref{lem:jacobi factorization} gives
\begin{align}\label{eqn:symplectoclifford}
  (\cliff_V)^{\otimes t}
  \simeq
  \cliff^{(u)}_{\FF_q^t\otimes V}
  =
  \cliff_V^{(t)} \otimes \mu_{U_2\otimes V}
  \simeq
  \cliff_V^{(t)}
  \otimes
  \mu_V^{\otimes (t-2)}% t-2 tensor factors
  \otimes
  \mu_V^{(t)},% mass t
\end{align}
where we have used that $u_2=0$ and that $W_{U_2\otimes V}(\iota_0(\lambda, v))=W_{U_2\otimes V}(0)=\Id$.
In Eq.~\eqref{eqn:symplectoclifford}, the action of the Heisenberg group has been compressed to the first tensor factor.
This yields:

\begin{proposition}\label{prop:symplectoclifford}
  Let $u \in U$ be non-isotropic, let $U'=u^\perp$.
  There is a one-one correspondence between
  \begin{enumerate}
	\item
	  representation spaces of the symplectic group acting via $\mu_{U' \otimes V}$ on $L^2(\Hom(X \to U'))$, and
	\item
	  representation spaces of the Jacobi group acting via $\cliff^{(u)}_{U\otimes V}$ on $L^2(\Hom(X\to U))$.
  \end{enumerate}

  In particular, if $p$ does not divide $t$, there is a one-one correspondence between irreducible subrepresentations of
  $(\mu_V)^{\otimes (t-2)}\otimes \mu_V^{(t)}$
  and irreducible subrepresentations of
  $\cliff^{\otimes t}$.
\end{proposition}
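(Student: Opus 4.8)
The plan is to reduce everything to Lemma~\ref{lem:jacobi factorization} together with the irreducibility of the Weyl representation of the Heisenberg group. Since $u$ is non-isotropic, $U=\langle u\rangle\oplus u^\perp=:U_1\oplus U_2$ is an orthogonal direct sum, with $u=u\oplus 0$, $u\in U_1$, $0\in U_2=U'$. The first part of Lemma~\ref{lem:jacobi factorization} then gives
\begin{align*}
  \cliff^{(u)}_{U\otimes V}\simeq\cliff^{(u)}_{U_1\otimes V}\otimes\cliff^{(0)}_{U_2\otimes V}
\end{align*}
on $\hilb_1\otimes\hilb_2$, where $\hilb_1=L^2(\Hom(X\to U_1))$ and $\hilb_2=L^2(\Hom(X\to U'))$. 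In the second factor, $\iota_0$ is the trivial homomorphism $(\lambda,v)\mapsto(0,0)$, so $W_{U_2\otimes V}(\iota_0(\lambda,v))=\Id$ and $\cliff^{(0)}_{U_2\otimes V}$ is just $\mu_{U'\otimes V}$, with the Heisenberg group acting trivially. In the first factor, $\iota_u$ is a group isomorphism $H(V)\to H(U_1\otimes V)$ --- it is a homomorphism, and it is bijective because $\beta(u,u)\neq 0$ --- so the restriction of $\cliff^{(u)}_{U_1\otimes V}$ to the Heisenberg subgroup equals $W_{U_1\otimes V}\circ\iota_u$, whose operators are precisely the Weyl operators over $U_1\otimes V$ and therefore span $\End(\hilb_1)$; hence this restriction is irreducible.

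Write $\sigma:=\cliff^{(u)}_{U_1\otimes V}$. We have arranged that $\cliff^{(u)}_{U\otimes V}\simeq\sigma\otimes\mu_{U'\otimes V}$ on $\hilb_1\otimes\hilb_2$, where the Heisenberg subgroup $H(V)$ acts as $\sigma|_{H(V)}\otimes\Id_{\hilb_2}$ with $\sigma|_{H(V)}$ irreducible, and $\Sp(V)$ acts as $\sigma|_{\Sp(V)}\otimes\mu_{U'\otimes V}$ with every $\sigma(S)$ invertible. The representation is unitary, so by Schur's lemma the commutant of $\{\sigma(h)\otimes\Id:h\in H(V)\}$ is $\CC\,\Id_{\hilb_1}\otimes\End(\hilb_2)$; hence the orthogonal projection onto any $H(V)$-invariant subspace has the form $\Id_{\hilb_1}\otimes Q$, i.e.\ every such subspace is $\hilb_1\otimes W$ for a unique $W\subseteq\hilb_2$. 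If $\mathcal{K}=\hilb_1\otimes W$, then since each $\sigma(S)$ is invertible, $(\sigma(S)\otimes\mu_{U'\otimes V}(S))(\hilb_1\otimes W)=\hilb_1\otimes\mu_{U'\otimes V}(S)W$, so $\mathcal{K}$ is $\Sp(V)$-invariant iff $\mu_{U'\otimes V}(S)W\subseteq W$ for all $S$, i.e.\ iff $W$ is a $\mu_{U'\otimes V}$-subrepresentation. Therefore $\mathcal{K}\mapsto W$ is a bijection between the subrepresentations of $\cliff^{(u)}_{U\otimes V}$ and those of $\mu_{U'\otimes V}$, with inverse $W\mapsto\hilb_1\otimes W$; it preserves inclusions, hence carries irreducibles to irreducibles, and tensoring a $\mu_{U'\otimes V}$-intertwiner with $\Id_{\hilb_1}$ shows it also respects isomorphism classes.

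For the final claim, take $U=\FF_q^t$ with the standard form and $u=\sum_{i=1}^t f_i$ for the standard orthonormal basis $\{f_i\}$. If $p\nmid t$ then $\beta(u,u)=t\neq 0$, so $u$ is non-isotropic; the last part of Lemma~\ref{lem:jacobi factorization} gives $\cliff^{(u)}_{\FF_q^t\otimes V}\simeq\cliff_V^{\otimes t}$, while $U'=u^\perp$ has dimension $t-1$ and, by multiplicativity of the discriminant (with $d(\langle u\rangle)=t$ and $d(\FF_q^t)=1$), discriminant $d(U')=t$; hence $\mu_{U'\otimes V}\simeq\mu_V^{\otimes(t-2)}\otimes\mu_V^{(t)}$ by Corollary~\ref{cor:full factorization}. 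Specializing the bijection above then finishes the proof.

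I expect the only delicate point to be the bookkeeping of the first paragraph --- identifying $\cliff^{(0)}_{U_2\otimes V}$ with $\mu_{U'\otimes V}$ and establishing the irreducibility of the Heisenberg action on $\hilb_1$ --- since, once the representation is written as $\sigma\otimes\mu_{U'\otimes V}$ with $\sigma|_{H(V)}$ irreducible, the rest is the standard fact that the $\sigma|_{H(V)}\otimes\Id$-invariant subspaces are exactly the $\hilb_1\otimes W$, combined with the invertibility of each $\sigma(S)$.
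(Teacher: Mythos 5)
Your proposal is correct and follows essentially the same route as the paper: split $U=\langle u\rangle\oplus u^\perp$, use the Jacobi-group factorization to write $\cliff^{(u)}_{U\otimes V}\simeq\sigma\otimes\mu_{U'\otimes V}$ with trivial Heisenberg action on the second factor, and use irreducibility of the Heisenberg action on the first factor to force every invariant subspace into the form $\hilb_1\otimes W$. Your explicit Schur's-lemma bookkeeping and the verification that $\iota_u$ surjects onto $H(U_1\otimes V)$ merely spell out what the paper compresses into its one-line appeal to the irreducibility of the Weyl representation.
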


\begin{proof}
  As $u$ is non-isotropic, we have the orthogonal direct sum
  \begin{align*}
	U = \left( \FF_q\,u \right) \oplus U'.
  \end{align*}
  As in the proof of Corollary~\ref{cor:full factorization},
  the ismorphism
  \begin{align*}
	i: L^2(\Hom(X\to U)) \to L^2(X^*)\otimes L^2(\Hom(X \to U')
  \end{align*}
  defined by
  \begin{align*}
	\delta_F \mapsto \delta_{u^T F}\otimes\delta_{\pi_2 F},
  \end{align*}
  realizes
  \begin{align}\label{eqn:symplectoclifford general}
	\cliff_{U\otimes V}
	\simeq
	\cliff_V^{(t)} \otimes \mu_{U'\otimes V}.
  \end{align}

  In the one direction, let $\mathcal{K} \subset L^2(\Hom(X \to U'))$ be invariant under $\mu_{U'\otimes V}$.
  Then
  \begin{align}\label{eqn:symplectoclifford correspondence}
	\mathcal{K}':= L^2(X^*)\otimes \mathcal{K} 
  \end{align}
  is invariant under
  $\cliff_V^{(t)} \otimes \mu_{U'\otimes V}$.
  In the other direction, let
  \begin{align*}
	\mathcal{K}' \subset L^2(X^*)\otimes L^2(\Hom(X \to U'))
  \end{align*}
  be invariant under
  $\cliff_V^{(t)} \otimes \mu_{U'\otimes V}$.
  Then, because the Weyl representation acting on the first tensor factor is irreducible, $\mathcal{K}'$ must factorize as
  \begin{align*}
	\mathcal{K}'=L^2(X^*)\otimes \mathcal{K}
  \end{align*}
  with a suitable $\mathcal{K} \subset L^2(\Hom(X \to U'))$ invariant under $\mu_{U'\otimes V}$.
  Thus Eq.~\eqref{eqn:symplectoclifford correspondence} defines a one-one correspondence $\mathcal{K}\to\mathcal{K}'$ as advertised.

  For the second part, assume that $p$ does not divide $t$.
  Let $U=\FF_q^t$ with standard basis $\{f_i\}_i$, and set $u = \sum_{i=1}^t f_i$.
  Then $\beta(u,u)=t$, which is non-zero by assumption.
  The claim now follows from the first part and Lemma~\ref{lem:jacobi factorization}.
\end{proof}

If $t$ is a multiple of $p$, the situation is more complicated. In that case, $u=\sum_if_i$
is isotropic, which reflects the fact that in this case the representation 
\begin{align*}
  (\lambda,x\oplus y)\mapsto W_V^{\otimes t}(\lambda,x\oplus y)
\end{align*}
is Abelian. The smallest non-degenerate subspace $U_1\subset \FF_q^t$ containing 
$u=\sum_i f_i$ is then a hyperbolic plane.
Following the same recipe as above, we can therefore arrange for the Heisenberg group to 
act only on the first \emph{two} copies of $L^2(X^*)$. 
However, the action of the Clifford group on these two copies is its adjoint action (as in
Lemma~\ref{lem:hyperbolic plane}). This action,
unlike the case treated in Proposition~\ref{prop:symplectoclifford}, is reducible.
We will analyze this situation elsewhere \cite{in-prep}.

We close this section with a sample application of Proposition~\ref{prop:symplectoclifford}.
Our goal is to directly see the equivalence of two well-known facts:
(1) The Clifford group forms a unitary $2$-design \cite{dankert_exact_2009,  gross_evenly_2007}, i.e.\ its second tensor power decomposes into a direct sum of two irreducible representations, supported on the $q^n (q^n+1)/2$-dimensional symmetric subspace, and the $q^n(q^n-1)/2$-dimensional anti-symmetric one.
Here, the (anti-)symmetry is w.r.t.\ to an exchange of tensor factors.
(2) The Weil representation of the symplectic group decomposes as the direct sum of two irreducible spaces, namely the $(q^n+1)/2$-dimensional subspace of $L^2(X^*)$ of functions that are symmetric under the reflection $y \mapsto - y$, and the $(q^n-1)/2$-dimensional subspace of anti-symmetric functions.
The correspondence in Proposition~\ref{prop:symplectoclifford} maps these two decompositions onto each other:
\begin{align*}
    \{
	  \text{(anti-)symm. tensors on }&L^2(X^*)\otimes  L^2(X^*)
	\}\\
	\updownarrow&\\
	L^2(X^*)\otimes
    \{
  	  \text{(anti-)symm. }&\text{functions on }L^2(X^*)
	\}.
\end{align*}
As a consistency check: the ortho-complement of $u = f_1 + f_2$ is spanned by $v=f_1 - f_2$.
The interchange of tensor factors acts trivially on $u$, but changes the sign of $v$.
Thus, the two notions of (anti-)symmetry are indeed mapped onto each other.

\section{Summary and Outlook}

Reference~\cite{gh17} introduced a notion of rank for $\Sp(V)$-representations, and showed that there is a one-one correspondence between irreps of $O(U)$ and highest-rank $\Sp(V)$-irreps in $\mu_{U\otimes V}$.
Here, we have classified the rank-deficient components and have achieved a decomposition of $\mu_{U\otimes V}$ in terms of irreducible and inequivalent $\Sp(V)$-representations.

A number of natural directions deserve further attention.
Most importantly from the point of view of quantum information theory, one must treat the case of characteristic $2$.
We will pursue this in an upcoming paper, which will also be written in a language better-suited for consumption by physicists \cite{in-prep}.
While we have occassionally remarked on connections between this paper and previous works from quantum information (e.g.\ Ref.~\cite{grossnezamiwalter}) and coding theory (e.g.\ Ref.~\cite{nebe-book}), the relation between their respective approaches and the one taken in this paper should be made more explicit.
Lastly, the joint action of $O(U)\times \Sp(V)$ should  be worked out more explicitly.

\appendix

\section{Deferred Proofs}

\subsection{Factorization property of the oscillator representation}
\label{app:factorization proof}

It is possible to prove Lemma~\ref{lem:factorization general} by directly verifying the claim on a set of generators (as in Eqs.~\eqref{eqn:N}, \eqref{eqn:J}, and \eqref{eqn:D}) for $\Sp(V_1)\times \Sp(V_2)$.
Our approach is based on realizing that it suffices to check the factorization property for Weyl operators (as in Eq.~\eqref{eqn:weyl operators}), and then use Eq.~\eqref{eqn:symplectic action} to ``lift'' it to the oscillator representation.

\begin{proof}[Proof of Lemma~\ref{lem:factorization general}]
  Assume that
  $X=X_1\oplus X_2$
  and that
  $V=V_1\oplus V_2$
  is the resulting decomposition of $V$.
  By computing the action on basis vectors, it is immediate that
  \begin{align*}
	  i \, W_V(v) \, i^{-1} =  W_{V_1}(v_1)\otimes W_{V_2}(v_2),
  \end{align*}
  where
  \begin{align*}
	i: L^2(X^*) \to L^2(X^*_1) \otimes L^2(X^*_2),
	\qquad
	\delta_y \mapsto \delta_{y \pi_1}\otimes\delta_{y\pi_2}
  \end{align*}
  is the isomoprhism introduced in Eq.~\eqref{eqn:isomorphism factorization}.

  Let $S\in\Sp(V_1)$, then, using Eq.~\eqref{eqn:symplectic action},
  \begin{align*}
	&\big(i \, \mu_V(S) \, i^{-1}\big)
	\,
	\big(i \, W(v) \, i^{-1}\big)
	\,
	\big(i \, \mu_V(S)^\dagger\,i^{-1}\big) \\
  =&
	i \, W(Sv)\, i^{-1}\\
		=& 
		W_{V_1}(Sv_1)\otimes W_{V_2}(v_2)\\
		=& 
		\Big(\mu_{V_1}(S_1)\otimes \ii\Big) \Big( W_{V_1}(v_1)\otimes W_{V_2}(v_2)\Big) \Big(\mu_{V_1}(S_1)\otimes \ii\Bigl)^\dagger.
  \end{align*}
  Since the Weyl operators $W_V(v)$ for a basis for $\End(L^2(X^*))$, this implies
  \begin{align*}
	  i \,\mu_V(S_1)\, i^{-1} = \kappa(S) (\mu_{V_1}(S)\otimes \ii)
  \end{align*}
  for some scalar function $\kappa:\Sp(V_1)\to\CC^\times$. We now show $\kappa(S)=1$ for all $S$.

  Unitarity implies that $|\kappa(S)|=1$. Because
  \begin{align*}
	  i\,\mu_V|_{\Sp(V_1)}\, i^{-1}
  \end{align*}
  is a representation,
  $\kappa$ must also be a (one dimensional) representation of $\Sp(V_1)$. Let $\mcal{N}_1\rtimes\mcal{D}_1$ be
  the Siegel parabolic of $\Sp(V_1)$ with $\mcal{N}_1$ its unipotent radical.

  Since $\kappa$ is a one dimensional representation, it must contain only one weight associated
  to $\mcal{N}_1$. This $\mathcal{N}_1$-weight must have a trivial orbit under conjugation by $\mcal{D}_1\cong\Gl(X_1)$
  transformations, and thus $\kappa|_{\mcal{N}_1}=1$. But $\Sp(V_1)$ is generated by $\mcal{N}_1$-conjugates,
  so $\kappa = 1$.
\end{proof}

\subsection{Fourier transforms and invariance}
\label{app:ft invariance}

Here, we prove Lemma~\ref{lem:ft invariance lemma}.

We begin by noting that $B$ turns $\Hom(X\to U)$ into an orthogonal space with form $\beta_B$ given by
\begin{align*}
	\beta_B(F,G) = \tr F^t \beta G B.
\end{align*}
Let $W=\Hom(X\to U')$.
In the following we will show that
\begin{enumerate}
\item for any $\Phi\in L^2\Hom(X\to U)$, $\supp\Phi\subseteq W$ if and only if $\mu_{U\otimes V}(J_B)\Phi$ is invariant under $W^\perp$ translations,
\item $W^\perp = \Hom(X\to U'^\perp)$.
\end{enumerate}
These two claims imply the first statement of the lemma

For the first claim, start with the ``only if'' direction.
Apply the inverse map (associated with $-B$) to a function $\tilde\Phi$ with the invariance stated.
Then
\begin{align*}
\Phi(F)
&=
\gamma(B,U)
\sum_{F'} \omega( -\beta_B(F, F'))\tilde\Phi(F') \\
&=
\gamma(B,U)
\sum_{C \in \Hom(X\to U) / W^\perp}
  \tilde\Phi(C)
\sum_{G \in W^\perp }
\omega( -\beta_B(F, C+G)) \\
&=
\gamma(B,U)
\sum_{C \in \Hom(X\to U) / W^\perp}
  \tilde\Phi(C)
\omega( -\beta_B(F, C))
\sum_{G \in W^\perp }
\omega( -\beta_B(F, G)) \\
&=
\gamma(B,U)
|W^\perp| \delta_{W}(F)\,
\sum_{C \in \Hom(X\to U) / W^\perp}
  \tilde\Phi(C)
\omega( -\beta_B(F, C)).
\end{align*}

Conversely, the set of $\Phi$'s with support in $W$ is a vector space of dimension $|W|$.
At the same time, the set of solutions we have identified in the direct direction has dimension
\begin{align*}
\left| \Hom(X\to U) / W^\perp \right|
=
\frac{| \Hom(X\to U)|}{|W^\perp|}
=
\frac{| \Hom(X\to U)|\,|W|}{|\Hom(X\to U)|}
=|W|,
\end{align*}
so we have found all solutions.

Now we prove the second claim. Assume $F$ is such that $\beta_{B}(F,F')=0$ for all $F'\in\Hom(X \to U')$,
and choose $F'=u\otimes y$ for $y\in X^*$ and $u\in U'$.
Note that
\begin{align*}
	F^t\beta F' B = F^t\big(\beta(u)\big)\otimes (By),
\end{align*}
where we used the fact that $B$ is symmetric. Hence
\begin{align*}
	\beta_B(F,F')=\big(F^t\beta(u)\big)(By) = \beta(u, FB y).
\end{align*}
Because $y\in X^*$ and $u\in U'$ are arbitrary, and because $B$ is surjective, it follows that $F\in \Hom(X\to U'^\perp)$
and with this the claim also follows.

Now on to the second statement of the lemma.
Acting explicitly on the indicator function of $W$ we get
\begin{align*}
	\mu_{U\otimes V}(J_B)\sum_{F\in W}\delta_F	&= \gamma(B)^{-1}\sum_{F\in W}\sum_{F'\in W^\perp} \omega^{\beta_B(F,F')}\delta_{F'},
\end{align*}
where we used the first statement to restrict the sum over $F'$. Notice that by the definition of $W^\perp$, every coefficient in the
expression above is 1, so that
\begin{align*}
	\mu_{U\otimes V}(J_B)\sum_{F\in W}\delta_F = \gamma(B)^{-1}|W| \sum_{F'\in W^\perp}\delta_{F'},
\end{align*}
as claimed.

\subsection{Contiguity of ranks}
\label{sec:rank spectrum proof}

In this section we prove Proposition~\ref{prop:rank spectrum}.
Our strategy will be to find a set of generators for $\Sp(V)$ which consists of elements that either keep the rank of an $\mathcal{N}$-weight invariant, or change it by at most one.
It then follows that if the ranks of the $\mathcal{N}$-spectrum has a gap, then the representation is reducible.

Recall that $\rep$ is an irreducible representation of rank $<t$, and
\begin{align*}
	R:=\{ \,k\,|\,\text{there is an } \mcal{N}\text{-weight with rank } k\}.
\end{align*}

Let $\rep^r$ be the subspace of $\mathcal{K}\subset L^2(\Hom(X\to U))$ that is spanned by $\mcal{N}$-weights of  rank $r$.
This space is invariant under the action of the parabolic subgroup $\mcal{N}\rtimes\mcal{J}$.
We will analyze its image under the Fourier transforms $\mcal{J}$.

Let $\{e_i\}$ be an arbitrary basis of $X$ and $\eps_i$ be its dual.
For any isomorphism $B: X^*\to X$,
there is a $C\in \Gl(X)$ satisfying
\begin{align*}
	C B \eps_i = e_i, \quad \forall\, i.
\end{align*}
Using the isomorphism in Eq.~\eqref{eqn:isomorphism factorization},
we find that there exist a set of $B_i: \langle \eps_i\rangle \to \langle e_i\rangle$ for which
\begin{align*}
	i\, \mu_{U\otimes V}(C)\mu_{U\otimes V}(J_B)\, i^{-1} 
  = 
  \mu_{U\otimes V_1}(J_{B_1})\otimes \cdots \otimes \mu_{U\otimes V_n}(J_{B_n}),
\end{align*}
where $V_i := \langle\eps_i,e_i\rangle$. It follows that $\Sp(V)$ is generated by the parabolic subgroup
together with any
$i^{-1}\, (\mu_{U\otimes V_1}(J_{B_1})\otimes \ii) \, i$
(sometimes referred to as \emph{single-system Fourier transform} in quantum information theory).
Let $X_1=\langle e_1\rangle$, $X_2=\langle e_2,\,...\,,\,e_n\rangle$, and let $V=V_1\oplus V_2$ be the corresponding decomposition of phase space.
Let $\pi_1$, $\pi_2$ be the projections associated with the decomposition $\Hom(X\to U)=\Hom(X_1\to U)\oplus\Hom(X_2\to U)$.
We see that
\begin{align*}
	\big(i^{-1}\, (\mu_{U\otimes V_1}(J_{B_1})\otimes \ii) \, i\big)\delta_F 
  = 
  \gamma^{-1}(B_1) \sum_{F' \in \Hom(X_1\to U)} \omega^{\beta_B(F',\pi_1 F)}\delta_{\pi_2 F + F'}.
\end{align*}
Throughout the rest of the argument, let $\rank F^TF =r$.

Now, $\range\,\pi_2 F$ is either equal to $\range F$ or it is a subspace of the latter with co-dimension 1. Thus
$\rank (\pi_2 F)^T(\pi_2 F) \in\{r, r -1\}$. Furthermore, either $\range\,\pi_2 F + F' = \range\,\pi_2 F$ or
\begin{align*}
	\range\,\pi_2 F \subset \range\,\pi_2 F + F'
\end{align*}
is a subspace of co-dimension 1. Thus,
\begin{align*}
	\rank (\pi_2 F + F')^T(\pi_2 F + F') \in \{r-1,\, r,\, r+1\},
\end{align*}
for any $F'\in\Hom(X_1\to U)$. This implies that
\begin{align*}
	i^{-1}\, (\mu_{U\otimes V_1}(J_{B_1})\otimes \ii) \, i : \rep^r \to \rep^{r-1}+\rep^r+\rep^{r+1}.
\end{align*}

If for some $r\in R$ it
held that $\mcal{K}^l=\{0\}$ then the spaces $\sum_{r>l}\rep^r$ and $\sum_{r<l}\rep^r$ would be invariant under all
generators (and thus subrepresentations). Since $\rep$ is irreducible by assumptiuon, this cannot happen.

\bibliography{MontealegreMora-Gross-rank}

\end{document}